\newcommand{\opLi}{\operatorname{Li}}
\newcommand{\opIm}{\operatorname{Im}}
\newcommand{\opNeu}{\operatorname{Neu}}
\newcommand{\opDir}{\operatorname{Dir}}
\newcommand{\oploc}{\operatorname{loc}}
\newcommand{\opRange}{\operatorname{Range}}
\newcommand{\opsupp}{\operatorname{supp}}
\newcommand{\opRe}{\operatorname{Re}}
\newcommand{\opspec}{\operatorname{spec}}
\newcommand\numberthis{\addtocounter{equation}{1}\tag{\theequation}}
\newtheorem{Theorem}{Theorem}
\newtheorem{Proposition}[Theorem]{Proposition}
\newtheorem{Remark}[Theorem]{Remark}
\newtheorem{Definition}[Theorem]{Definition}
\newtheorem{Lemma}[Theorem]{Lemma}
\newtheorem{Corollary}[Theorem]{Corollary}
\title{Exact output tracking for the one-dimensional heat equation and applications to the interpolation problem in Gevrey classes of order 2}
\author{Lucas Davron\thanks{CEREMADE, Universit\'e Paris-Dauphine \& CNRS UMR 7534, Universit\'e PSL, 75016 Paris, France ({davron@ceremade.dauphine.fr)}}\quad and Pierre Lissy\thanks{CERMICS, Ecole des Ponts, IP Paris, Marne-la-Vall\'ee, France ({pierre.lissy@enpc.fr})}}
\begin{document}
\maketitle
\paragraph*{Abstract}
This paper provides a complete characterization of the Dirichlet boundary outputs that can be exactly tracked in the one-dimensional heat equation with Neumann boundary control. The problem consists in describing the set of boundary traces generated by square-integrable controls over a finite or infinite time horizon. We show that these outputs form a precise functional space related to Gevrey regularity of order~2. In the infinite-time case, the trackable outputs are precisely those functions whose successive derivatives satisfy a weighted summability condition, which corresponds to specific Gevrey classes. For finite-time horizons, an additional compatibility condition involving the reachable space of the system provides a full characterization. The analysis relies on Fourier-Laplace transform, properties of Hardy spaces, the flatness method, and a new Plancherel-type theorem for Hilbert spaces of Gevrey functions. Beyond control theory, our results yield an optimal solution to the classical interpolation problem in Gevrey-$2$ classes, which improves results of Mitjagin on the optimal loss factor. The techniques developed here also extend to variants of the heat system with different boundary conditions or observation points.

\paragraph*{Keywords}
Heat equation; boundary control;
exact output tracking;
Gevrey regularity; interpolation problem.
\paragraph*{MSC 2020}26E10, 93C20, 35K05, 46EE10.

\section{Introduction}
\subsection{Output tracking}
Let us consider the one-dimensional heat problem with Neumann boundary conditions
\begin{equation}\label{eq:heat}
\left\lbrace \begin{array}{rclcc}
z_t(t,x) &=& z_{xx}(t,x),& 0 < x < 1,&t>0,\\
z_x(t,1) &=& u(t), \\
z_x(t,0) &=& 0,\\
z(0,x) &=& 0 ,  \\
y(t) &=& z(t,0),
\end{array}\right.
\end{equation}
where $z = z(t,x)$ denotes the unknown state, $u = u(t)$ the control, and $y = y(t)$ is what we call the \textit{output signal}. In the present article, we aim to obtain a sharp characterization of these outputs $y$ when $u$ ranges over $L^2(0,T)$. In other words, we look for a description of the set 
\[
\mathcal{Y}(0,T):= \{ y \in L^1_{\oploc}[0,T): u \in L^2(0,T) \},
\]
where $T \in (0,\infty]$ is the horizon time. We refer to this as an \emph{exact output tracking problem.}

In general, output tracking is  a central problem in automatic control theory. From both theoretical and practical perspectives, the full state of a control system is often neither directly controllable nor observable, making full-state tracking unrealistic. In such situations, one seeks instead to track only a given \emph{output} of the system, such as, a boundary trace, for instance, as in the present example.

Furthermore, output tracking can be viewed as a preliminary step towards the more demanding problem of \emph{output regulation}, which also involves disturbance rejection and robustness considerations. Most of the existing literature deals with asymptotic tracking achieved through (possibly dynamic) feedback laws, where the controlled output asymptotically approaches a reference trajectory as $t \to +\infty$ (see, for instance, \cite[Chapter 12]{back} for related results on the one-dimensional heat equation).

In contrast, much less is known about \emph{exact} or \emph{approximate} output tracking achieved through open-loop controls, both for finite-dimensional linear systems and for controlled partial differential equations. Some recent progress in this finite-dimensional setting was reported in \cite{zzz25}. In the context of the wave equation, exact output tracking in a closely related framework was investigated in \cite{Z1,Z2}. Finally, for higher-dimensional generalizations of system~\eqref{eq:heat}, the weaker notion of \emph{approximate output tracking}—where the desired tracking property is achieved up to an arbitrarily small error using an open-loop control—has been addressed in \cite{MR4873205}.

Concerning the exact tracking of \eqref{eq:heat}, only few results already exist in the literature, which are all in the case of finite $T$. On the one hand, any $y \in \mathcal{Y}(0,T)$ satisfies $y \in C^\infty[0,T]$ together with the two conditions 
\begin{equation}\label{eq:flat}
y^{(k)}(0) = 0,\quad \forall k \in \mathbb{N},
\end{equation}
and
\begin{equation}\label{eq:gevrey}
\exists C,R > 0,\quad \forall t \in [0,T],\quad \forall k \in \mathbb{N},\quad |y^{(k)}(t)| \leq C \frac{(2k)!}{R^{2k}}.
\end{equation}
Such a regularity result can be deduced from Gevrey's estimates \cite[Proposition 12.1, Chapter 5]{benedetto}. Moreover, in the above, the parameter $R$ is uniform with respect to $T$ and $y$. On the other hand, it has been established by Martin Rosier and Rouchon in \cite[Proposition 3.1]{reachable_martin} (see also Hölmgren \cite{holmgren} for connected results) using the flat-output method that a sufficient condition for a signal $\varphi \in C^\infty[0,T]$ to belong to $\mathcal{Y}(0,T)$ is that there is some $R>1$ such that $\varphi$  satisfies \eqref{eq:flat} together with \eqref{eq:gevrey}. \par 
Let us denote by $G^{2,R}[0,T]$ the set of those $\varphi \in C^\infty[0,T]$ satisfying the estimate \eqref{eq:gevrey}, and $G^{2,R}_{(0)}[0,T]$ the set of those $\varphi \in G^{2,R}[0,T]$ which satisfy in addition \eqref{eq:flat}. The above two results can be summed-up as 
\[
\exists R \leqslant  1,\quad \forall \epsilon,T>0,\quad G^{2,1+\epsilon}_{(0)}[0,T] \subset \mathcal{Y}(0,T) \subset G^{2,R}_{(0)}[0,T].
\]
Our description of $\mathcal{Y}(0,T)$ will use Hilbert spaces of Gevrey functions similar to $G^{2,R}$. We start with the case $T = +\infty$, which is our first main result.
\begin{Theorem}\label{theo:charac_Y_Tinfty}
The set $\mathcal{Y}(0,\infty)$ consists of those $\varphi \in C^\infty_{(0)}[0,\infty)$ such that 
\begin{equation}\label{eq:charac_Y_gevrey}
    \sum_{k=0}^{\infty} \left( \frac{||{\varphi^{(k+1)}}||_{L^2(0,\infty)}}{(2k)!2^k(1+k)^{3/4}}\right)^2 < \infty. 
\end{equation}
\end{Theorem}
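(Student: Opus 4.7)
The plan is to reformulate the exact tracking problem on the Fourier-Laplace side, reduce it to a Hardy-space membership condition, and then match that condition with \eqref{eq:charac_Y_gevrey} via a Plancherel-type identity.

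Applying the Laplace transform in time to \eqref{eq:heat}, the spatial ODE $s\hat{z}=\hat{z}_{xx}$ with $\hat{z}_x(s,0)=0$ is solved by $\hat{z}(s,x)=A(s)\cosh(\sqrt{s}\,x)$, and the boundary condition at $x=1$ forces $A(s)\sqrt{s}\sinh\sqrt{s}=\hat{u}(s)$. Evaluating at $x=0$ yields the transfer identity
\[
\sqrt{s}\sinh(\sqrt{s})\,\hat{y}(s)=\hat{u}(s).
\]
By the Paley-Wiener theorem, $u\in L^2(0,\infty)$ iff $\hat{u}$ extends to an element of $H^2(\mathbb{C}_+)$ with $\|\hat{u}\|_{H^2}^2=2\pi\|u\|_{L^2}^2$. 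Thus $y\in\mathcal{Y}(0,\infty)$ iff $\hat{y}$ is holomorphic in $\mathbb{C}_+$ and $\sqrt{s}\sinh(\sqrt{s})\,\hat{y}(s)\in H^2(\mathbb{C}_+)$, which on the imaginary axis amounts to the $L^2$-integrability of $\xi\mapsto\sqrt{i\xi}\sinh(\sqrt{i\xi})\,\hat{y}(i\xi)$.

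The main analytical step is a Plancherel-type identity rewriting the series in \eqref{eq:charac_Y_gevrey} as that weighted $L^2$ norm on the Fourier side. Using $\varphi^{(j)}(0)=0$ for all $j$, the standard Plancherel gives $\|\varphi^{(k+1)}\|_{L^2}^2=(2\pi)^{-1}\int_{\mathbb{R}}|\xi|^{2(k+1)}|\hat{\varphi}(i\xi)|^2\,d\xi$. Summing and exchanging with the integral,
\[
\sum_{k=0}^\infty\frac{\|\varphi^{(k+1)}\|_{L^2}^2}{((2k)!)^2\,4^k\,(1+k)^{3/2}}=\frac{1}{2\pi}\int_{\mathbb{R}}W(|\xi|)\,|\hat{\varphi}(i\xi)|^2\,d\xi,
\]
with $W(\rho):=\sum_{k\geq 0}\rho^{2(k+1)}/[((2k)!)^2\,4^k\,(1+k)^{3/2}]$. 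It then suffices to establish the pointwise equivalence
\[
W(\rho)\;\asymp\;|\sqrt{i\rho}\sinh(\sqrt{i\rho})|^2=\rho\bigl(\sinh^2(\sqrt{\rho/2})+\sin^2(\sqrt{\rho/2})\bigr),\qquad \rho\geq 0.
\]
Near $\rho=0$ both sides behave like $\rho^2$. For large $\rho$, a saddle-point analysis locates the dominant index at $k^*\sim\sqrt{\rho/8}$, produces a peak summand of order $\rho^{3/4}e^{\sqrt{2\rho}}$ after Stirling, with Gaussian width $\sim\rho^{1/4}$, whence $W(\rho)\sim C\rho\,e^{\sqrt{2\rho}}$; this matches $\rho\sinh^2(\sqrt{\rho/2})$ up to a constant, and the specific exponent $3/4$ on $(1+k)$ in \eqref{eq:charac_Y_gevrey} is precisely what produces the correct polynomial prefactor.

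Combining these steps gives both directions of the characterization: for the forward implication, the $C^\infty_{(0)}$ regularity of $y$ comes from the Gevrey estimates recalled around \eqref{eq:gevrey}, and summability in \eqref{eq:charac_Y_gevrey} follows from Paley-Wiener combined with the Plancherel identity; conversely, given $\varphi\in C^\infty_{(0)}[0,\infty)$ satisfying \eqref{eq:charac_Y_gevrey}, setting $\hat{u}(s):=\sqrt{s}\sinh(\sqrt{s})\,\hat{\varphi}(s)$ yields $\hat{u}\in H^2(\mathbb{C}_+)$ and one recovers $u\in L^2(0,\infty)$ by inverse Laplace transform. The main obstacle I foresee is the uniform pointwise comparison $W(\rho)\asymp|\sqrt{i\rho}\sinh(\sqrt{i\rho})|^2$ across all $\rho\geq 0$, demanding precise saddle-point control with every algebraic prefactor tracked, rather than merely exponential asymptotics.
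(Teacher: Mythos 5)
Your overall architecture is the same as the paper's (Laplace transform, transfer function $\sqrt{s}\sinh\sqrt{s}$, reduction to a Hardy-space condition, then a weighted Plancherel identity to return to the time domain), and your saddle-point computation for $W(\rho)$ is sound in substance: the saddle at $k^*\sim\sqrt{\rho/8}$, the exponential factor $e^{\sqrt{2\rho}}$ and the prefactor $\rho$ produced by the exponent $3/4$ all match what the paper obtains, there via a Mittag--Leffler weight and a discrete Laplace method (Lemma \ref{lem:laplace_discrete}) rather than by expanding the weight generated by the coefficients in \eqref{eq:charac_Y_gevrey}. That step, which you flag yourself, is only a sketch but is repairable.

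The genuine gap is elsewhere: you assert that $\sqrt{s}\sinh(\sqrt{s})\,\hat{y}(s)\in\mathcal{H}^2(\mathbb{C}_+)$ ``amounts to'' square-integrability of its trace on $i\mathbb{R}$, and in the converse direction you simply declare that $\hat{u}(s):=\sqrt{s}\sinh(\sqrt{s})\,\hat{\varphi}(s)$ lies in $\mathcal{H}^2(\mathbb{C}_+)$ once its boundary values are in $L^2$. This implication is false for general holomorphic functions: $G(s)=e^{s}/(1+s)$ has $L^2$ boundary data on the imaginary axis but is not in $\mathcal{H}^2(\mathbb{C}_+)$, because $L^2$ boundary values characterize $\mathcal{H}^2$ only together with the condition $\opsupp\mathcal{F}f\subset(-\infty,0]$ (Proposition \ref{prop:PWP}). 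The whole point of the paper's Proposition \ref{prop:hardy} and Lemma \ref{lem:spectrum} is to prove that multiplication by an entire function of order $<1$ (here $\phi(s)=\sinh\sqrt{s}/\sqrt{s}$ applied to $\hat{\dot\varphi}\in\mathcal{H}^2$) does not move the spectrum off $(-\infty,0]$; this requires a nontrivial argument with Gevrey mollifiers and the Paley--Wiener theorem, and it is exactly the ingredient missing from your converse direction. Two lesser points: for $u\in L^2(0,\infty)$ only, the passage of \eqref{eq:heat} to the Laplace transform has to be justified (the paper does this via the kernel representation \eqref{eq:repr_z} and Poisson summation, or via maximal regularity for smoother controls); and since $y$ is in general not in $L^2(0,\infty)$ (it tends to $\int_0^\infty u$), the Plancherel bookkeeping must be phrased in terms of $\dot y\in L^2$ and $\mathcal{F}\dot y$, as in \eqref{eq:charac_Y_fourier}, rather than in terms of $\hat{y}(i\xi)$ as if it were an $L^2$ Fourier transform.
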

The estimate \eqref{eq:charac_Y_gevrey} essentially means that $\varphi$ is Gevrey of order 2 and radius $1/\sqrt{2}$. The constant $1/\sqrt{2}$ is no coincidence, and could have been guessed from the characterization of the reachable space of the heat equation and the flat-output method. We introduce these two theories below. 

The reachable space for \eqref{eq:heat} is defined by
\begin{equation}\label{def:rs}
\mathcal{R}_T:= \{ z(T,\cdot): u \in L^2(0,T) \},\quad (T < \infty).
\end{equation}
The set $\mathcal{R}_T$ has been studied in great detail in \cite{fattorini_reachable,hartmann_orsoni,seidman, reachable_martin,sharp_reachable,reachable_heat,rkhs} where an explicit characterization was obtained. As shown in the cited references, the set $\mathcal{R}_T$ does not depend on $T$; we denote it by $\mathcal{R}$. Moreover, it consists of those functions $f$ that are holomorphic and even on the tilted square 
\[
\Omega:= \{ \zeta=a+ib \in \mathbb{C}: |a| + |b| < 1 \},
\]
and such that, moreover, $f' \in L^2(\Omega)$, where $f' = \partial f / \partial \zeta$ is the complex derivative. Note that such an $f$ satisfies 
\[
\frac{\partial f}{\partial {\bar{\zeta}}} = 0,\quad \frac{\partial f}{\partial \zeta} \in L^2(\Omega),
\]
hence, regarding $f$ as a function of two real variables we have $\nabla f \in L^2(\Omega)$. Therefore, by standard procedures (see \textit{e.g.} \cite[\S 1.1.11]{mazya}), we deduce that $f$ belongs to the Sobolev space $H^1(\Omega)$, where $\Omega$ is seen as a subset of $\mathbb R^2$. Thus,
\begin{equation}\label{eq:reachable_neu}
    \mathcal{R} = \{  f \in \mathcal{H}(\Omega)\cap H^1(\Omega),\, f \mbox{ even} \},
\end{equation}
where $\mathcal{H}(D)$ stands for the set of holomorphic functions on $D$, whenever $D$ is an open subset of $\mathbb{C}$.

Now take a solution $z$ of \eqref{eq:heat}. It lies in $C^\infty((0,T) \times (0,1))$, and can be extended as an even function of $x$, lying in $C^\infty((0,T) \times (-1,1))$, which solves the heat equation on $(0,T) \times (-1,1)$. The flat-output method is a general procedure to parameterize the state of a system as a power series of the successive time derivatives of a well-chosen output, we refer to \cite{fliess} for a general overview. For the heat equation \eqref{eq:heat}, this method has been implemented by developing $z$ as power series of $x$, we refer to \cite{holmgren, laroche, reachable_martin}. Notably, for any $u \in L^2(0,T)$, the solution $z$ of \eqref{eq:heat} writes
\begin{equation}\label{plat1}
z(t,x) = \sum_{k=0}^\infty y^{(k)}(t) \frac{x^{2k}}{(2k)!},
\end{equation}
where for fixed $0 \leq t \leq T$, the power series converges for $|x|$ small enough. 

From \eqref{plat1} and Cauchy's inequalities, we see that the largest $R$ such that $y \in G^{2,R}[0,T]$ is also the largest $R$ such that $z(t,x)$ is holomorphic with respect to $x$ in $D(0,R)$. From \eqref{eq:reachable_neu}, it follows that the largest such $R$ is $R = 1/\sqrt{2}$, therefore, trackable functions are Gevrey of order 2 and radius $R \leq 1/\sqrt{2}$, and no better in general. Theorem \ref{theo:charac_Y_Tinfty} makes this more precise and more importantly provides a converse statement.  \newline
\newline
Let us now turn to the case $0 < T < \infty$. Clearly, any $y \in \mathcal{Y}(0,T)$ has an extension to $\mathcal{Y}(0,\infty)$ (extend the control $u$ by zero for times $t > T$), which has to satisfy \eqref{eq:charac_Y_gevrey}. Therefore, a necessary condition for $y \in \mathcal{Y}(0,T)$ is that 
\begin{equation}\label{eq:reg_Y_T_fini}
    \sum_{k=0}^{\infty} \left( \frac{||{y^{(k+1)}}||_{L^2(0,T)}}{(2k)!2^k(1+k)^{3/4}}\right)^2 < \infty. 
\end{equation}
From \eqref{plat1} and \eqref{def:rs}, another necessary condition for $y$ to belong to $\mathcal{Y}(0,T)$ is that 
\begin{equation}\label{eq:final_in_R}
    \sum_{k=0}^\infty y^{(k)}(T) \frac{x^{2k}}{(2k)!} \in \mathcal{R}.
\end{equation}
It turns out that the two conditions  \eqref{eq:reg_Y_T_fini} and \eqref{eq:final_in_R} are also sufficient. 
\begin{Theorem}\label{theo:Y_finite_T}
For any $0 < T < \infty$, the space $\mathcal{Y}(0,T)$  consists of those $y \in C^\infty_{(0)}[0,T]$ which satisfy the two conditions \eqref{eq:reg_Y_T_fini} and \eqref{eq:final_in_R}. 
\end{Theorem}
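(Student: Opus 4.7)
The necessity of both conditions has been essentially established above: condition~\eqref{eq:reg_Y_T_fini} follows by extending any control $u\in L^2(0,T)$ that produces $y$ to an element of $L^2(0,\infty)$ by zero after time $T$, and then applying Theorem~\ref{theo:charac_Y_Tinfty} to the corresponding infinite-horizon output (the restriction of $L^2$ norms of derivatives to $(0,T)$ only makes the sum in~\eqref{eq:charac_Y_gevrey} smaller). Condition~\eqref{eq:final_in_R} is a direct consequence of the flat-output expansion~\eqref{plat1} evaluated at $t=T$, together with the definition of $\mathcal{R}$.

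My plan for sufficiency is to reduce to Theorem~\ref{theo:charac_Y_Tinfty} by extending $y$ to some $\tilde{y}\in\mathcal{Y}(0,\infty)$: once such an extension exists, the theorem yields $\tilde{u}\in L^2(0,\infty)$ whose output is $\tilde{y}$, and by causality of~\eqref{eq:heat} the restriction $u:=\tilde{u}|_{(0,T)}\in L^2(0,T)$ still produces $y$ on $[0,T]$, so that $y\in\mathcal{Y}(0,T)$. A natural candidate is the free-evolution extension. Let
\[
f(x):=\sum_{k\geq 0}y^{(k)}(T)\,\frac{x^{2k}}{(2k)!},
\]
which belongs to $\mathcal{R}$ by~\eqref{eq:final_in_R}; let $\zeta$ solve the heat equation on $(T,\infty)\times(0,1)$ with homogeneous Neumann conditions on both sides and initial datum $\zeta(T,\cdot)=f$; and set $\tilde{y}(t):=y(t)$ on $[0,T]$ and $\tilde{y}(t):=\zeta(t,0)$ for $t\geq T$. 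The identities $\partial_t^k\zeta(T,\cdot)=\partial_x^{2k}\zeta(T,\cdot)$ and $\partial_x^{2k}f(0)=y^{(k)}(T)$ ensure that the left and right derivatives of $\tilde{y}$ at $T$ agree, so that $\tilde{y}\in C^\infty_{(0)}[0,\infty)$.

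The remaining and central task is to verify that $\tilde{y}$ satisfies~\eqref{eq:charac_Y_gevrey}. The contribution from $[0,T]$ is controlled directly by~\eqref{eq:reg_Y_T_fini}. For the contribution from $[T,\infty)$, I would expand $f$ in the Neumann eigenbasis, $f=\sum_n c_n\cos(n\pi x)$, so that $\tilde{y}(t)=\sum_n c_n e^{-\lambda_n(t-T)}$ with $\lambda_n=n^2\pi^2$; direct integration then expresses $\|\tilde{y}^{(k+1)}\|_{L^2(T,\infty)}^2$ as a positive definite quadratic form in the $c_n$ with Cauchy-type kernel $(\lambda_n\lambda_m)^{k+1}/(\lambda_n+\lambda_m)$. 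Swapping the sums over $k$ and over $(n,m)$ reduces the problem to bounding a double sum weighted by an entire function of $\sqrt{\lambda_n\lambda_m}$ against the Fourier coefficients of $f$. I expect this estimate to be the main obstacle: it is precisely the sharp matching between the reachable space $\mathcal{R}$ and the Hilbert space defined by~\eqref{eq:charac_Y_gevrey}, which should follow from the Plancherel-type identity for Hilbert spaces of Gevrey-$2$ functions announced in the abstract. Granted this estimate, $\tilde{y}\in\mathcal{Y}(0,\infty)$ and the reduction above concludes the proof.
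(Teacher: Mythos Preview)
Your overall strategy---extend $y$ past $T$ by the free heat evolution of the state $f(x)=\sum_k y^{(k)}(T)\,x^{2k}/(2k)!$, check the glued function lies in $\mathcal{Y}(0,\infty)$, then restrict---is exactly the paper's. The necessity argument and the smoothness of the glued extension are fine.

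The gap is in the step you yourself flag as the ``main obstacle'': bounding the $(T,\infty)$ contribution to~\eqref{eq:charac_Y_gevrey}. Your plan is to expand $f$ in Neumann eigenfunctions and estimate a double sum in the coefficients $c_n$, hoping this follows from the Plancherel-type theorem. But membership of $f$ in $\mathcal{R}$ is characterized by holomorphy on the square $\Omega$ together with $f'\in L^2(\Omega)$, not by any simple weighted $\ell^2$ condition on the $(c_n)$; translating one into the other is essentially the full reachable-space characterization, and the Plancherel result of the paper (Theorem~\ref{theo:plancherel_gevrey}) concerns Fourier transforms on $\mathbb{R}$, not cosine coefficients on $(0,1)$. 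So this route, while not hopeless, is not the short step you suggest.

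The paper bypasses the estimate entirely with the following observation. Since $f\in\mathcal{R}$, by the very definition of the reachable space there exists a control $u\in L^2(0,T)$ such that the solution $z$ of~\eqref{eq:heat} satisfies $z(T,\cdot)=f$. Extend $u$ by zero on $(T,\infty)$; then $u\in L^2(0,\infty)$ and the corresponding output $t\mapsto z(t,0)$ lies in $\mathcal{Y}(0,\infty)$ \emph{by definition}. For $t>T$ this output is precisely the free evolution $\zeta(t,0)$ you wrote down. Hence the $(T,\infty)$ part of $\tilde y$ is the tail of an element of $\mathcal{Y}(0,\infty)$ and automatically satisfies~\eqref{eq:charac_Y_gevrey} on $(T,\infty)$---no spectral computation needed. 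Combining with~\eqref{eq:reg_Y_T_fini} on $(0,T)$ gives $\tilde y\in\mathcal{Y}(0,\infty)$ via Theorem~\ref{theo:charac_Y_Tinfty}, and you conclude as you indicated.
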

Using cutoff functions that are Gevrey of order $1 < s < 2$, we easily deduce the following.
\begin{Corollary}\label{coro:loc_tracable}
Let $0 < T < \infty $ and $\varphi \in C^\infty_{(0)}[0,T]$ satisfying \eqref{eq:reg_Y_T_fini} (where $y$ is replaced by $\varphi$). Then for every $0 < \delta < T$, we have $\varphi \in \mathcal{Y}(0,T-\delta)$.
\end{Corollary}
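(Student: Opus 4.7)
The plan is to reduce to Theorem~\ref{theo:Y_finite_T} via a Gevrey cutoff. Pick $s \in (1,2)$ and a cutoff $\chi \in C^\infty(\mathbb{R})$ with $\chi \equiv 1$ on $(-\infty, T-\delta]$, $\chi \equiv 0$ on $[T, +\infty)$, and satisfying a Gevrey-$s$ estimate $\|\chi^{(j)}\|_{L^\infty(\mathbb{R})} \leq C A^j (j!)^s$ for all $j \in \mathbb{N}$, with constants $C, A$ independent of $j$. Such a $\chi$ is constructed by convolving the indicator of $(-\infty, T-\delta/2]$ with a compactly supported Gevrey-$s$ mollifier, the existence of which is classical (non-quasi-analyticity holds for $s>1$). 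In particular $\chi^{(k)}(T) = 0$ for every $k$. Define $\psi := \chi \varphi$, extended by zero on $[T, \infty)$; then $\psi \in C^\infty_{(0)}[0,T]$, $\psi \equiv \varphi$ on $[0, T-\delta]$, and $\psi^{(k)}(T) = 0$ for all $k$.

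I would then check the two hypotheses of Theorem~\ref{theo:Y_finite_T} for $\psi$ on the horizon $[0,T]$. Condition \eqref{eq:final_in_R} is automatic: since $\psi^{(k)}(T)=0$ for all $k$, the series $\sum_k \psi^{(k)}(T) x^{2k}/(2k)!$ is identically zero, which trivially lies in $\mathcal{R}$. For condition \eqref{eq:reg_Y_T_fini}, apply Leibniz to obtain
\[
\|\psi^{(k+1)}\|_{L^2(0,T)} \leq \sum_{j=0}^{k+1} \binom{k+1}{j} \|\chi^{(j)}\|_{L^\infty} \|\varphi^{(k+1-j)}\|_{L^2(0,T)},
\]
and substitute the Gevrey-$s$ bound on $\chi^{(j)}$. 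Dividing by the weight $(2k)!\,2^k(1+k)^{3/4}$, the goal is to produce a discrete convolution inequality of the form $b_k \leq \sum_{m=0}^{k} K_{k-m}\, a_m$, where $a_m := \|\varphi^{(m+1)}\|_{L^2(0,T)}/((2m)!\,2^m(1+m)^{3/4})$ is square-summable by assumption and $(K_\ell)_\ell$ is summable. Verifying that $(K_\ell)_\ell \in \ell^1$ is the main technical step: after simplifying $\binom{k+1}{j} (2m)!/(2k)!$ using $m=k+1-j$ and comparing with Stirling, one finds $K_\ell \sim \text{const}\cdot A^\ell (\ell!)^{s-2}$ up to polynomial factors, which is summable precisely because $s < 2$. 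Once this is in hand, Young's convolution inequality for $\ell^2 \ast \ell^1$ yields $(b_k) \in \ell^2$, i.e. \eqref{eq:reg_Y_T_fini}.

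The hardest part is this convolution bookkeeping, especially the handling of the $(1+k)^{3/4}$ factor and the tight matching of the binomial coefficient with $(2k)!/(2m)!$; the key qualitative fact enabling it is the strict inequality $s < 2$, which provides the decaying factor $(j!)^{s-2}$ needed for summability. Once \eqref{eq:reg_Y_T_fini} and \eqref{eq:final_in_R} are established, Theorem~\ref{theo:Y_finite_T} supplies a control $u \in L^2(0,T)$ whose output on $[0,T]$ equals $\psi$. Restricting $u$ to $[0, T-\delta]$ gives a control in $L^2(0, T-\delta)$ whose output is $\psi|_{[0,T-\delta]} = \varphi|_{[0,T-\delta]}$, proving $\varphi \in \mathcal{Y}(0, T-\delta)$.
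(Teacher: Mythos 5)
Your proof follows essentially the same route as the paper's: multiply $\varphi$ by a Gevrey-$s$ cutoff ($1<s<2$) equal to $1$ on $[0,T-\delta]$ and flat at $T$, note that \eqref{eq:final_in_R} then holds trivially, verify \eqref{eq:reg_Y_T_fini} for the product, apply Theorem~\ref{theo:Y_finite_T}, and restrict to $[0,T-\delta]$. The only difference is that you carry out the Leibniz/Young convolution estimate explicitly (correctly identifying the decisive factor $(\ell!)^{s-2}$ from $s<2$), whereas the paper delegates this stability of \eqref{eq:reg_Y_T_fini} under Gevrey-$s$ multiplication to \cite[Lemma 3.7]{reachable_martin}.
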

The condition \eqref{eq:reg_Y_T_fini} does not imply \eqref{eq:final_in_R}; the following proposition provides a counterexample. Here and throughout, the symbol $\lesssim $ means the inequality holds up to a numerical multiplicative constant independent of the parameters; a similar meaning is used for $\gtrsim$.

\begin{Proposition}\label{prop:borel_LR}
There exists a sequence of real non-negative numbers $(a_k)_{k\in\mathbb N}$ such that: 
\begin{itemize}
    \item There exists $\varphi \in C^\infty[0,1]$ such that
\begin{equation}\label{ce1}\varphi^{(k)}(1^-) = a_k,\quad |\varphi^{(k)}(t)| \lesssim \frac{(2k)!}{\sqrt{1+k}},\quad (k \geq 0,\quad 0 \leq t \leq 1).
\end{equation}
\item There is no $\psi \in C^\infty[1,2]$ satisfying \eqref{eq:reg_Y_T_fini} (with $(1,2)$ in place of $(0,T)$) such that 
\begin{equation}\label{eq:interp_bn_right}
\psi^{(k)}(1^+) = a_k,\quad \forall k \geq 0. 
\end{equation}
\end{itemize}
\end{Proposition}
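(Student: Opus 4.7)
The plan is to exhibit $(a_k)$ explicitly and derive the non-existence of $\psi$ via a concatenation argument. Specifically, choosing $a_k$ so that the flat-output spatial profile $f(x) := \sum_k a_k x^{2k}/(2k)!$ has logarithmic branch-point behavior at the corners $x = \pm 1$ of $\Omega$ will make $f$ fail to lie in $\mathcal R = H^1(\Omega) \cap \mathcal H(\Omega)$. Then, if a $\psi$ as in the second bullet existed, gluing it with $\varphi$ from the first bullet would produce $\theta \in C^\infty_{(0)}[0, 2]$ satisfying \eqref{eq:reg_Y_T_fini} on $(0, 2)$. Corollary~\ref{coro:loc_tracable} with $T = 2$, $\delta = 1$ would then give $\theta \in \mathcal{Y}(0, 1)$, whereupon Theorem~\ref{theo:Y_finite_T} would force $f \in \mathcal R$, the desired contradiction.

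To implement this, take $\varphi_0(t) := \int_1^{\infty} e^{-\sqrt\lambda}\, e^{-\lambda(1-t)}/\lambda\, d\lambda$ for $t \in [0,1]$ and set $a_k := \varphi_0^{(k)}(1)$. Differentiation under the integral gives $\varphi_0^{(k)}(t) = \int_1^\infty \lambda^{k-1} e^{-\sqrt\lambda - \lambda(1-t)}\, d\lambda$, hence $|\varphi_0^{(k)}(t)| \leq a_k$ on $[0,1]$. The substitution $\lambda = u^2$ shows $\int_0^\infty \lambda^{k-1} e^{-\sqrt\lambda}\, d\lambda = 2(2k-1)! = (2k)!/k$ for $k \geq 1$, so $a_k \in [(2k)!/k - 1/k,\, (2k)!/k]$ is non-negative and $a_k \lesssim (2k)!/\sqrt{1+k}$. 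Multiplying $\varphi_0$ by a Gevrey-$s$ cutoff ($1 < s < 2$) equal to $1$ near $t = 1$ and vanishing near $t = 0$ places the result $\varphi$ in $C^\infty_{(0)}[0,1]$ with the same Taylor values at $t = 1$ and the same pointwise Gevrey bound (the Leibniz cross-terms are absorbed because $(k!)^s = o((2k)!/\sqrt{1+k})$).

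To see $f \notin \mathcal R$, the identity $\sum_{k \geq 1}(\lambda x^2)^k/(2k)! = \cosh(x\sqrt\lambda) - 1$ yields
\[
f(x) = a_0 - \log(1-x^2) - \int_0^1 \frac{e^{-\sqrt\lambda}}{\lambda}\bigl(\cosh(x\sqrt\lambda) - 1\bigr) d\lambda,
\]
and the integral above is entire in $x$, so $f$ differs from $-\log(1 - x^2)$ by a smooth correction. A polar-coordinate estimate near the corner $x = 1$ (where $\Omega$ is locally a right-angled sector) shows that $\|2x/(1-x^2)\|_{L^2(\Omega)}$ diverges logarithmically, hence $f' \notin L^2(\Omega)$ and $f \notin \mathcal R$.

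Finally, assuming such $\psi$ exists, set $\theta(t) := \varphi(t)$ for $t \in [0,1]$ and $\theta(t) := \psi(t)$ for $t \in [1,2]$; matching of all Taylor values at $t = 1$ makes $\theta \in C^\infty[0,2]$, and the cutoff ensures $\theta \in C^\infty_{(0)}[0,2]$. The $(0,1)$-contribution to \eqref{eq:reg_Y_T_fini} is dominated by $\sum_k k^{5/2}/4^k$ via $\|\varphi^{(k+1)}\|_{L^2(0,1)} \leq \|\varphi^{(k+1)}\|_{L^\infty} \lesssim (2k+2)!/\sqrt{2+k}$, while the $(1,2)$-contribution is finite by hypothesis, so $\theta$ satisfies \eqref{eq:reg_Y_T_fini} on $(0,2)$. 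Corollary~\ref{coro:loc_tracable} then gives $\theta \in \mathcal Y(0,1)$ and Theorem~\ref{theo:Y_finite_T} forces $f \in \mathcal R$, a contradiction. The main technical difficulty is the sharp simultaneous control required in the construction of $\varphi$: producing an explicit function with the pointwise Gevrey bound of \eqref{ce1} whose flat-output spatial profile at $t = 1$ is pushed off $\mathcal R$ by a genuine logarithmic singularity on $\partial \Omega$; the Laplace-type integral above is tailored precisely for this.
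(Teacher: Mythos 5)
Your argument is correct, and while the contradiction mechanism for the second bullet is the same as the paper's (glue the hypothetical $\psi$ to $\varphi$, make the concatenation flat at the left endpoint with a Gevrey-$s$ cutoff, $1<s<2$, apply Corollary~\ref{coro:loc_tracable} and the necessity part of Theorem~\ref{theo:Y_finite_T} to force the flat-output profile at $t=1$ into $\mathcal{R}$), your construction of the sequence is genuinely different. The paper obtains $(a_k)$ from the Borel-summation Lemma~\ref{lem:borel_sum} applied to $1/(1-z)$, which yields $a_k\asymp (2k)!/\sqrt{1+k}$ and a profile behaving like $\opLi_{-1/2}(\zeta^2)\sim c\,(1-\zeta)^{-3/2}$ near the corner, so the contradiction is that the profile fails even to lie in $A^2(\Omega)$; you instead take the explicit Laplace integral $a_k=\int_1^\infty \lambda^{k-1}e^{-\sqrt\lambda}\,d\lambda$, so that $a_k=(2k)!/k+O(1/k)$ and, via $\sum_{k\geq 1}x^{2k}/k=-\log(1-x^2)$, the profile is $-\log(1-\zeta^2)$ plus an entire function: it lies in $A^2(\Omega)$ but its derivative $\sim (1-\zeta)^{-1}$ fails to be square-integrable on the right-angled corner, which contradicts membership in $\mathcal{R}=\mathcal H(\Omega)\cap H^1(\Omega)$ (even functions) all the same. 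What your route buys is self-containedness (no reliance on the Borel-summation machinery of the very reference whose Theorem 3.2 this proposition refutes) and a completely transparent singularity; what the paper's route buys is a sequence saturating the bound $(2k)!/\sqrt{1+k}$ with a stronger failure (not even $L^2(\Omega)$), which it exploits again in the discussion of the extension problem. Two small presentational points: the cutoff is not needed for the first bullet (the statement does not require flatness; you only need it at $t=0$ before invoking Corollary~\ref{coro:loc_tracable}), and your one-line absorption of the Leibniz cross-terms via $(k!)^s=o((2k)!/\sqrt{1+k})$ only treats the extreme term — to keep the pointwise bound and condition \eqref{eq:reg_Y_T_fini} after multiplication by the cutoff you should invoke the stability of the Gevrey-$2$ class under multiplication by a Gevrey-$s$ function with $1<s<2$, i.e.\ exactly the product lemma (\cite[Lemma 3.7]{reachable_martin}) that the paper itself uses in the proof of Corollary~\ref{coro:loc_tracable}; with that reference the step is standard and your proof is complete.
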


\begin{Remark}\label{rem:MMR}
We notice that Proposition \ref{prop:borel_LR} contradicts \cite[Theorem 3.2]{reachable_martin}, see Appendix~\ref{app}. 
\end{Remark}

\subsection{Some intermediate results of interest}
Let us sketch the proof of Theorem \ref{theo:charac_Y_Tinfty}, in order to emphasize two intermediate results that are interesting in themselves (namely, Proposition \ref{prop:hardy} and Theorem \ref{theo:plancherel_gevrey}). For $u \in L^2(0,\infty)$, we apply the Laplace transform with respect to time in \eqref{eq:heat} to obtain
\[
\hat{y}(s) = \frac{\hat{u}(s)}{\sqrt{s} \sinh \sqrt{s}},\quad s \in \mathbb{C}_+,
\]
where $\hat{~}$ stands for the Laplace transform and $\mathbb{C}_+$ is the open right half-plane. This can be rewritten as
\begin{equation}\label{eq:laplace_rewrite}
\hat{u}(s) = \phi(s)\hat{\dot{y}}(s),\quad \phi(s):= \frac{\sinh \sqrt{s}}{\sqrt{s}},
\end{equation}
where $\hat{u}$ belongs to the Hardy space $\mathcal{H}^2(\mathbb{C}_+)$ and $1/\phi \in \mathcal{H}^\infty(\mathbb{C}_+)$, hence $\hat{\dot{y}} \in \mathcal{H}^2(\mathbb{C}_+)$ (see \S \ref{subsec:fine_hardy} for the definition of Hardy spaces). From \eqref{eq:laplace_rewrite} we see that the description of $\mathcal{Y}(0,\infty)$ is equivalent, through the Laplace transform, to the description of these $F \in \mathcal{H}^2(\mathbb{C}_+)$ such that $\phi F$ lies again in $\mathcal{H}^2(\mathbb{C}_+)$. Proposition \ref{prop:hardy} below allows us to characterize such functions $F$, under a growth assumption on $\phi$. 
\begin{Proposition}\label{prop:hardy}
Let $\phi$ be an entire function of order\footnote{see Definition \ref{order}} $< 1$. Then, for every $F \in \mathcal{H}^2(\mathbb{C}_+)$, the product $\phi F$ lies in $\mathcal{H}^2(\mathbb{C}_+)$ if and only if 
\[
\int_\mathbb{R} | \phi(i\xi) F(i\xi) |^2 d\xi < \infty. 
\]
\end{Proposition}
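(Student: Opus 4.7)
The ``only if'' direction is routine: the boundary trace of any $\mathcal{H}^2(\mathbb{C}_+)$ function lies in $L^2(i\mathbb{R})$, and since $\phi$ is entire and $F$ admits non-tangential limits almost everywhere, the trace of $\phi F$ equals $\xi \mapsto \phi(i\xi) F(i\xi)$ almost everywhere.

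For the converse, I would set $g(\xi) := \phi(i\xi) F(i\xi) \in L^2(\mathbb{R})$ and invoke the standard Hardy decomposition $g = g_+ + g_-$ of $L^2(i\mathbb{R})$ into the orthogonal boundary-trace subspaces of $\mathcal{H}^2(\mathbb{C}_+)$ and $\mathcal{H}^2(\mathbb{C}_-)$. Letting $H_\pm \in \mathcal{H}^2(\mathbb{C}_\pm)$ denote the associated extensions with boundary traces $g_\pm$, the goal becomes to show $g_- = 0$; for then $\phi F$ and $H_+$ are two holomorphic functions on $\mathbb{C}_+$ with common trace $g_+$, and standard $\mathcal{H}^2$-uniqueness forces $\phi F = H_+ \in \mathcal{H}^2(\mathbb{C}_+)$. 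The central construction is the piecewise function
\[
\Psi(s) := \begin{cases} \phi(s)F(s) - H_+(s), & s \in \mathbb{C}_+, \\ H_-(s), & s \in \mathbb{C}_-, \end{cases}
\]
whose two halves share the non-tangential boundary trace $g_-$ on $i\mathbb{R}$. The reproducing-kernel estimate $|F(s)|, |H_\pm(s)| \lesssim |\mathrm{Re}\, s|^{-1/2}$ makes $\Psi$ locally integrable near the imaginary axis; a distributional computation of $\bar\partial\Psi$, performed via Stokes on each half-plane (approximated by $\Psi(\cdot \pm \sigma)$ and letting $\sigma \to 0^+$), pairs the Dirac masses on $i\mathbb{R}$ coming from the half-plane indicators against the vanishing jump $g_- - g_- = 0$. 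This gives $\bar\partial\Psi = 0$ on all of $\mathbb{C}$, and Weyl's lemma for the elliptic operator $\bar\partial$ promotes $\Psi$ to an entire function.

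To conclude that $\Psi \equiv 0$, I would exploit the order hypothesis by fixing $\epsilon > 0$ so small that the order of $\phi$ is less than $1-\epsilon$, giving $|\phi(s)| \leq C_\epsilon \exp(|s|^{1-\epsilon})$. Combined with the above decay of $F$ and $H_+$ (and the similar decay of $H_-$ in $\mathbb{C}_-$), this yields $|\Psi(s)| \leq C'_\epsilon \exp(|s|^{1-\epsilon})$ on $\{ |\mathrm{Re}\, s| \geq 1 \}$; applying the maximum principle on discs of radius two around points close to $i\mathbb{R}$, where $\Psi$ is entire and hence bounded on compact sets, propagates this estimate throughout $\mathbb{C}$. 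Thus $\Psi$ is an entire function of order strictly less than $1$, in particular of exponential type zero, whose restriction to $i\mathbb{R}$ coincides with $g_- \in L^2(\mathbb{R})$. The $L^2$ Paley--Wiener theorem---an entire function of exponential type zero whose restriction to a line is square-integrable must vanish identically---then closes the argument, forcing $g_- = 0$ and hence $\phi F = H_+ \in \mathcal{H}^2(\mathbb{C}_+)$. The most delicate point I anticipate is the distributional gluing: one must carefully justify the cancellation of the Dirac contributions from the half-plane indicators, which relies on the $L^1_{\mathrm{loc}}$ bound of $\Psi$ near $i\mathbb{R}$ and on identifying the non-tangential traces of both pieces with the same $L^2$ function $g_-$.
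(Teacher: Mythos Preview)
Your approach is correct in outline and genuinely different from the paper's. The paper reduces Proposition~\ref{prop:hardy} to a purely Fourier-side statement (its Lemma on spectral supports): for $\phi$ entire of order $<1$ and $f\in L^2(\mathbb{R})$ with $\phi f\in L^2(\mathbb{R})$, one has $\opsupp\mathcal{F}(\phi f)\subset\opsupp\mathcal{F}f$. This is proved by constructing Gevrey multipliers $\sigma_\epsilon=\mathcal{F}\varrho_\epsilon$ (with $\varrho_\epsilon$ a Gevrey-$s$ mollifier, $1<s<1/\rho$) so that $\sigma_\epsilon\phi$ lies in $L^2(\mathbb{R})$ and has Fourier support in $[-\epsilon,\epsilon]$ by the classical Paley--Wiener theorem; one then passes to the limit. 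Combined with the paper's Proposition~\ref{prop:PWP}, this immediately gives Proposition~\ref{prop:hardy}. Your route stays entirely in the complex-analytic world: you glue $\phi F-H_+$ and $H_-$ across $i\mathbb{R}$, obtain an entire function of order $<1$, and kill it with the $L^2$ Paley--Wiener theorem. This avoids the Gevrey-mollifier machinery altogether, at the cost of yielding only the half-line support conclusion rather than the stronger support-containment statement the paper proves (and which it later reuses, e.g.\ in \S3.3).

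One point in your write-up needs sharpening. Your growth estimate in the strip $\{|\opRe s|<1\}$ invokes ``the maximum principle on discs of radius two,'' noting that $\Psi$ is entire and hence bounded on compact sets. That boundedness is not uniform as $|\opIm s_0|\to\infty$, so the maximum principle on such discs does not close: the boundary circle still meets the strip where you have no a~priori bound. The clean fix is to use the sub-mean-value inequality for $|\Psi|^2$ over $D(s_0,2)$ and then exploit the $\mathcal{H}^2$ vertical-line bounds $\int_\mathbb{R}|F(\sigma+iy)|^2\,dy\le\|F\|_{\mathcal{H}^2}^2$ (and similarly for $H_\pm$). Splitting $D(s_0,2)$ into its two half-plane parts gives
\[
|\Psi(s_0)|^2\;\lesssim\;\|H_-\|_{\mathcal{H}^2}^2+\|H_+\|_{\mathcal{H}^2}^2+\Bigl(\max_{D(s_0,2)}|\phi|\Bigr)^2\|F\|_{\mathcal{H}^2}^2\;\lesssim\;e^{2(|s_0|+2)^{1-\epsilon}},
\]
which is the required bound. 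With this replacement your argument goes through: $\Psi$ is entire of order $<1$, its restriction to $i\mathbb{R}$ is $g_-\in L^2$, and Paley--Wiener forces $\Psi\equiv 0$, hence $\phi F=H_+\in\mathcal{H}^2(\mathbb{C}_+)$ directly (no separate uniqueness step needed).
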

We will obtain Proposition \ref{prop:hardy} as a consequence of the following assertion:
\begin{equation}\label{eq:spec_entire}
    \opsupp \mathcal{F} \phi(i\cdot) \subset \{ 0 \}. 
\end{equation}
On the one hand, a function $\phi$ entire on $\mathbb{C}$ with order $< 1$ may not even lie in $\mathcal{S}'(\mathbb{R})$ (\textit{e.g.} $\phi(s) = \cosh \sqrt{s}$), hence \eqref{eq:spec_entire} is a formal statement. On the other hand, the Paley--Wiener theorem \cite[Theorem 19.3, p. 370]{rudin} asserts that if $f \in \mathcal{H}(\mathbb{C}) \cap L^2(\mathbb{R})$ satisfies
\[
    \exists C > 0,\quad \forall z \in \mathbb{C},\quad |f(z)| \leq Ce^{A|z|},
\]
for some $A > 0$, then $\opsupp \mathcal{F}f \subset [-A,A]$. As a function $\phi$ of order $<1$ satisfies the above estimate for any $A > 0$, the above Paley-Wiener theorem is somehow compatible with \eqref{eq:spec_entire}. Lemma \ref{lem:spectrum} will give a meaning to \eqref{eq:spec_entire}. \newline
\newline
From Proposition \ref{prop:hardy} and \eqref{eq:laplace_rewrite}, we readily obtain that 
\begin{equation}\label{eq:charac_Y_fourier}
\int_\mathbb{R} \left| \mathcal{F}\dot{y}(\xi) \omega(\xi) \right|^2 d\xi < \infty,\quad \omega(\xi) = \frac{e^{\sqrt{|\xi|/2}}}{(1+|\xi|)^{1/2}},
\end{equation}
characterizes $\mathcal{Y}(0,\infty)$. To translate \eqref{eq:charac_Y_fourier} in the time domain, we will establish the following Plancherel-type theorem for certain Hilbert spaces of Gevrey functions. For parameters $s,R > 0$ and $\gamma \in \mathbb{R}$ define the Hilbert space
\[
\hat{\mathcal{G}}_{s,R,\gamma}:= L^2(\mathbb{R} , \omega(\xi)^2d\xi),\quad  \omega(\xi) = (1+|\xi|)^\gamma e^{R |\xi|^{1/s}}.
\]
Note that $\hat{\mathcal{G}}_{s,R,\gamma} \subset L^2(\mathbb{R},d\xi)$ with continuous dense injection. The Fourier transforms of the elements of $\hat{\mathcal{G}}_{s,R,\gamma}$ turn out to be precisely those $\varphi \in C^\infty(\mathbb{R})$ such that 
\begin{equation}\label{eq:def_G_sRgamma_Mn}
    \| \varphi\|_{\mathcal{G}_{s,R,\gamma}}^2:=  \sum_{n=0}^\infty \left( \frac{\| \varphi^{(n)}\|_{L^2(\mathbb{R})}}{M_n} \right)^2 < \infty , \quad M_n:=  \frac{(ns)!}{R^{ns}} (1+n)^{-s\gamma-1/4},
\end{equation}
where $(ns)! := \Gamma(ns+1)$. The above defines a Hilbert space $\mathcal{G}_{s,R,\gamma}$, which is continuously and densely embedded in $L^2(\mathbb{R},dx)$.
\begin{Theorem}\label{theo:plancherel_gevrey}
For every $s ,R > 0$ and $\gamma \in \mathbb{R}$, the Fourier transform is a topological isomorphism between $\mathcal{G}_{s,R,\gamma}$ and $\hat{\mathcal{G}}_{s,R,\gamma}$.
\end{Theorem}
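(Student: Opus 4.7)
The plan is to reduce the statement, via Plancherel's identity and Tonelli's theorem, to a pointwise comparison between the spectral density implicit in the time-domain norm and the weight defining $\hat{\mathcal{G}}_{s,R,\gamma}$. For any sufficiently regular $\varphi$, setting $F:=\mathcal{F}\varphi$, Plancherel gives $\|\varphi^{(n)}\|_{L^2(\mathbb{R})}^2 = c\int_\mathbb{R}|\xi|^{2n}|F(\xi)|^2 d\xi$ for a universal constant $c$ (depending only on the Fourier convention). Swapping the non-negative series and integral yields
\[
\|\varphi\|_{\mathcal{G}_{s,R,\gamma}}^2 \;=\; c\int_\mathbb{R} |F(\xi)|^2\,\Omega(\xi)\,d\xi, \qquad \Omega(\xi):= \sum_{n=0}^\infty \frac{|\xi|^{2n}}{M_n^2},
\]
and the theorem reduces to the pointwise equivalence $\Omega(\xi)\asymp\omega(\xi)^2$ on $\mathbb{R}$.

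The heart of the argument is this comparison, which I would establish by a Laplace/saddle-point analysis. Setting $\tau := R|\xi|^{1/s}$ and plugging in the explicit form of $M_n$, the series becomes
\[
\Omega(\xi)\;=\;\sum_{n=0}^\infty \frac{\tau^{2ns}(1+n)^{2s\gamma+1/2}}{\Gamma(ns+1)^2}.
\]
Stirling's formula shows that the logarithm of the $n$-th term is maximized near $n_*=\tau/s$, with peak value of order $e^{2\tau}\tau^{2s\gamma-1/2}$ and second derivative $-2s^2/\tau$. Summing the resulting Gaussian of width $\sqrt{\tau}/(s\sqrt{2})$ yields, as $|\xi|\to\infty$, the asymptotic $\Omega(\xi)\sim C_{s,\gamma,R}\,e^{2\tau}\tau^{2s\gamma}\asymp e^{2R|\xi|^{1/s}}|\xi|^{2\gamma}$, matching $\omega(\xi)^2$. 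For $|\xi|$ bounded, $\Omega(\xi)$ is trapped between its $n=0$ term (equal to $1$) and a finite maximum reached on any compact set, while $\omega^2$ is bounded above and below by positive constants on the same set, so the two-sided bound extends to all of $\mathbb{R}$.

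Once the comparison is secured, the norm equivalence $\|\varphi\|_{\mathcal{G}_{s,R,\gamma}}\asymp\|F\|_{\hat{\mathcal{G}}_{s,R,\gamma}}$ is immediate and delivers continuity in both directions. Injectivity of $\mathcal{F}$ is automatic from its injectivity on tempered distributions. For surjectivity, given $F\in\hat{\mathcal{G}}_{s,R,\gamma}$, the weight $\omega$ forces super-polynomial decay of $F$, so $\xi^n F\in L^2(\mathbb{R})$ for every $n$; hence $\varphi:=\mathcal{F}^{-1}F$ has all weak derivatives in $L^2$ and lies in $C^\infty(\mathbb{R})$ by Sobolev embedding, and the pointwise comparison plugs $\varphi$ back into $\mathcal{G}_{s,R,\gamma}$.

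The main obstacle I anticipate is making the saddle-point estimate genuinely \emph{uniform} in $\xi$: the Gaussian approximation is transparent when $\tau\gg 1$, but one must also control the far tails $|n-n_*|\gg\sqrt{\tau}$ and the transitional regime where $n_*$ is too small for Stirling's expansion to be fully reliable. The cleanest remedy is to exploit the log-convexity of the sequence $\Gamma(ns+1)^2/(1+n)^{2s\gamma+1/2}$: the ratios of consecutive terms of $\Omega(\xi)$ are monotone in $n$ and cross $1$ essentially at $n_*$, which forces geometric decay away from the peak and bounds the total sum by a fixed multiple of its largest term, uniformly in $\tau$. With that uniform control in hand, the two-sided estimate $c\,\omega^2\leq\Omega\leq C\,\omega^2$ closes the proof.
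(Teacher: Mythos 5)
Your reduction is sound and, in fact, more direct than the paper's: Plancherel plus Tonelli legitimately give $\|\varphi\|_{\mathcal{G}_{s,R,\gamma}}^2=c\int_{\mathbb{R}}|F(\xi)|^2\Omega(\xi)\,d\xi$ with $\Omega(\xi)=\sum_n|\xi|^{2n}/M_n^2$, so everything hinges on the pointwise two-sided bound $\Omega\asymp\omega^2$. The paper proceeds differently: it constructs an auxiliary even entire weight $\varpi$ from a two-parameter Mittag-Leffler function (first only for $R=1$, $\gamma<0$, then extends by scaling and differentiation), squares it as a Cauchy product, and estimates $A_n=\sum_k a_ka_{n-k}$ by a discrete Laplace lemma; your route treats all $R>0$, $\gamma\in\mathbb{R}$ at once and avoids the auxiliary weight. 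Your saddle-point heuristics are also correct: with $\tau=R|\xi|^{1/s}$ the terms peak at $n_*\approx\tau/s$ with maximal term $\asymp e^{2\tau}\tau^{2s\gamma-1/2}$, curvature $\asymp s^2/\tau$, hence Gaussian width $\asymp\sqrt{\tau}$ and $\Omega(\xi)\asymp e^{2\tau}\tau^{2s\gamma}\asymp\omega(\xi)^2$.

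The genuine gap is in the step you propose to make this uniform: the claim that log-convexity ``bounds the total sum by a fixed multiple of its largest term, uniformly in $\tau$'' is false, and it contradicts your own Gaussian summation. Since roughly $\sqrt{\tau}/s$ consecutive terms lie within a constant factor of the maximal one, you have $\Omega(\xi)\asymp\sqrt{\tau}\,\max_n t_n$, not $\Omega(\xi)\asymp\max_n t_n$; the discrepancy $\sqrt{\tau}=\sqrt{R}\,|\xi|^{1/(2s)}$ is exactly the polynomial factor that the exponent $-s\gamma-1/4$ in $M_n$ is calibrated to absorb. If you argue with ``sum $\asymp$ max'' you end up proving the equivalence with the weight $|\xi|^{2\gamma-1/(2s)}e^{2R|\xi|^{1/s}}$ rather than $\omega^2$, and the theorem as stated (with this precise $M_n$) does not follow. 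Moreover, the ratio $t_{n+1}/t_n\approx(\tau/(ns))^{2s}$ is only bounded away from $1$ at distance of order $n_*$ from the peak, not of order $\sqrt{\tau}$, so unimodality alone does not even give the tail control you want at the Gaussian scale. What is missing is a genuinely uniform discrete Laplace estimate with matching lower bound — e.g., comparing $\sum_n e^{\ell(n)}$ with $\int e^{\ell(x)}\,dx$ with quantified error, or a Poisson-summation/theta-function argument; this is precisely the content of the paper's Lemma on the discrete Laplace method and its Appendix B proof. With such an ingredient in place of your last paragraph, your argument closes and yields the theorem in full generality.
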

\begin{Remark}
As will be clear from the proof of Theorem \ref{theo:plancherel_gevrey}, there exists a sequence $(\theta_n)$ of positive numbers satisfying
\[
\exists 0 < \underline{\theta} \leq \overline{\theta} < \infty,\quad \forall n \in \mathbb{N},\quad \underline{\theta} \leq \theta_n \leq \overline{\theta},
\]
and such that if we replace $M_n$ by $\theta_n M_n$ in the definition of $\mathcal{G}_{s,R,\gamma}$, then the Fourier transform is an isometry $\mathcal{G}_{s,R,\gamma} \rightarrow \hat{\mathcal{G}}_{s,R,\gamma}$. This fact will however not be used, and it will be more convenient for us to work with $M_n$ given in \eqref{eq:def_G_sRgamma_Mn}. 
\end{Remark}
To our knowledge, Theorem \ref{theo:plancherel_gevrey} is new even for $\gamma=0$. Indeed, what is well-established is that a function $\varphi \in C_c^\infty(\mathbb{R})$ such that
\[
\exists R ,C > 0,\quad \forall k \in \mathbb{N},\quad \forall x \in \mathbb{R},\quad |\varphi^{(k)}(x)| \leq C \frac{(ns)!}{R^{ns}},
\]
will have a Fourier transform growing like
\[
\exists L,C' > 0,\quad \forall \xi \in \mathbb{R},\quad |\mathcal{F}\varphi(\xi)| \leq C' e^{-L|\xi|^{1/s}},
\]
and the converse also holds (see \textit{e.g.} \cite[Chapter IV]{gs2},\cite[Theorem 1.6.1]{Rodino}). In the previous works we are aware of, a link between $R$ and $L$ can be made explicit, but it is never sharp.
\subsection{Consequences for the interpolation problem in Gevrey-$2$ class}
We now consider the interpolation problem in Gevrey classes\footnote{We refer to \cite{bilodeau} for a historical survey. For more results on the interpolation problem, notably on more general Denjoy-Carleman classes, see \cite{wahde, petzsche, kiro}.} of order $s > 1$. For fixed $s > 1$, we consider a sequence $(a_n)$ of complex numbers such that
\begin{equation}\label{eq:sequence_gevrey}
    \exists R_0,C > 0,\quad \forall n \in \mathbb{N},\quad |a_n| \leq C \frac{(ns)!}{R_0^{ns}}.
\end{equation}
We say that a function $\varphi \in C^\infty(\mathbb{R})$ interpolates $(a_n)$ (at $t=0$) if $\varphi^{(n)}(0) = a_n$ for every $n$. From Borel's Theorem\footnote{Borel published this result in 1895 within his thesis \cite{Bobo}, although it seems to have been proved first by Peano in 1884, see \cite{borel}.} we know that such a function $\varphi$ exists, regardless of the growth hypothesis made on $(a_n)$. In \cite{carleson} Carleson studies the universal moment problem and establishes a sufficient condition for it to be solved. The moment problem is connected with the interpolation problem through the formula 
\[
\varphi^{(n)}(0) = \frac{1}{\sqrt{2 \pi}} \int_\mathbb{R} \xi^n \mathcal{F}\varphi(-\xi) d\xi,\quad \mathcal{F}\varphi(\xi) := \frac{1}{\sqrt{2 \pi}} \int_\mathbb{R} e^{-it\xi} \varphi(t) dt.
\]
As a consequence of his result on the moment problem, Carleson obtains that one may always interpolate a sequence $(a_n)$ satisfying \eqref{eq:sequence_gevrey} by a function $\varphi$ which is Gevrey of order $s$, \textit{i.e.} satisfying 
\begin{equation}\label{eq:interpolate_gevrey}
    \exists R_1,C' > 0,\quad \forall n \in \mathbb{N},\quad \forall t \in \mathbb{R},\quad  |\varphi^{(n)}(t)| \leq C' \frac{(ns)!}{R_1^{ns}}.
\end{equation}
For the applications in control theory we have in mind \cite{reachable_martin,rosier, laurent}, understanding how the parameter $R_1$ in \eqref{eq:interpolate_gevrey} depends on the parameter $R_0$ in \eqref{eq:sequence_gevrey} is crucial in order to obtain sharp reachability results. Around the same year Carleson's results are published, Mitjagin \cite{mitjagin} independently shows that one can always take $R_1 = \rho_s R_0 - \epsilon$, where $\rho_s:= \cos(\pi/2s) < 1$ and $\epsilon$ is any prescribed positive number. In \cite{mitjagin}, the interpolating function $\varphi$ is defined on the smaller interval $[-1,1]$, which is not essential, as can be seen by cut-off arguments, see \cite[Lemma 3.7]{reachable_martin}.  We shall refer to the numerical constant $\rho_s$ as the loss factor of the interpolation problem, as it prescribes how one should relax the growth assumptions when passing from \eqref{eq:sequence_gevrey} to \eqref{eq:interpolate_gevrey}. The constant $\rho_s$ was further shown by Mitjagin to be sharp in the sense that for every $\epsilon , R_0 > 0$ and $s > 1$, there exists a sequence $(a_n)$ satisfying \eqref{eq:sequence_gevrey} which can be interpolated by no function $\varphi$ satisfying \eqref{eq:interpolate_gevrey} with $R_1 \geq \rho_s R_0 + \epsilon$. Mitjagin's proof is quite elegant, and the optimality of the constant $\rho_s$ is deduced from its optimality in a Phragmén-Lindelöf principle. The paper \cite{mitjagin} was originally written in Russian, and its translation is rather hard to access. Furthermore, in view of its size (4 pages), the published version seems to offer only a sketch of the proof rather than a fully detailed demonstration. Hence, for the sake of completeness, we give in Appendix \ref{subsec:mit} a detailed proof of Mitjagin's solution of the interpolation problem. 

As a consequence of our study of the trackable subspace $\mathcal{Y}(0,T)$ we can make more precise the relation between $(a_n)$ and $\varphi$, when $s=2$. The following result characterizes the range of the Borel operator $\mathcal{B}_{t_0}$, defined on smooth functions by $\mathcal{B}_{t_0}\varphi := (\varphi^{(n)}(t_0))_{n=0}^\infty$, over Hilbert spaces of Gevrey functions. We will use the following notation: for an interval $I \subset \mathbb{R}$ and parameters $s,R > 0$ and $\gamma \in \mathbb{R}$, we let $\mathcal{G}_{s,R,p}(I)$ be the Hilbert space defined similarly as in \eqref{eq:def_G_sRgamma_Mn}, with $L^2(I)$ in place of  $L^2(\mathbb{R})$.
\begin{Proposition}\label{prop:borel_gevrey}
Let $I \subset \mathbb{R}$ be a non trivial interval and $t_0$ lying in its interior. Let also $R > 0$ and $p \in \mathbb{Z}$. Then 
\[
\mathcal{B}_{t_0} \mathcal{G}_{2,R,p}(I) = \left\lbrace (a_k): \sum^\infty a_{k+p} \frac{(\sqrt{2}R\zeta)^{2k}}{(2k)!} \in A^2(\Omega) \right\rbrace.
\]
and 
\[
\mathcal{B}_{t_0} \mathcal{G}_{2,R,p-1/2}(I) = \left\lbrace (a_k): \sum^\infty a_{k+p} \frac{(\sqrt{2}R\zeta)^{2k+1}}{(2k+1)!} \in A^2(\Omega) \right\rbrace.
\]
\end{Proposition}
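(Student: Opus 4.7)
I would follow the Fourier pipeline used for Theorem~\ref{theo:charac_Y_Tinfty}: transport the Gevrey condition on $\varphi$ to a weighted $L^2$ condition on $\hat\varphi$ via Theorem~\ref{theo:plancherel_gevrey}, rewrite the generating power series as an explicit integral transform of $\hat\varphi$, and identify that transform with the inclusion of the even (resp.\ odd) part of the Bergman space $A^2(\Omega)$.

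The first step is to reduce to $I=\mathbb{R}$ and $t_0=0$. A translation handles $t_0$. For the interval, I would multiply $\varphi\in\mathcal{G}_{2,R,p}(I)$ by a Gevrey-$s'$ cutoff with $1<s'<2$, equal to one near $0$ and supported in $I$; a Leibniz estimate shows that such a multiplier acts boundedly on $\mathcal{G}_{2,R,p}$ with the same radius $R$ (the $(ks')!/r^{ks'}$ growth of the cutoff's derivatives is absorbed by the much faster weights $M_n$), producing an element of $\mathcal{G}_{2,R,p}(\mathbb{R})$ with the same Borel coefficients at $0$. The reverse inclusion via restriction is immediate, and the same argument handles the half-integer case.

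On $\mathbb{R}$, Theorem~\ref{theo:plancherel_gevrey} identifies $\mathcal{G}_{2,R,p}(\mathbb{R})$ with the weighted $L^2$-space $L^2\!\bigl(\mathbb{R},(1+|\xi|)^{2p}e^{2R\sqrt{|\xi|}}\,d\xi\bigr)$ through the Fourier transform. The exponential weight makes the moments $a_k=(2\pi)^{-1/2}\int(i\xi)^k\hat\varphi(\xi)\,d\xi$ absolutely convergent, so a Fubini argument gives the integral representation
\[
F(\zeta):=\sum_{k\geq 0}a_{k+p}\frac{(\sqrt{2}R\zeta)^{2k}}{(2k)!}=\frac{1}{\sqrt{2\pi}}\int_\mathbb{R}(i\xi)^p\hat\varphi(\xi)\cosh\!\bigl(R\zeta\sqrt{2i\xi}\,\bigr)\,d\xi,
\]
together with the analogous $\sinh$-identity in the $\gamma=p-1/2$ case. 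It remains to show that the map $\mathcal{C}:\hat\varphi\mapsto F$ is a topological isomorphism from the above weighted $L^2$-space onto the even (resp.\ odd) part of $A^2(\Omega)$.

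The key computation for this last step exploits the change of coordinates $u=a-b$, $v=a+b$: it turns the tilted square $\Omega$ into the axis-aligned square $(-1,1)^2$ (with $da\,db=\tfrac12\,du\,dv$), and a short calculation yields $R\zeta\sqrt{2i\xi}=R\sqrt{|\xi|}(u+iv)$ for $\xi>0$ and $R\sqrt{|\xi|}(v-iu)$ for $\xi<0$. Setting $\tau=R\sqrt{|\xi|}$, the integrand of $F$ decomposes into four directional exponentials $e^{\pm\tau(u+iv)}$ and $e^{\pm i\tau(u+iv)}$ tested against $\hat\varphi$ on the positive and negative half-lines, and $\|F\|_{A^2(\Omega)}^2$ becomes a sum of four classical Paley--Wiener/Hardy integrals on $(-1,1)^2$, which after reverting the substitution should match $\int|\hat\varphi|^2(1+|\xi|)^{2p}e^{2R\sqrt{|\xi|}}\,d\xi$ up to equivalence of norms. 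The main obstacle is precisely carrying out this matching: the four directional exponentials are not orthogonal in $A^2((-1,1)^2)$, so the off-diagonal cross terms need to be controlled -- essentially a two-dimensional Paley--Wiener theorem on a square rather than on a half-plane or a strip -- and the polynomial factor $(1+|\xi|)^{2p}$ emerges only after carefully tracking the small- and large-$\tau$ asymptotics of each directional piece.
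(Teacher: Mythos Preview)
Your reduction to $I=\mathbb{R}$, $t_0=0$ via Gevrey-$s'$ cutoffs is fine and matches what the paper does. Your integral representation of $F$ in terms of $\hat\varphi$ is also correct. But from that point on you diverge sharply from the paper, and the divergence leaves a real gap.

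The paper does \emph{not} attempt to compute $\|F\|_{A^2(\Omega)}$ directly. Instead it exploits the control-theoretic results already established: for the special values $R=1/\sqrt{2}$, $p=0$ it identifies $\mathcal{G}_{2,1/\sqrt{2},0}(0,\infty)\cap C^\infty_{(0)}[0,\infty)$ with the trackable subspace $\mathcal{Y}_{\opDir-\opDir}(0,\infty)$, and the even part of $A^2(\Omega)$ with the reachable space $\mathcal{R}_{\opDir-\opDir}$. The inclusion ``$\subset$'' then follows from Corollary~\ref{coro:loc_tracable} (a trackable output produces, at any interior time $t_0$, a reachable state whose Taylor coefficients are exactly the $a_k$), and the reverse inclusion is obtained by taking a control that reaches the prescribed state at time $t_0$, extending it by zero, and reading off the output. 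General $R>0$ and $p\in\mathbb{Z}$ are then obtained by scaling and shifting the sequence $(a_k)$, and the half-integer case $\gamma=p-1/2$ is handled by running the same argument with the Dirichlet-to-Neumann system. In particular, the surjectivity of your map $\mathcal{C}$ onto even $A^2(\Omega)$ is, in the paper's logic, precisely the statement that $\mathcal{R}_{\opDir-\opDir}$ equals the even Bergman space, a deep fact taken from \cite{hartmann_orsoni,reachable_heat}.

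Your plan tries to bypass all of this with a direct norm computation, but the ``main obstacle'' you identify is not a technicality: the two-variable Paley--Wiener-type identity on the square that would give $\|F\|_{A^2(\Omega)}^2\asymp\int|\hat\varphi|^2(1+|\xi|)^{2p}e^{2R\sqrt{|\xi|}}\,d\xi$ is essentially equivalent in difficulty to the reachable-space characterization itself. You have not indicated how to control the off-diagonal pairings of the four directional exponentials (they are not small, and their interference is exactly what encodes the geometry of the square), nor how the factor $(1+|\xi|)^{2p}$ would emerge, nor how surjectivity onto even $A^2(\Omega)$ would follow even if the norm equivalence were established. As written, the proposal stops at the point where the genuine work begins; the paper's route is short precisely because it outsources that work to the reachable-space literature.
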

In the above, $A^2(\Omega)$ stands for the Bergman space on $\Omega$, that is the set of functions which are holomorphic and square integrable on $\Omega$. 

For an arbitrary sequence $(a_n)$, we deduce a formula for the largest $R > 0$ such that $(a_n)$ can be interpolated by a function $\varphi$ belonging to the set $G^{2,R}[-1,1]$, which is defined similarly as in \eqref{eq:gevrey} with $[-1,1]$ in place of $[0,T]$. To state the result we introduce the notation
\[
R_a:=  \sup \{ R > 0: \exists \varphi \in G^{2,R}[-1,1],\quad \forall n \in \mathbb{N},\quad \varphi^{(n)}(0) = a_n \},
\]
where it is understood that $R_a = 0$ if the above supremum runs over the empty set.
\begin{Corollary}\label{coro:individual_loss}
Let $a=(a_k)$ be an arbitrary sequence. Then 
\[
R_a = \sup \left\lbrace R \geq 0: \sum_{k=0}^\infty a_k \frac{(\sqrt{2} R \zeta)^{2k}}{(2k)!} \in A^2(\Omega) \right\rbrace.
\]
\end{Corollary}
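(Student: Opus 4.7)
The plan is to apply Proposition \ref{prop:borel_gevrey} with $s=2$, $p=0$, $t_0=0$ and $I = [-1,1]$. This gives
$$
\mathcal{B}_0\,\mathcal{G}_{2,R,0}([-1,1]) \;=\; \Bigl\{(a_k) : \sum_{k=0}^\infty a_k \frac{(\sqrt{2}\,R\,\zeta)^{2k}}{(2k)!} \in A^2(\Omega) \Bigr\},
$$
which is exactly the right-hand side of the corollary at the single radius $R$. The remaining task is to reconcile the Hilbert Gevrey space $\mathcal{G}_{2,R,0}([-1,1])$ with the classical space $G^{2,R}[-1,1]$ appearing in the definition of $R_a$, and then to conclude by a supremum argument.

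The bridge I would establish is the sandwich
$$
G^{2,R}[-1,1] \;\subset\; \mathcal{G}_{2,R',0}([-1,1]) \;\subset\; G^{2,R''}[-1,1], \qquad 0 < R'' < R' < R.
$$
The left inclusion is immediate from $\|\varphi^{(n)}\|_{L^2(-1,1)} \leq \sqrt{2}\,\|\varphi^{(n)}\|_{L^\infty(-1,1)}$ together with the convergence of the geometric series $\sum_n (R'/R)^{4n}(n+1)^{1/2}$. For the right inclusion, the one-dimensional Sobolev embedding $\|f\|_{L^\infty(-1,1)} \lesssim \|f\|_{L^2(-1,1)} + \|f'\|_{L^2(-1,1)}$ applied to $f = \varphi^{(n)}$ gives $\|\varphi^{(n)}\|_{L^\infty} \lesssim (2n)!/(R')^{2n}\,(n+1)^{7/4}$, and any strict shrink $R'' < R'$ absorbs the polynomial factor.

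I would then record that both sets $\{R > 0 : \exists \varphi \in G^{2,R}[-1,1] \text{ with } \varphi^{(n)}(0)=a_n\; \forall n\}$ and $S(a) := \{R \geq 0 : \sum_k a_k (\sqrt{2}R\zeta)^{2k}/(2k)! \in A^2(\Omega)\}$ are lower intervals --- for $S(a)$, because the series at scale $R$ is the series at scale $R'$ evaluated at $R\zeta/R'$, a map sending $\Omega$ into itself whenever $R \leq R'$ and preserving both holomorphy and $L^2$-integrability. The equality $R_a = \sup S(a)$ then follows by a standard sup argument: for $R < R_a$, some $R^{\ast} > R$ admits an interpolating $\varphi \in G^{2,R^{\ast}}[-1,1]$, the sandwich places $\varphi$ in $\mathcal{G}_{2,R,0}([-1,1])$, and Proposition \ref{prop:borel_gevrey} yields $R \in S(a)$; conversely, for $R < \sup S(a)$, Proposition \ref{prop:borel_gevrey} provides an interpolating $\varphi \in \mathcal{G}_{2,R,0}([-1,1])$, which the sandwich then places in $G^{2,R''}[-1,1]$ for every $R'' < R$. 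The only mildly delicate point is the polynomial factor $(n+1)^{7/4}$ arising in the Sobolev step, but since the equality is formulated as a supremum this loss is absorbed by any strict shrink of the radius and poses no genuine obstacle.
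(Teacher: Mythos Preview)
Your proposal is correct and follows essentially the same route as the paper: invoke Proposition~\ref{prop:borel_gevrey} at each fixed radius, then show that the $G^{2,R}$ and $\mathcal{G}_{2,R,0}$ scales are nested into one another with arbitrarily small loss in the radius, so that the two suprema coincide. The only difference is in how the inclusion $\mathcal{G}_{2,R',0}([-1,1]) \subset G^{2,R''}[-1,1]$ is obtained: the paper first multiplies $\varphi$ by a Gevrey cutoff (via \cite[Lemma~3.7]{reachable_martin}) to make it flat at $\pm 1$ and then applies the Poincar\'e inequality, whereas you go directly through the Sobolev embedding $H^1(-1,1)\hookrightarrow L^\infty(-1,1)$; your route is slightly more direct and avoids the cutoff, while the paper's route keeps the constant in the $L^\infty$ bound marginally tighter, but since both losses are absorbed by the strict shrink $R''<R'$ this is immaterial.
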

The rest of the paper is organized as follows. In Section 2 we show our tracking results Theorems \ref{theo:charac_Y_Tinfty} and \ref{theo:Y_finite_T}, as well as the intermediate results Proposition \ref{prop:hardy} and Theorem \ref{theo:plancherel_gevrey}, and finally Proposition \ref{prop:borel_LR}. In Section 3 we explain how our methods can be applied to systems close to \eqref{eq:heat}, having different boundary conditions or smoother control laws, or where the output is given by the Dirichlet trace at a given point of $(0,1)$. In Section 4 we show Proposition \ref{prop:borel_gevrey} and Corollary \ref{coro:individual_loss}. In Appendix \ref{app} we study the interpolation problem in Gevrey classes of order $s > 1$, notably by providing a detailed proof Mitjagin's \cite[Theorem 1 and 1a]{mitjagin}.

\paragraph*{Acknowledgements}
The authors are deeply indebted to Oliver Glass for many valuable discussions and insightful suggestions. Lucas Davron thanks Sylvain Ervedoza for fruitful discussions and having suggested to use the Fourier-Laplace transform. The authors also thank Swann Marx for reading a preliminary version of this paper. Pierre Lissy was supported by the french ``Agence Nationale de la Recherche'' under the grant ANR-22-CPJ2-0138-01.
\section{Exact tracking}\label{sec:tracking}
To show Proposition \ref{prop:hardy} we will rely on the theory of boundary values of holomorphic functions, which we recall below. 
\subsection{Boundary values of holomorphic functions}\label{subsec:fine_hardy} 	
Let us recall some well-known facts on holomorphic functions in the open right half-plane.
\begin{Definition}\label{def:H2}
Let $p\in [1,+\infty[$. A function $F \in \mathcal{H}(\mathbb{C}_+)$ is said to belong to $\mathcal{H}^p(\mathbb{C}_+)$ if 
\[
\| F \|_{\mathcal{H}^p(\mathbb{C}_+)}^p:= \sup_{x > 0} \int_\mathbb{R} |F(x+i\xi)|^p d\xi < \infty.
\]
A function $F \in \mathcal{H}(\mathbb{C}_+)$ is said to belong to $\mathcal{H}^\infty(\mathbb{C}_+)$ if 
\[
\| F \|_{\mathcal{H}^\infty(\mathbb{C}_+)}:= \sup_{z \in \mathbb{C}_+}  |F(z)|< \infty.
\]
\end{Definition}
The above definition naturally yields a Banach space for any $p\in [1,+\infty]$. The following result is due to Paley and Wiener. In particular, it implies that $\mathcal{H}^2(\mathbb{C}_+)$ is also a Hilbert space.
\begin{Theorem}{\cite[Theorem 19.2]{rudin}} \label{theo:paley_wiener_isom}
Consider the Laplace transform 
\[
\mathcal{L}: L^2(0,\infty) \rightarrow \mathcal{H}^2(\mathbb{C}_+),\quad \mathcal{L}f(s) = \int_0^\infty e^{-st}f(t)dt.
\]
Then the operator $\mathcal{L}/(2\pi)$ is a surjective isometry.
\end{Theorem}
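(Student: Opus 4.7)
The statement is a form of the classical Paley--Wiener theorem, and the plan splits naturally into an isometry half and a surjectivity half, both hinging on Plancherel's theorem for the Fourier transform on $\mathbb{R}$.

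For the isometry, given $f \in L^2(0,\infty)$ extended by zero to $\mathbb{R}$, I would observe that for each $x>0$ one has $F(x+i\xi) = \mathcal{F}[g_x](\xi)$, where $g_x(t) := e^{-xt} f(t) \mathbf{1}_{(0,\infty)}(t) \in L^2(\mathbb{R})$. Plancherel then yields
\[
\int_\mathbb{R} |F(x+i\xi)|^2 \, d\xi \;=\; 2\pi \int_0^\infty e^{-2xt}|f(t)|^2 \, dt,
\]
up to a constant dictated by the Fourier convention. The right-hand side is monotone increasing as $x \downarrow 0$ and tends to $2\pi \|f\|_{L^2}^2$ by monotone convergence, which identifies $\|\mathcal{L} f\|_{\mathcal{H}^2}^2$ with a fixed multiple of $\|f\|_{L^2}^2$, hence gives the isometry up to the scaling specified in the statement.

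For the surjectivity, the plan is to invert the computation above. Given $F \in \mathcal{H}^2(\mathbb{C}_+)$, I would define, for each $x>0$, the $L^2$ function $h_x := \mathcal{F}^{-1}[F(x+i\cdot)]$. Plancherel already supplies a uniform bound $\|h_x\|_{L^2} \lesssim \|F\|_{\mathcal{H}^2}$. The heart of the argument is the claim that $h_x(t) = e^{-xt} h(t)$ for some single function $h$ independent of $x$; I would obtain this by applying Cauchy's theorem to $F(z) e^{zt}$ on rectangles $\{x_1 \le \Re z \le x_2,\; |\Im z| \le N\}$, thereby matching the integrals on the two vertical sides and recognising them, up to the factor $e^{x_j t}$, as $2\pi\, h_{x_j}(t)$.

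The main obstacle is the passage $N \to \infty$: the horizontal contributions must be shown to vanish, while the hypothesis only provides a uniform $L^2$ bound on vertical slices, not pointwise decay at infinity. I would handle this by a mean-value argument, integrating $|F|$ over strips $|\Im z| \in [N, 2N]$ and using the uniform $L^2$ bound to extract a subsequence $N_k \to \infty$ along which $F(\sigma \pm iN_k) \to 0$ in $L^2(x_1,x_2;\, d\sigma)$, an idea already present in Rudin's proof. Once $h_x(t) = e^{-xt} h(t)$ is known, the uniform bound $\int_\mathbb{R} e^{-2xt}|h(t)|^2 \, dt \lesssim \|F\|_{\mathcal{H}^2}^2$ valid for every $x>0$, combined with letting $x \to +\infty$, forces $h(t) = 0$ for a.e.\ $t<0$. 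Hence $h \in L^2(0,\infty)$ and, by reversing the initial Plancherel computation, $F = \mathcal{L} h$, which yields the surjectivity and finishes the proof.
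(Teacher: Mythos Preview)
The paper does not supply its own proof of this theorem: it is simply quoted with the citation \cite[Theorem 19.2]{rudin}, so there is nothing to compare against beyond Rudin's original argument. Your sketch is correct and is essentially Rudin's proof---Plancherel on vertical lines for the isometry, then Cauchy's theorem on rectangles together with a mean-value selection of heights $N_k$ (to kill the horizontal contributions) for the surjectivity, followed by the $x\to+\infty$ limit to force the preimage to vanish on $(-\infty,0)$.

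One small remark: your computation gives $\|\mathcal{L}f\|_{\mathcal{H}^2}^2 = 2\pi\,\|f\|_{L^2}^2$, so the isometry constant is $\sqrt{2\pi}$ rather than $2\pi$; the factor in the paper's statement appears to be a typo, and you were right to hedge on the normalisation.
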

\begin{Definition}
Let $F : \mathbb{C}_+ \rightarrow \mathbb{C}$ holomorphic and $f : \mathbb{R} \rightarrow \mathbb{C}$ measurable. We say that $F$ has the non tangential (or angular) boundary value $f$ if there exists a constant $0 < \alpha < 1$ and a subset $E \subset \mathbb{R}$ of positive measure such that 
\[
\forall \xi \in E,\quad F(z) \xrightarrow[z \rightarrow i\xi]{z \in \Gamma_\alpha(i\xi)} f(\xi). 
\]
\end{Definition}
In the above, we have used the notation 
\[
\Gamma_\alpha(i\xi) = i\xi + \Gamma_\alpha,\quad \Gamma_\alpha = \{ x+i \tau \in \mathbb{C}_+: |\tau| < \alpha x \}.
\]
The set $E$ is authorized to merely be of positive measure from Luzin-Privalov's uniqueness theorem, which is stated below. This result, as well as Fatou's Theorem \ref{theo:Fatou} below, is usually proved first for functions holomorphic on the unit disc and then exported to other geometries by conformal mapping. 
\begin{Theorem}{\cite[III.D]{koosis}}
Let $F \in \mathcal{H}(\mathbb{C}_+)$, and assume that $F$ has angular boundary value zero. Then $F$ is identically zero on $\mathbb{C}_+$. 
\end{Theorem}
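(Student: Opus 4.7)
My plan is to reduce the statement to the classical Fatou--Jensen uniqueness principle for bounded holomorphic functions, by transferring to the unit disc and localizing to a sub-domain where $F$ becomes bounded. The Cayley transform $\mathbb{D} \to \mathbb{C}_+$ is a biholomorphism which sends Stolz cones at a boundary point of $\partial \mathbb{D}$ to the analogous Stolz cones at the corresponding point of $i\mathbb{R}$, so it suffices to prove the analogous statement on the disc: if $G \in \mathcal{H}(\mathbb{D})$ has angular boundary value $0$ on a set $E \subset \partial \mathbb{D}$ of positive Lebesgue measure, then $G \equiv 0$.

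The central obstacle is that $G$ is not assumed to belong to any Hardy class, so the standard argument via $\log|G^*| \in L^1$ is not directly available. To overcome this, I first apply Egorov's theorem to extract $E_0 \subset E$ of positive measure on which the non-tangential convergence to $0$ is uniform inside a common Stolz aperture $\alpha$. Then for each integer $N \geq 1$, the set
\[
E_N := \{ \xi \in E_0 : |G(z)| \leq N \text{ for every } z \in \Gamma_\alpha(\xi) \cap \mathbb{D} \}
\]
is measurable, and $E_0 = \bigcup_N E_N$, so some $E_N$ has positive measure. I then build a sawtooth-type Jordan sub-domain $U \subset \mathbb{D}$ as the union of slightly truncated Stolz cones based at the points of $E_N$; this $U$ has Lipschitz boundary, $|G| \leq N$ throughout $U$, and $E_N$ sits on the circular portion of $\partial U$, where the angular limit of $G$ vanishes.

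Finally, on the Lipschitz Jordan domain $U$ the bounded holomorphic function $G$ admits a non-tangential boundary trace $G^* \in L^\infty(\partial U)$. If $G \not\equiv 0$, the subharmonicity and upper-boundedness of $\log |G|$ yield $\log|G^*| \in L^1(d\omega_U)$, where $\omega_U$ denotes harmonic measure on $\partial U$; and since harmonic measure on a Lipschitz domain is mutually absolutely continuous with surface measure, $E_N$ has positive $\omega_U$-measure. But $G^* = 0$ on $E_N$ forces $\log|G^*| = -\infty$ there, contradicting integrability. Hence $G \equiv 0$ on $U$, and the identity principle extends this to all of $\mathbb{D}$, which translates back to $F \equiv 0$ on $\mathbb{C}_+$. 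The hardest step is arguably the geometric construction and harmonic analysis of the sawtooth domain $U$: one must verify that it really is a Jordan region with boundary regular enough for the Fatou--Jensen integrability bound to apply and for $E_N$ to carry positive harmonic measure.
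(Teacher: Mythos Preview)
The paper does not give its own proof of this theorem: it is merely quoted as a classical result with a reference to Koosis. So there is no ``paper's proof'' to compare against; your proposal stands or falls on its own.

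Your outline is essentially the classical Luzin--Privalov argument and is sound in spirit. A couple of points deserve tightening. First, the Egorov step is slightly informal: non-tangential convergence is not literally a sequence of functions of $\xi$, so you should spell out that you apply Egorov to the measurable functions $g_n(\xi) := \sup\{|G(z)| : z \in \Gamma_\alpha(\xi),\, |z-\xi|<1/n\}$, which tend to $0$ pointwise on $E$. Once you have uniform convergence on $E_0$, the decomposition $E_0 = \bigcup_N E_N$ is in fact unnecessary: uniformity gives a single $\delta_0$ with $|G|\le 1$ on the portions of the cones within $\delta_0$ of the boundary, while the remaining portions of \emph{all} the cones lie in the fixed compact set $\{|z|\le 1-\delta_0/\alpha\}$, so $G$ is uniformly bounded on the whole sawtooth from the start.

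Second, and more substantively, the assertion that the sawtooth $U$ is a Jordan domain with Lipschitz boundary is too optimistic when $E_N$ is a general measurable set (think of a fat Cantor set: $\partial U$ need not be a Jordan curve). The standard remedy is to first pass, by inner regularity, to a \emph{compact} subset of $E_0$ of positive measure; the sawtooth over a compact set is then a starlike Lipschitz domain (indeed a chord--arc domain), on which the Fatou/Jensen machinery and the mutual absolute continuity of harmonic measure with arc length are available. With that adjustment your final contradiction---$\log|G^*|\in L^1(d\omega_U)$ versus $G^*=0$ on a set of positive $\omega_U$-measure---goes through, and the identity principle finishes the job.
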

We will frequently abuse notations and write $F(i\xi)$ for $f(\xi)$. Note also that the function $f$ may be defined up to a zero measure set. 

The next result is an existence theorem for boundary values, due to Fatou. 
\begin{Theorem}{\cite[Theorem 17.10]{rudin}}\label{theo:Fatou}
Let $F \in \mathcal{H}^2(\mathbb{C}_+)$ and $0 < \alpha  < 1$. Then for almost every $\xi \in \mathbb{R}$, the limit 
\[
\lim_{\Gamma_\alpha(i\xi) \ni z \rightarrow i\xi} F(z) =: F(i\xi),
\]
exists. The induced function $\xi \mapsto F(i\xi)$ is $L^2(\mathbb{R})$. 
\end{Theorem}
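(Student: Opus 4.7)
The plan is to reduce this statement to the classical Poisson-integral theorem on the real line. Since the theorem is in any case quoted from Rudin, my sketch follows the standard route for $\mathcal{H}^2$ of a half-plane rather than introducing anything new.

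First, using the Paley--Wiener isomorphism (Theorem \ref{theo:paley_wiener_isom}), I would write $F = \mathcal{L}g$ for some $g \in L^2(0,\infty)$, extended by zero to the negative half-line. An explicit computation with this integral representation then shows that for every $z = x + i\eta \in \mathbb{C}_+$,
\[
F(x + i\eta) = \int_{\mathbb{R}} P_x(\eta - \tau)\, f(\tau)\, d\tau,
\]
where $P_x(t) = x/(\pi(x^2+t^2))$ is the Poisson kernel of the half-plane and $f$ is, up to normalization, the Fourier transform of $g$. By Plancherel's theorem $f \in L^2(\mathbb{R})$, which already handles the second assertion of the theorem and identifies the candidate boundary trace. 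It only remains to show that $F(z) \to f(\xi)$ non-tangentially at almost every $\xi$.

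Second, I would establish the non-tangential convergence by the usual maximal-function argument. The key estimate is
\[
\sup_{z \in \Gamma_\alpha(i\xi)} |F(z)| \;\lesssim_\alpha\; (Mf)(\xi),
\]
where $M$ denotes the Hardy--Littlewood maximal operator. Since $M$ is bounded on $L^2(\mathbb{R})$, and since for a test function $f \in C_c^\infty(\mathbb{R})$ the non-tangential convergence $P_x * f \to f$ is elementary (approximate identity plus uniform continuity), a standard density argument in $L^2(\mathbb{R})$ yields a.e.\ non-tangential convergence for every $L^2$ boundary datum.

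The main obstacle is the non-tangential maximal inequality itself, which is the genuine harmonic-analytic content. It is proved by first controlling $P_x * f$ by its vertical counterpart — using that $P_x$ is even, unimodal and decreasing in $|t|$, so that tilting the vertex by an aperture $\alpha < 1$ only affects the bound by a constant depending on $\alpha$ — and then controlling the vertical maximal function by $Mf$ via integration against the distribution function of $P_x$. This is exactly the step Rudin reaches after transporting the question to the unit disc by the conformal map $\zeta \mapsto (1-\zeta)/(1+\zeta)$, so I would simply cite \cite[Theorem 17.10]{rudin} rather than reproduce the argument.
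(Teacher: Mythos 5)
The paper offers no proof of this statement at all: it is quoted verbatim from Rudin, and the surrounding text only remarks that such results are "usually proved first for functions holomorphic on the unit disc and then exported to other geometries by conformal mapping." Your sketch is correct, but it follows a genuinely different (equally classical) route: instead of transferring the disc version of Fatou's theorem through the Cayley map, you work directly in the half-plane, using the Paley--Wiener isomorphism to write $F=\mathcal{L}g$, identifying the candidate boundary trace $f$ as (essentially) $\mathcal{F}g\in L^2(\mathbb{R})$ by Plancherel, representing $F$ as the Poisson integral of $f$ (valid since $e^{-x|\cdot|}g\in L^1\cap L^2$), and then running the real-variable argument: the non-tangential maximal function of a Poisson integral is dominated by $C_\alpha\, Mf$ (your tilting estimate is exactly right, e.g.\ $x^2+(\xi-\tau)^2\leq (2+2\alpha^2)\bigl(x^2+(\eta-\tau)^2\bigr)$ on $\Gamma_\alpha(i\xi)$), and boundedness of $M$ on $L^2$ plus density of $C_c^\infty$ gives a.e.\ non-tangential convergence. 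What your approach buys is a self-contained half-plane proof tailored to the $\mathcal{H}^2$ setting, with the boundary function produced for free by Plancherel and no need to verify that the conformal map matches the $\mathcal{H}^2$ norms and carries cones to Stolz angles; what the disc-plus-conformal-map route (Rudin's, implicitly the paper's) buys is greater generality, since it treats Poisson integrals of measures and all $H^p$ classes at once, which is why the paper can simply cite it. Since the theorem is in any case invoked by citation, either justification is acceptable; your sketch contains no gap beyond the standard facts it explicitly defers to.
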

The next two results are also known as Paley--Wiener theorems. Here and throughout, we define the Fourier transform of a function $f$ by 
\[
\mathcal{F}f(\xi) := \frac{1}{\sqrt{2 \pi}} \int_\mathbb{R} e^{-ix\xi}f(x)dx.
\]
\begin{Theorem}{\cite[Theorem 19.3]{rudin}}
Let $f \in L^2(-A,A)$ for some $A > 0$. Then $\mathcal{F}f \in \mathcal{H}(\mathbb{C}) \cap L^2(\mathbb{R})$ satisfies
\[
\exists C > 0,\quad \forall z \in \mathbb{C},\quad |\mathcal{F}f(z)| \leq C e^{A|z|}.
\]
Conversely, if $\phi \in \mathcal{H}(\mathbb{C}) \cap L^2(\mathbb{R})$ satisfies the above estimate, then $\mathcal{F} \phi$ has support contained in $[-A,A]$.  
\end{Theorem}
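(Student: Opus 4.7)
The plan is to prove the two implications separately. For the direct part, I just estimate the defining integral and invoke Plancherel. For the converse, the idea is to reduce to the Paley--Wiener isomorphism for the Hardy space $\mathcal{H}^2$ recalled in Theorem~\ref{theo:paley_wiener_isom}.

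\textbf{Direct implication.} Given $f \in L^2(-A,A)$, set $\mathcal{F}f(\zeta) := (2\pi)^{-1/2}\int_{-A}^A e^{-ix\zeta}f(x)\,dx$ for $\zeta \in \mathbb{C}$. The compact support of $f$ ensures the integrand is absolutely integrable in $x$ for every $\zeta$, and holomorphy on all of $\mathbb{C}$ follows either from Morera's theorem (applied via Fubini to closed triangles) or from differentiation under the integral sign. Using $|e^{-ix\zeta}| = e^{x\opIm(\zeta)} \leq e^{A|\zeta|}$ for $x \in [-A,A]$ and Cauchy--Schwarz one gets $|\mathcal{F}f(\zeta)| \leq \sqrt{A/\pi}\,\|f\|_{L^2}e^{A|\zeta|}$, while Plancherel gives $\mathcal{F}f \in L^2(\mathbb{R})$.

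\textbf{Converse implication.} Suppose $\phi \in \mathcal{H}(\mathbb{C}) \cap L^2(\mathbb{R})$ satisfies $|\phi(\zeta)| \leq Ce^{A|\zeta|}$. To show that $\opsupp \mathcal{F}\phi \subset [-A,A]$, I would introduce the auxiliary functions $\psi_+(\zeta) := e^{iA\zeta}\phi(\zeta)$ on the upper half-plane and $\psi_-(\zeta) := e^{-iA\zeta}\phi(\zeta)$ on the lower half-plane. Their restrictions to $\mathbb{R}$ have the same modulus as $\phi$, hence lie in $L^2(\mathbb{R})$. If one can show that $\psi_\pm$ belong to the Hardy space $\mathcal{H}^2$ of the corresponding half-plane, then the half-plane analogue of Theorem~\ref{theo:paley_wiener_isom} (with Fourier instead of Laplace transform) implies that the Fourier transforms of the boundary values $e^{\pm iA\xi}\phi(\xi)$ are supported in $[0,+\infty)$ and $(-\infty,0]$ respectively, and translating back gives $\opsupp \mathcal{F}\phi \subset [-A,A]$.

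\textbf{Main obstacle.} The technical heart of the converse is precisely the Hardy-space membership of $\psi_\pm$. The naive pointwise bound $|\psi_+(\xi+i\eta)| \leq Ce^{-A\eta + A\sqrt{\xi^2+\eta^2}}$ is not square-integrable in $\xi$ on horizontal lines with fixed $\eta > 0$, because the growth depends on both coordinates. What is really needed is a Phragmén--Lindelöf-type principle that upgrades the hypothesis ``pointwise exponential growth of type $A$ plus $L^2$ control on $\mathbb{R}$'' into uniform $L^2$ control on horizontal lines, $\sup_{\eta > 0}\int_\mathbb{R}|\psi_+(\xi+i\eta)|^2d\xi < \infty$. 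This is typically achieved by applying the maximum principle to a suitable subharmonic majorant of $|\psi_+|^2$ in quadrants, combined with the $L^2$ datum on $\mathbb{R}$ at infinity. Once this uniform horizontal $L^2$ bound is secured, the remainder of the converse (extracting the non-tangential boundary values, applying the half-plane Paley--Wiener, and tracking the translation) is routine.
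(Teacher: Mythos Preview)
The paper does not give its own proof of this statement: it is quoted verbatim as \cite[Theorem 19.3]{rudin} and used as a black box. So there is nothing to compare your argument against within the paper itself.

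That said, your outline is the standard one and is essentially Rudin's proof. The direct implication is complete as you wrote it. For the converse, your reduction to showing $\psi_\pm \in \mathcal{H}^2$ of the appropriate half-plane is exactly the right move, and you have correctly isolated the only nontrivial step. The way this gap is closed in Rudin is not by a subharmonic majorant of $|\psi_+|^2$, but by an $\epsilon$-regularization: one multiplies by a factor such as $(1-i\epsilon z)^{-1}$ (or works with $e^{itz}\phi(z)$ for $t>A$ and lets $t\downarrow A$ at the end), so that the resulting function is genuinely bounded in each quadrant of the upper half-plane; then Phragm\'en--Lindel\"of on quadrants, combined with the $L^2$ information on $\mathbb{R}$, yields the uniform horizontal $L^2$ bound, and one passes to the limit $\epsilon\to 0$ by Fatou/dominated convergence. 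Once you insert that regularization step, your sketch becomes a complete proof.
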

\begin{Theorem}{\cite[Theorem 7.2]{Katznelson}}\label{theo:paley_wiener_Hardy}
Let $f \in L^2(\mathbb{R})$. Then the two following conditions are equivalent 
\begin{enumerate}
\item There exists a function $F \in \mathcal{H}^2(\mathbb{C}_+)$ such that 
\[
F(x+i\cdot) \xrightarrow[x \rightarrow 0^+]{L^2(\mathbb{R})} f,
\]
\item $\opsupp \mathcal{F}f \subset (-\infty,0]$
\end{enumerate}
\end{Theorem}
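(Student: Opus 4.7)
The plan is to reduce the equivalence to the Paley--Wiener isometry of Theorem~\ref{theo:paley_wiener_isom}, by exploiting the fact that the restriction of the Laplace transform to the imaginary axis essentially coincides with the Fourier transform of the zero-extended input. Concretely, for any $g \in L^2(0,\infty)$, setting $\tilde{g} := g\,\mathbf{1}_{[0,\infty)} \in L^2(\mathbb{R})$, one has for every $x \geq 0$,
\[
\mathcal{L}g(x+i\xi) = \int_\mathbb{R} e^{-(x+i\xi)t}\,\tilde{g}(t)\, dt = \sqrt{2\pi}\, \mathcal{F}\bigl(e^{-x\cdot}\tilde{g}\bigr)(\xi).
\]
Dominated convergence yields $e^{-x\cdot}\tilde{g} \to \tilde{g}$ in $L^2(\mathbb{R})$ as $x \to 0^+$, so by Plancherel $\mathcal{L}g(x+i\cdot) \to \sqrt{2\pi}\,\mathcal{F}\tilde{g}$ in $L^2(\mathbb{R})$. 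With the sign convention $\mathcal{F}\varphi(\xi) = (2\pi)^{-1/2}\int e^{-ix\xi}\varphi(x)\,dx$ fixed earlier in the excerpt, one has $\mathcal{F}^{-1}h(x) = \mathcal{F}h(-x)$, so the assumption $\opsupp \mathcal{F}f \subset (-\infty,0]$ is equivalent to $\opsupp \mathcal{F}^{-1}f \subset [0,\infty)$.

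For (1) $\Rightarrow$ (2), I would apply Theorem~\ref{theo:paley_wiener_isom} to write $F = \mathcal{L}g$ for some $g \in L^2(0,\infty)$. The observation above then forces $f = \sqrt{2\pi}\,\mathcal{F}\tilde{g}$, whence $\mathcal{F}^{-1}f = \sqrt{2\pi}\,\tilde{g}$ is supported in $[0,\infty)$; by the identification recalled above, this is exactly $\opsupp \mathcal{F}f \subset (-\infty,0]$.

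For (2) $\Rightarrow$ (1), I would set $\tilde{g} := (2\pi)^{-1/2}\,\mathcal{F}^{-1}f$, which by the support hypothesis belongs to $L^2(\mathbb{R})$ and vanishes on $(-\infty,0)$. Its restriction $g$ to $(0,\infty)$ thus lies in $L^2(0,\infty)$, and Theorem~\ref{theo:paley_wiener_isom} guarantees that $F := \mathcal{L}g$ belongs to $\mathcal{H}^2(\mathbb{C}_+)$. The computation performed in the first paragraph then yields $F(x+i\cdot) \to \sqrt{2\pi}\,\mathcal{F}\tilde{g} = f$ in $L^2(\mathbb{R})$ as $x \to 0^+$, as required.

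There is no genuine obstacle in this argument: once the Paley--Wiener isometry for the half-plane is at our disposal, the proof reduces to a Plancherel identity together with an elementary dominated-convergence step. The only mildly delicate point is the bookkeeping of orientations, namely that the side $(-\infty,0]$ versus $[0,\infty)$ on which $\mathcal{F}f$ is supported corresponds to working with $\mathcal{H}^2$ of the right versus the left half-plane, a matter entirely dictated by the Fourier-transform convention adopted in the excerpt.
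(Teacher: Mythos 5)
The paper does not prove this statement at all — it is quoted from Katznelson — so there is no internal proof to compare with; your argument is a correct, self-contained derivation from Theorem \ref{theo:paley_wiener_isom}, and it is essentially the classical proof. The reduction is sound: the identity $\mathcal{L}g(x+i\xi)=\sqrt{2\pi}\,\mathcal{F}\bigl(e^{-x\cdot}\tilde g\bigr)(\xi)$ (which you should assert only for $x>0$, since $F$ lives on the open half-plane and the integral need not converge absolutely at $x=0$), dominated convergence, Plancherel, and the reflection identity $\mathcal{F}^{-1}h(x)=\mathcal{F}h(-x)$ give both implications, with the side $(-\infty,0]$ correctly matched to the paper's convention $\mathcal{F}\varphi(\xi)=(2\pi)^{-1/2}\int e^{-it\xi}\varphi(t)\,dt$. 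Note also that you use Theorem \ref{theo:paley_wiener_isom} only through the fact that $\mathcal{L}$ maps $L^2(0,\infty)$ onto $\mathcal{H}^2(\mathbb{C}_+)$, so the normalization constant stated there plays no role in your argument.
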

It can be shown that the boundary value $f$ given by the previous theorem coincides with the one of Fatou's Theorem: $F(i\xi) = f(\xi)$ for almost every $\xi \in \mathbb{R}$. We will use angular boundary values rather than $L^2$ boundary values, as the former  are conveniently stable by multiplication. 

Combining Theorem \ref{theo:paley_wiener_Hardy} and Lusin-Privalov's uniqueness theorem, we obtain the following characterization of the holomorphic functions belonging to $\mathcal{H}^2(\mathbb{C}_+)$, from their boundary value. 
\begin{Proposition}\label{prop:PWP}
Let $F \in \mathcal{H}(\mathbb{C}_+)$ and $f : \mathbb{R} \rightarrow \mathbb{C}$ measurable, possibly defined up to a zero measure set. Assume that $F$ has angular boundary value $f$. Then $F \in \mathcal{H}^2(\mathbb{C}_+)$ if and only if $f$ satisfies both 
\[
f \in L^2(\mathbb{R}), \quad \text{and} \quad \opsupp \mathcal{F} f \subset (-\infty,0]. 
\]
\end{Proposition}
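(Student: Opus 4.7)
My plan is to treat the two implications separately, each using Theorem \ref{theo:paley_wiener_Hardy} as the workhorse, and to bridge the gap between angular boundary values (as in the hypothesis and in Luzin--Privalov) and $L^2$-norm boundary values (as in Theorem \ref{theo:paley_wiener_Hardy}) via the remark immediately preceding the proposition, which states that the two notions coincide almost everywhere for $\mathcal{H}^2$ functions.

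For the forward direction, suppose $F \in \mathcal{H}^2(\mathbb{C}_+)$. I would first invoke Fatou's theorem (Theorem \ref{theo:Fatou}) to produce an angular boundary value $\tilde f$ defined almost everywhere, which lies in $L^2(\mathbb{R})$. Pointwise uniqueness of limits forces $f = \tilde f$ almost everywhere on the set $E$ supplied by the hypothesis; since $f$ is only defined up to a null set, I identify $f$ with $\tilde f$, so $f \in L^2(\mathbb{R})$. Invoking the quoted fact that the angular boundary value of an $\mathcal{H}^2$ function coincides with its $L^2$ boundary value, Theorem \ref{theo:paley_wiener_Hardy} applied to $f$ then yields $\opsupp \mathcal{F} f \subset (-\infty,0]$.

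For the reverse direction, assume $f \in L^2(\mathbb{R})$ with $\opsupp \mathcal{F} f \subset (-\infty,0]$. Theorem \ref{theo:paley_wiener_Hardy} produces some $\tilde F \in \mathcal{H}^2(\mathbb{C}_+)$ whose $L^2$ boundary value is $f$, and by the same remark $\tilde F$ also has $f$ as its angular boundary value almost everywhere. I then consider $G := F - \tilde F \in \mathcal{H}(\mathbb{C}_+)$. On the intersection of $E$ (where $F$ has angular boundary value $f$, say with parameter $\alpha$) with the full-measure set where $\tilde F$ has angular boundary value $f$ for the same (or a smaller) $\alpha$, the function $G$ has angular boundary value zero; this intersection still has positive measure. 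Luzin--Privalov's uniqueness theorem then forces $G \equiv 0$, so $F = \tilde F \in \mathcal{H}^2(\mathbb{C}_+)$.

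The main obstacle I expect to navigate is the careful bookkeeping between three a priori distinct flavors of boundary value: the pointwise nontangential limit on a set of positive measure (as in the hypothesis and in Luzin--Privalov), Fatou's almost-everywhere nontangential limit for $\mathcal{H}^2$ functions, and the $L^2$-norm limit in Theorem \ref{theo:paley_wiener_Hardy}. Once the stated coincidence of these three is invoked, each direction reduces to a single application of Theorem \ref{theo:paley_wiener_Hardy}, with Luzin--Privalov supplying uniqueness in the converse.
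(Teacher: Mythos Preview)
Your proposal is correct and follows exactly the approach indicated by the paper, which does not give a detailed proof but merely states that the proposition is obtained by combining Theorem \ref{theo:paley_wiener_Hardy} with Luzin--Privalov's uniqueness theorem. Your careful distinction between the three notions of boundary value (positive-measure angular, almost-everywhere angular via Fatou, and $L^2$-norm) is precisely the bookkeeping the paper leaves implicit.
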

\subsection{Spectrum of entire functions}\label{sec:spectrum}
We are now in position to prove Proposition \ref{prop:hardy}. We will need the following definition.
\begin{Definition}{\cite[Chapter 2]{boas}} \label{order} 
The order $\rho$ of an entire non-constant function $f$ is defined as 
\[
\rho = \limsup_{r\to\infty}\frac{\log \log M_f(r) }{\log r},\quad M_f(r) := \max_{|z| \leq r} |f(z)|.
\]
If $\rho < \infty$, the type $A$ of $f$ is defined by 
\[
A = \limsup_{r\to\infty}\frac{\log M_f(r) }{r^\rho}.
\]
\end{Definition}
In particular, if both $\rho$ and $A$ are finite, we have 
\[
\forall \epsilon > 0,\quad \exists C > 0,\quad \forall z \in \mathbb{C},\quad |f(z)| \leq e^{(A + \epsilon)|z|^\rho},
\]
and $A$ is the best such constant. \newline
\newline
From Proposition \ref{prop:PWP}, Proposition \ref{prop:hardy} is a consequence of Lemma \ref{lem:spectrum} below. 
\begin{Lemma}\label{lem:spectrum}
Let $\phi$ be entire on $\mathbb{C}$ and of order $< 1$. Then for every $f \in L^2(\mathbb{R})$ such that $\phi f \in L^2(\mathbb{R})$, we have 
\[
\opsupp \mathcal{F}(\phi f) \subset \opsupp \mathcal{F}f.
\]
\end{Lemma}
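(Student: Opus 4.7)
The plan is to fix a point $\xi_0 \notin \opsupp \mathcal{F}f$ and a small open interval $U := (\xi_0-\delta,\xi_0+\delta)$ disjoint from $\opsupp \mathcal{F}f$, and show that the $L^2$ function $\mathcal{F}(\phi f)$ vanishes almost everywhere on $U$. It suffices to test against a dense family in $L^2(U)$, and I will use the class $G^{s_0}_c(U)$ of compactly supported Gevrey-$s_0$ functions, which is dense in $L^2(U)$ for any $s_0 > 1$ (via mollification with a Gevrey bump). The exponent $s_0$ is the single parameter to be fitted to $\phi$.

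By Parseval, the vanishing of $\int_\mathbb{R} \mathcal{F}(\phi f)\,\psi\,d\xi$ for $\psi \in G^{s_0}_c(U)$ amounts to
\[
\int_{\mathbb{R}} \phi(x)f(x)\Psi(x)\,dx = 0, \quad \Psi := \mathcal{F}\psi.
\]
Expanding $\phi(x)=\sum_{k\geq 0} a_k x^k$ and swapping sum and integral, the right-hand side becomes $\sum_k a_k \int_\mathbb{R} x^k f(x)\Psi(x)\,dx$. Each term vanishes: using $x^k \Psi = (-i)^k \mathcal{F}(\partial^k\psi)$ and Parseval once more, $\int x^k f\Psi\,dx = (-i)^k\int \mathcal{F}f(\xi)\,\partial^k\psi(\xi)\,d\xi = 0$, since $\partial^k\psi$ is supported in $\opsupp \psi \subset U$ while $\mathcal{F}f$ vanishes a.e.\ on $U$.

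The crux is the absolute convergence justifying the sum-integral exchange, namely $\sum_k |a_k| \int_\mathbb{R} |x|^k|f\Psi|\,dx < \infty$. Cauchy--Schwarz with Plancherel gives $\int|x|^k|f\Psi|\,dx \leq \|f\|_{L^2}\|\partial^k\psi\|_{L^2}$, and for $\psi \in G^{s_0}_c(U)$ the latter is at most $C R^k (k!)^{s_0}$. Picking any $\rho$ strictly between the order of $\phi$ and $1$, the bound $|\phi(z)| \leq C_\epsilon e^{\epsilon|z|^\rho}$, valid for every $\epsilon > 0$, combined with Cauchy's estimate and an optimization in $r$, yields $|a_k| \leq C_\epsilon (e\epsilon\rho/k)^{k/\rho}$, i.e.\ $|a_k|(k!)^{1/\rho} \lesssim (C'\epsilon^{1/\rho})^k$ by Stirling. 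Hence $|a_k|\|\partial^k\psi\|_{L^2} \lesssim (C''_\epsilon)^k (k!)^{s_0-1/\rho}$. Choosing $s_0 \in (1,1/\rho)$, a window that is non-empty precisely because $\rho<1$, makes $(k!)^{s_0-1/\rho}$ decay super-exponentially and secures summability. Fubini then closes the argument.

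The main obstacle is the arithmetic of Gevrey exponents: the test function $\psi$ must be both compactly supported (which rules out the analytic case $s_0 \leq 1$) and smooth enough for its derivatives not to outpace the Cauchy bounds on $(a_k)$, which forces $s_0 < 1/\rho$. The interval $(1,1/\rho)$ is non-empty only because the order of $\phi$ is strictly less than $1$; this matches the sharpness of the statement, as $\phi(z)=e^{cz}$ (of order exactly $1$) visibly shifts Fourier supports by $c$.
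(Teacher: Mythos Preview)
Your proof is correct and takes a genuinely different route from the paper. The paper regularizes $\phi$ on the physical side: it multiplies $\phi$ by $\mathcal{F}\varrho_\epsilon$, the Fourier transform of a Gevrey-$s$ bump of width $\epsilon$, so that $\phi_\epsilon := (\mathcal{F}\varrho_\epsilon)\phi$ lands in $L^2(\mathbb{R})$ and has exponential type $\leq \epsilon$. Paley--Wiener then confines $\mathcal{F}\phi_\epsilon$ to $[-\epsilon,\epsilon]$, whence $\opsupp\mathcal{F}(\phi_\epsilon f) \subset [-\epsilon,\epsilon]+\opsupp\mathcal{F}f$, and a limit $\epsilon\to 0$ using $\check\varrho_\epsilon * \mathcal{F}(\phi f) \to \mathcal{F}(\phi f)$ in $L^2$ finishes. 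You instead work on the Fourier side by duality: you expand $\phi$ in its Taylor series and test $\mathcal{F}(\phi f)$ against compactly supported Gevrey functions living outside $\opsupp\mathcal{F}f$, each monomial $x^k$ becoming a $\partial_\xi^k$ that keeps the test function's support fixed.

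Both arguments hinge on the same numerology: the order of $\phi$ being strictly below $1$ opens the window $(1,1/\rho)$ for the Gevrey exponent, which is exactly what makes the class simultaneously non-quasianalytic (so compactly supported test functions exist) and regular enough to absorb the growth of $\phi$ or of its Taylor coefficients. The paper's version has the conceptual payoff of producing an explicit multiplier family $(\sigma_\epsilon)$ that gives operational meaning to the heuristic $\opsupp\mathcal{F}\phi\subset\{0\}$, a point the authors exploit in their subsequent discussion. Your version is more self-contained: it avoids the approximation-of-unity limiting step and the Paley--Wiener theorem, replacing them with a single Fubini justification. Either way, the sharpness remark about $\phi(z)=e^{cz}$ is well placed.
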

Lemma \ref{lem:spectrum} is formally shown with the computations
\begin{align*}
    \opsupp \mathcal{F}[\phi f] &= \opsupp \mathcal{F}\phi * \mathcal{F}f \\
    &\subset (\opsupp \mathcal{F}\phi) + (\opsupp \mathcal{F}f) \\
    &\subset (-\infty,0] + \{ 0 \} \subset (-\infty,0], 
\end{align*}
which uses the formal assertion $\opsupp \mathcal{F}\phi \subset \{ 0 \}$. We can give a meaning to $\mathcal{F} \phi$ using Gelfand--Silov's generalized function space $(S_1^1)'$ or the theory of hyperfunctions, see \cite{gs2,kaneko}. However, in this setting it is not clear for us how to define the convolution $\mathcal{F}\phi * \mathcal{F}f$, which prevents us to use the associated Paley-Wiener theorem \cite[Théorème 1, \S III.1, p. 98]{roumieu}, \cite[Chapter 2, p. 259]{gl_eng}. 
\begin{proof}[Proof of Lemma \ref{lem:spectrum}] We let $s > 1$ to be adjusted later on, and we introduce a Gevrey approximation of the unity as follows. We start by considering a function $\varrho \in C^\infty(\mathbb{R})$ with the following properties
\begin{equation}\label{au1}
\opsupp \varrho \subset [-1,1],\quad |\varrho^{(k)}(x)| \leq C^{k+1}k^{ks}
,\quad \int_{-1}^{+1} \varrho(x)dx = 1,
\end{equation}
for some constant $C > 0$ possibly depending on $s$ but not on $x$ or $k$\footnote{The existence of such a function $\varrho$ is granted by Denjoy-Carleman's Theorem. An explicit construction can be done starting from the function $\exp\left( - \frac{1}{t^\gamma}\right)$, which can be shown to be Gevrey of order $s = 1+1/\gamma$.}. From, \textit{e.g.}, \cite[Theorem 1.6.1]{Rodino}, it is seen that $\mathcal{F}\varrho$ is an entire function on $\mathbb{C}$, which satisfies the estimate on the real line
\begin{equation}\label{eq:fourier_Gevrey}
\exists K,\delta>0,\quad \forall \xi \in \mathbb{R},\quad |\mathcal{F}\varrho(\xi)| \leq K e^{-\delta|\xi|^{1/s}},
\end{equation}
We then set
\begin{equation}\label{au2}
\varrho_\epsilon(x) = \frac{1}{\epsilon} \varrho\left( \frac{x}{\epsilon}\right),
\end{equation}
which, from \eqref{eq:fourier_Gevrey}, satisfies 
\begin{equation}\label{eq:fourier_gevrey_eps}
|\mathcal{F}\varrho_\epsilon (\xi)| \leq K e^{-\delta \epsilon^{1/s}|\xi|^{1/s}},\, \xi\in\mathbb R.
\end{equation}
Note further that since $\varrho_\epsilon$ is supported in $[-\epsilon,\epsilon]$ and square-integrable therein, the Paley--Wiener theorem asserts that its Fourier transform satisfies
\begin{equation}\label{eq:type_eps}
\exists C > 0,\quad \forall z \in \mathbb{C},\quad | \mathcal{F}\varrho_\epsilon (z)| \leq C e^{\epsilon |z|}.
\end{equation}
Now consider $\phi_\epsilon:= (\mathcal{F} \varrho_\epsilon) \phi \in \mathcal{H}(\mathbb{C})$. On the one hand, from \eqref{eq:fourier_gevrey_eps} and being $\phi$ of order $<1$ we get 
\[
|\phi_\epsilon(\xi)| \leq K e^{-\delta \epsilon^{1/s}|\xi|^{1/s}} \times C e^{R|\xi|^\rho},
\]
for some constants $C,R > 0$ and $0 < \rho < 1$, not depending on $\xi \in \mathbb{R}$. Taking $1 < s < 1/\rho$ ensures that the above function is square-integrable over the real line, whence $\phi_\epsilon \in L^2(\mathbb{R})$. On the other hand, because $\phi$ has order $\rho < 1$ and \eqref{eq:type_eps}, it follows that $\phi_\epsilon$ is of exponential type $\leq \epsilon$. Using again the Paley--Wiener theorem we obtain that $\mathcal{F} \phi_\epsilon$ is supported in $[-\epsilon,\epsilon]$, which implies that 
\[
\opsupp \mathcal{F}(\phi_\epsilon f) = \opsupp [(\mathcal{F} \phi_\epsilon) * (\mathcal{F}f)] \subset [-\epsilon , +\epsilon] + \opsupp \mathcal{F}f.
\] 
To conclude the argument by a passage to the limit, we remark that by \eqref{au1} and \eqref{au2}, $\varrho_\epsilon$ is an approximation of the unity, so $\check{\varrho}_\epsilon: x \mapsto \varrho_\epsilon(-x)$ is also an approximation of the unity, whence
\[
\mathcal{F}(\phi_\epsilon f) = \mathcal{F}[(\mathcal{F} \varrho_\epsilon)\phi f ] = \check{\varrho}_\epsilon *\mathcal{F}(\phi f) \xrightarrow[\epsilon \rightarrow 0^+]{L^2(\mathbb{R})} \mathcal{F}(\phi f).
\]
There is thus a sequence $(\epsilon_n)_{n=0}^\infty$ tending to $0^+$ such that $\mathcal{F}(\phi_{\epsilon_n} f)$ goes to $\mathcal{F}(\phi f)$ almost surely, which implies the first inclusion in
\[
\opsupp \mathcal{F}(\phi f) \subset  \limsup_{n \rightarrow \infty} ~ \opsupp \mathcal{F}(\phi_{\epsilon_n} f) \subset \limsup_{n \rightarrow \infty} \left([-\epsilon_n , +\epsilon_n] + \opsupp \mathcal{F}f \right) = \opsupp \mathcal{F}f.
\]
\end{proof}
The given proof constructs a family $(\sigma_\epsilon)_{\epsilon > 0}$ of multipliers for the holomorphic function $\phi$, such that for all $\epsilon > 0$ we have
\begin{equation}\label{eq:multiplier}
    \sigma_\epsilon \in \mathcal{H}(\mathbb{C}) \cap L^2(\mathbb{R}), \quad \sigma_\epsilon \phi \in L^2(\mathbb{R}),\quad \opsupp \mathcal{F}(\sigma_\epsilon \phi) \subset [-\epsilon,\epsilon],
\end{equation}
together with
\begin{equation}\label{eq:multiplier1}
 \forall f \in L^2(\mathbb{R}),\quad \phi f \in L^2(\mathbb{R}) \Longrightarrow \sigma_\epsilon \phi f \xrightarrow[\epsilon \rightarrow 0^+]{L^2(\mathbb{R})} \phi f.
\end{equation}
This allows to give a definition of $\opsupp \mathcal{F}\phi \subset \{0\}$, which is moreover compatible with the standard distributional calculus. It should be noticed that if $\phi$ has no growth restriction on the real axis then its Fourier transform, as a generalized function, may not be supported on the real line. For instance, in \cite[Chapter 2]{gl_eng} it is shown that the Fourier transform of $e^x$ can be identified as the analytic functional which is the evaluation at $z=i$. Moreover, a necessary condition for the existence of $\sigma_\epsilon$ such that \eqref{eq:multiplier} holds is that the logarithmic integral converges, \textit{i.e.}
\begin{equation}\label{eq:log_int}
    \int_\mathbb{R} \frac{\log^+ | \phi(x)|}{1+x^2}dx < \infty,
\end{equation}
see \cite[\S X.A]{K2}. Conversely, if $\phi$ has finite exponential type and satisfies \eqref{eq:log_int}, then from the Beurling--Malliavin Theorem \cite[p. 397]{K2} there exists some $\sigma_\epsilon$ satisfying \eqref{eq:multiplier}, for all $\epsilon >0$. We do not know if one may chose $\sigma_\epsilon$ so as \eqref{eq:multiplier1} is also satisfied.
\subsection{A Plancherel result in Gevrey classes}
In this subsection, we show Theorem \ref{theo:plancherel_gevrey}.  Let $f \in \mathcal{S}'(\mathbb{R})$ be such that $\mathcal{F}f \in L^1_{\oploc}(\mathbb{R})$. Let $s ,R > 0$ and $\gamma \in \mathbb{R}$. The proof is done by estimating the quantity
\[
\int_\mathbb{R} |\mathcal{F}f(\xi)\omega(\xi)|^2d\xi,\quad \omega(\xi):= (1+|\xi|)^\gamma e^{R |\xi|^{1/s}},
\]
whether it is finite or not. We will replace the weight $\omega$ by another weight $\varpi$ of the form 
\[
\varpi(z) = \sum_{k=0}^\infty a_k z^{2k}, \quad a_k \geq 0,
\] 
which is such that $\omega \asymp \varpi$ on $\mathbb{R}$ and $\varpi$ is an entire and even function on $\mathbb{C}$. Here and throughout, $\asymp$ means that we both have $\lesssim$ and $\gtrsim$. Let us assume provisionally that such a weight $\varpi$ exists, we then compute
\begin{align*}
\int_\mathbb{R} |\mathcal{F}f(\xi)\omega(\xi)|^2d\xi &\asymp
\int_\mathbb{R} | \mathcal{F}f(\xi) \varpi(\xi)|^2 d\xi \\
&= \int_\mathbb{R} \left| {\mathcal{F}f(\xi)}\sum_{k=0}^\infty a_k\xi^{2k} \right|^2 d\xi\\
&=\sum_{k,l=0}^{\infty}\int_{\mathbb R}\left| \mathcal{F}f(\xi)\right|^2 \xi^{2k}a_k\xi^{2l}a_l  d\xi \\
&=\sum_{k,l=0}^{\infty}a_ka_l \int_{\mathbb R}| {\mathcal{F}f^{(k+l)}(\xi)}|^2   d\xi\\&
\asymp \sum_{k,l=0}^{\infty}a_ka_l ||f^{(k+l)}||_{L^2(\mathbb R)}^2 \\
&=\sum_{n=0}^{\infty} ||{f^{(n)}}||_{L^2(\mathbb R)}^2 \sum_{k=0}^n a_ka_{n-k}. \label{eq:rhs_plancherel_gevrey} \numberthis{}
\end{align*}
We are thus left to find such a $\varpi$ and to estimate the quantity 
\[
A_n:=  \sum_{k=0}^n a_ka_{n-k},
\]
as $n \rightarrow \infty$. For $\alpha,\beta > 0$ we introduce the two-parameters Mittag-Leffler function 
\[
E_{\alpha,\beta}(z):= \sum_{k=0}^\infty \frac{z^k}{\Gamma(\alpha k + \beta)},
\]
which is entire on $\mathbb{C}$. It admits the following asymptotic expansion on the positive real axis: 
\[
E_{\alpha,\beta}(x) \sim \frac{1}{\alpha} x^{\frac{1-\beta}{\alpha}} \exp x^{1/\alpha},\quad x \rightarrow +\infty,
\]
see \cite[Theorems 4.3 and 4.4]{mittag_leffler}. 

\noindent\textbf{A particular case.} Let us assume that $\gamma < 0$, $R = 1$, and put 
\begin{equation}\label{eq:def_param_mittag_leffler}
    \varpi(z) := E_{\alpha,\beta + 1}(z^2) = \sum_{k=0}^\infty \frac{z^{2k}}{\Gamma(\alpha k + \beta + 1)}, \quad \frac{1}{s} = \frac{2}{\alpha},\quad \gamma = -2 \frac{\beta}{\alpha},  
\end{equation}
so that $\varpi$ is entire and even on $\mathbb{C}$, bounded below by a positive constant on $\mathbb{R}$, and admits the asymptotic expansion 
\[
\varpi(\xi) \sim \frac{1}{\alpha} |\xi|^\gamma \exp |\xi|^{1/s},\quad \xi \rightarrow \pm \infty. 
\]
To estimate $A_n$ we start by writing Stirling's formula as 
\[
\frac{1}{a_k} = \Gamma(\alpha k + \beta + 1) \asymp (1+k)^{\beta + 1/2} \left( \frac{\alpha k}{e} \right)^{\alpha k},\quad  \forall k\in \mathbb N.
\]
Note that we write $(1+k)^{\beta + 1/2} $ rather than $k^{\beta + 1/2}$, so that the right-hand side of the above estimate never vanishes. We obtain, for $0 \leq k \leq n$ and $n \geq 1$,
\begin{align*}
\frac{1}{a_ka_{n-k}} &\asymp \left[ (1+k)(1+n-k) \right]^{\beta + 1/2} \left( \frac{\alpha k}{e} \right)^{\alpha k} \left( \frac{\alpha (n-k)}{e} \right)^{\alpha (n-k)} \\
&= n^{1 + 2\beta} \left[ \left( \frac{1}{n}+\frac{k}{n}\right)\left( \frac{1}{n}+1-\frac{k}{n}\right) \right]^{\beta + 1/2} \left( \frac{n\alpha}{e}\right)^{\alpha n} \left[ \left(\frac{k}{n} \right)^{\alpha k / n}  \left(1-\frac{k}{n}\right)^{\alpha \left(1-\frac{k}{n}\right)} \right]^n \\
&= \left( \frac{n\alpha}{e}\right)^{\alpha n} n^{1 + 2\beta} g_n(k/n)^{-1}  h(k/n)^n,
\end{align*}
with 
\[
g_n(x) = \left[ \left( \frac{1}{n}+x \right)\left( \frac{1}{n}+1-x \right) \right]^{-\beta - 1/2},\quad h(x) = x^{\alpha x} (1-x)^{\alpha(1-x)}.
\]
This brings 
\begin{equation}\label{eq:asymp_An}
A_n \asymp \left( \frac{e}{\alpha n} \right)^{\alpha n} \frac{1}{n^{2 \beta }} \cdot \frac{1}{n} \sum_{k=0}^n g_n(k/n) h(k/n)^{-n} , \quad (n \geq 1),
\end{equation}
which reduces the estimation of $A_n$ to that of the second factor on the right-hand side of \eqref{eq:asymp_An}. We write 
\begin{equation}\label{eq:decomp_laplace_discrete}
    \frac{1}{n}\sum_{k=0}^n g_n(k/n) h(k/n)^{-n} =  g_n(1/2) \frac{1}{n} \sum_{k=0}^n h(k/n)^{-n} + \frac{1}{n}\sum_{k=0}^n (g_n(k/n)-g_n(1/2)) h(k/n)^{-n}.
\end{equation}
To estimate the above right-hand side, we begin by estimating the first of the two sums. We write 
\[
    \frac{1}{n}\sum_{k=0}^n h(k/n)^{-n} = \frac{1}{n} \sum_{k=0}^n e^{-n\log h(k/n)},
\]
which looks like a discrete analog of the integral 
\[
\int_0^1 e^{-n \log h(x)}dx.
\]
For the above integral, the asymptotic behaviour is well known by Laplace's method. Following \cite{laplace_discrete}, we rely on the following Lemma.
\begin{Lemma}\label{lem:laplace_discrete}
Let $u: [0,1] \rightarrow \mathbb{R}$ be in $C^2(0,1) \cap C[0,1]$ with global strict minimum at $x_0 \in (0,1)$, for which $u''(x_0) > 0$. Then 
\[
\frac{1}{n} \sum_{k=0}^n e^{-nu(k/n)} \sim \sqrt{\frac{2 \pi}{u''(x_0) n}} e^{-nu(x_0)},\quad n \rightarrow \infty.
\]
\end{Lemma}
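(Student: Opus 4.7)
The proof is a discrete analogue of the classical Laplace method: localize the sum near the minimum $x_0$, replace $u$ by its quadratic Taylor expansion, and recognize the resulting exponential sum as a Riemann approximation of a Gaussian integral.

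\textbf{Localization.} Replace $u$ by $v := u - u(x_0)$; both sides acquire the same factor $e^{-nu(x_0)}$, so we may assume $v(x_0) = 0$ and $v \geq 0$. By continuity and strict minimality, for every $\delta > 0$ the quantity $m_\delta := \inf\{v(x) : x \in [0,1],\ |x - x_0| \geq \delta\}$ is strictly positive, hence
$$\sum_{k:\, |k/n - x_0|\geq \delta} e^{-nv(k/n)} \leq (n+1)\, e^{-nm_\delta},$$
which decays faster than any polynomial in $n$. It is therefore enough to estimate the sum restricted to $|k/n - x_0| \leq \delta$ for arbitrarily small $\delta > 0$.

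\textbf{Quadratic sandwich.} Fix $\epsilon > 0$. Since $v(x_0) = v'(x_0) = 0$ and $v$ is $C^2$ on a neighborhood of $x_0$ lying in $(0,1)$, Taylor's theorem provides $\delta = \delta(\epsilon) > 0$ such that, for $|x - x_0| \leq \delta$,
$$c_-(x - x_0)^2 \leq v(x) \leq c_+(x - x_0)^2, \qquad c_\pm := \tfrac{1}{2} u''(x_0) \pm \epsilon.$$
Setting $x = k/n$ sandwiches the localized sum between the two pure Gaussian sums
$$S_n(c_\pm) := \sum_{k:\, |k/n - x_0| \leq \delta} \exp\bigl(-n\, c_\pm\, (k/n - x_0)^2\bigr).$$

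\textbf{Gaussian Riemann sum.} Writing $nx_0 = \lfloor nx_0 \rfloor + \theta_n$ with $\theta_n \in [0,1)$, changing index to $j = k - \lfloor nx_0 \rfloor$, and setting $s_j := (j - \theta_n)/\sqrt{n}$ (so that consecutive $s_j$ differ by $1/\sqrt{n}$), one finds
$$S_n(c) = \sqrt{n} \cdot \frac{1}{\sqrt{n}} \sum_{j:\, |s_j| \leq \delta \sqrt{n}} e^{-c\, s_j^2}.$$
As $n \to \infty$, the inner sum is a Riemann approximation of $\int_{\mathbb{R}} e^{-c s^2}\,ds = \sqrt{\pi/c}$: the mesh $1/\sqrt{n}$ shrinks, the truncation interval widens to $\mathbb{R}$, and the missing Gaussian tail is exponentially small. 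Hence $S_n(c) \sim \sqrt{\pi n / c}$. Combined with the sandwich, this gives
$$\sqrt{\frac{2\pi n}{u''(x_0) + 2\epsilon}} \lesssim \sum_{k=0}^n e^{-n v(k/n)} \lesssim \sqrt{\frac{2\pi n}{u''(x_0) - 2\epsilon}}$$
(in the sense of ratios of the two sides approaching $1$), and sending $\epsilon \to 0^+$ yields the claimed equivalence.

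\textbf{Main obstacle.} The delicate point is the Riemann-sum limit, since the mesh shrinks while the truncation window widens, and the shift $\theta_n \in [0,1)$ varies with $n$. A clean way to handle it is to first extend the sum to all $j \in \mathbb{Z}$ and compare $(1/\sqrt{n}) \sum_{j \in \mathbb{Z}} e^{-c s_j^2}$ with $\int_{\mathbb{R}} e^{-cs^2}\,ds$ via Poisson summation or Euler–Maclaurin — either yields an error decaying faster than any polynomial in $n$, uniformly in $\theta_n$ — and then subtract the exponentially small contribution of $|s_j| > \delta\sqrt{n}$.
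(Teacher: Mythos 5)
Your argument is correct and follows essentially the same route as the paper's proof: localization away from $x_0$ with an exponentially small remainder, a quadratic sandwich of $u$ near $x_0$ (your $\tfrac12 u''(x_0)\pm\epsilon$ plays the role of the paper's $\inf$ and $\sup$ of $u''$ on a $\delta$-neighborhood), and evaluation of the resulting Gaussian sums by Poisson summation uniformly in the shift, which is exactly what the paper does via the functional equation of Jacobi's theta function. The only difference is that you leave the Poisson/Euler--Maclaurin step as a sketch, whereas the paper carries it out explicitly (its Step 1), but the idea and the required uniformity in $\theta_n$ are correctly identified.
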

The proof of this Lemma is elementary but rather lengthy, we defer it to the Appendix \ref{sec:appB}. Applying this result with $u = \log h$ we obtain the following asymptotic behaviour, for which we recall that $h(1/2) = 2^{-\alpha}$,
\[
\frac{1}{n} \sum_{k=0}^n h(k/n)^{-n} \asymp \frac{2^{\alpha n}}{\sqrt{n}}, \quad (n \geq 1).
\]
Because $g_n(1/2) \asymp 1$, we deduce 
\begin{equation}\label{eq:asymptotics_laplace_discrete}
    g_n(1/2)\frac{1}{n}\sum_{k=0}^n h(k/n)^{-n} \asymp \frac{2^{\alpha n}}{\sqrt{n}},\quad (n \geq 1).
\end{equation}
Coming back to \eqref{eq:decomp_laplace_discrete}, we see that \eqref{eq:asymptotics_laplace_discrete} may be plugged-in \eqref{eq:asymp_An} as soon as
\begin{equation}\label{eq:difference_negligible}
    \frac{1}{n} \sum_{k=0}^n (g_n(k/n)-g_n(1/2)) h(k/n)^{-n} = o\left(  \frac{2^{\alpha n}}{\sqrt{n}} \right),\quad n \rightarrow \infty. 
\end{equation}
To show that \eqref{eq:difference_negligible} holds, we introduce a sequence $(\epsilon_n)_{n\in\mathbb N^*}$ with $0<\epsilon_n<1/2$ for any $n\in\mathbb N^*$, to be specified later on, and compute
\begin{align*}
    \left| \frac{1}{n} \sum_{k=0}^n (g_n(k/n)-g_n(1/2)) h(k/n)^{-n} \right| &\leq \frac{1}{n} \sum_{\left| \frac{k}{n} - \frac{1}{2} \right| \leq \epsilon_n} |g_n(k/n)-g_n(1/2)| h(k/n)^{-n}  \\
    & \quad \quad + \frac{1}{n} \sum_{\left| \frac{k}{n} - \frac{1}{2} \right| > \epsilon_n} |g_n(k/n)-g_n(1/2)| h(k/n)^{-n}. \label{eq:rhs_laplace_discrete} \numberthis{}
\end{align*}
We estimate the two terms of the right-hand side of \eqref{eq:rhs_laplace_discrete} separately. For the second one, we observe that $g_n$ is positive on $[0,1]$, where it reaches its maximal value at $x= 0$ and $x=1$, for which 
\[
g_n(0) \sim n^{\beta +1/2},\quad n \rightarrow \infty.
\]
Moreover, 
\[
\min_{|x-x_0| \geq \epsilon_n}h(x) = h(1/2+\epsilon_n),
\]
hence 
\[
\frac{1}{n} \sum_{\left| \frac{k}{n} - \frac{1}{2} \right| > \epsilon_n} |g_n(k/n)-g_n(1/2)| h(k/n)^{-n} \lesssim n^{\beta+1/2}h(1/2+\epsilon_n)^{-n},\quad n\geq 1.
\]
Recalling $h(1/2) = 2^{-\alpha}$, we deduce
\[
n^{\beta+1/2}h(1/2 + \epsilon_n)^{-n} = o\left( \frac{2^{\alpha n}}{\sqrt{n}} \right) \Longleftrightarrow n\left[ \log h(1/2+\epsilon_n) - \log h(1/2) \right] - (\beta+1) \log n \rightarrow + \infty,
\]
and by log convexity of $h$, there exists some constant $\kappa > 0$ such that 
\[
\log h(1/2+\epsilon_n) - \log h(1/2) \geq \kappa \epsilon_n^2.
\]
We take 
\[
\epsilon_n = \frac{1}{n^\mu},\quad 0 < \mu < 1/2,
\]
so that the second term on the right-hand side of \eqref{eq:rhs_laplace_discrete} has the appropriate size. For the first term, we write
\[
g_n(x) = \frac{1}{i_n(x)^{2\beta + 1}},\quad i_n(x) = \left( \frac{1}{n}+x \right)\left( \frac{1}{n}+1-x \right),
\]
and compute for $|x-1/2| \leq \delta$
\[
|i_n(x) - 1/4| \leq |i_n(x) - i_n(1/2)| + |i_n(1/2) - 1/4| \leq \delta^2 + \frac{2}{n}.  
\]
Therefore, for small $\delta$ and large $n$, the term $i_n(x)$ is close to $1/4$, around which the function $x \mapsto x^{-2\beta-1}$ is Lipschitz. We thus put $\delta = \epsilon_n$, so that for $n$ large enough
\[
\frac{1}{n} \sum_{\left| \frac{k}{n} - \frac{1}{2} \right| \leq \epsilon_n} |g_n(k/n)-g_n(1/2)| h(k/n)^{-n} \lesssim \left( \epsilon_n^2 + \frac{1}{n}\right) 2^{\alpha n}.
\]
We take $1/4 < \mu < 1/2$ in the definition of $\epsilon_n$ so that the above contribution is negligible compared with $2^{\alpha n}/ \sqrt{n}$ as $n \rightarrow \infty$. This shows that \eqref{eq:difference_negligible} holds. 

We may thus plug \eqref{eq:asymptotics_laplace_discrete} in \eqref{eq:asymp_An}, which yields 
\[
A_n \asymp \left( \frac{2e}{\alpha n} \right)^{\alpha n} \frac{1}{n^{2 \beta + 1/2}}, \quad (n \geq 1).
\]
From \eqref{eq:rhs_plancherel_gevrey} and \eqref{eq:def_G_sRgamma_Mn}, we see that the sequence $(M_n)$ should satisfy $M_n \asymp 1/\sqrt{A_n}$, and using \eqref{eq:def_param_mittag_leffler} we compute
\[
\frac{1}{\sqrt{A_n}} \asymp \left( \frac{ns}{e} \right)^{ns}n^{-s\gamma + 1/4},\quad (n\geq 1).
\]
This ends the proof of Theorem \ref{theo:plancherel_gevrey}, under the additional assumptions $R = 1$ and $\gamma < 0$. \newline
\newline 
\textbf{General case.} We extend the result to the cases of $R > 0$ and $\gamma \in \mathbb{R}$ arbitrary by acting on $f$. The case of general $R > 0$ is easily deduced from the case $R = 1$ by replacing $f(x)$ by $f(R^sx)$, so, without loss of generality, we may assume that $R = 1$. To allow $\gamma$ to be non-negative, we pick $p \in \mathbb{N}$ large enough so that $\tilde{\gamma}:= \gamma -p<0$. We then observe that for every $f \in \mathcal{S}'(\mathbb{R})$ with $\mathcal{F}f \in L^1_{\oploc}(\mathbb{R})$, there holds 
\begin{align*}
    \int_\mathbb{R} | \mathcal{F}f(\xi) e^{-|\xi|^{1/s}} (1+|\xi|)^\gamma|^2 d\xi &\asymp \sum_{k=0}^p \int_\mathbb{R} | \mathcal{F}f^{(k)}(\xi) e^{-|\xi|^{1/s}} (1+|\xi|)^{\tilde{\gamma}} |^2 d\xi \\
    &\asymp \sum_{k=0}^p \| f^{(k)} \|_{\mathcal{G}_{s,1,\tilde{\gamma}}}^2 \\
    &= \sum_{k=0}^p \sum_{n=0}^\infty \left( \frac{\|f^{(k+n)}\|_{L^2(\mathbb{R})}}{M_n} \right)^2 \\
    &\asymp \sum_{n=0}^\infty \left( \frac{\|f^{(n)}\|_{L^2(\mathbb{R})}}{N_n} \right)^2,
\end{align*}
where 
\[
M_n = \left( \frac{ns}{e} \right)^{ns} (1+n)^{-s\tilde{\gamma}+1/4},
\]
and $(N_n)$ is any positive sequence with 
\[
N_n \asymp M_{n-p},\quad n \geq p.
\]
For instance, we can take
\[
N_n:= \left( \frac{ns}{e} \right)^{ns} (1+n)^{-s\gamma+1/4}.
\]
This shows that 
\[
\int_\mathbb{R} | \mathcal{F}f(\xi) e^{-|\xi|^{1/s}} (1+|\xi|)^\gamma|^2 d\xi \asymp \|f\|_{\mathcal{G}_{s,1,\gamma}},
\]
which ends the proof of Theorem \ref{theo:plancherel_gevrey}.
\subsection{Proof of the main results}\label{smt}
For the system \ref{eq:heat} we consider transposition solutions \cite[\S 2.3]{Coron}. 
\begin{Definition}
Let $u \in L^2(0,\infty)$, a solution of \eqref{eq:heat} is $z \in C([0,\infty);L^2(0,1))$ such that for all $0 < T < \infty$ and all $\varphi^T \in L^2(0,1)$, denoting $\varphi(t,x)$ the solution of 
\[
\left\lbrace \begin{array}{rclcc}
    -\varphi_t(t,x) & = & \varphi_{xx}(t,x),& 0 < t < T,& 0 < x < 1,  \\
    \varphi_x(t,1) & = & 0, \\
    \varphi_x(t,0) &=& 0,\\
    \varphi(T,x) &=& \varphi^T(x),
\end{array}\right.
\]
we have 
\[
\int_0^1 z(T,x)\varphi^T(x)dx = \int_0^T \varphi(t,1) u(t)dt.
\]
\end{Definition}
It is easy to see that \eqref{eq:heat} is then well-posed in the sense that for all $u \in L^2(0,\infty)$, there exists a unique solution $z$ to \eqref{eq:heat}, and that 
\[
\forall 0 < T < \infty,\quad \exists C > 0,\quad \forall u \in L^2(0,\infty),\quad \| z \|_{C([0,T] ; L^2(0,1))} \leq C \|u\|_{L^2(0,T)}.
\]
Moreover, introducing the spectral decomposition of the Neumann Laplacian 
\[
\lambda_j = (j \pi)^2,\quad e_j(x) = \left\lbrace \begin{array}{cc}
    1 & j = 0, \\
    \sqrt{2} \cos(j\pi x) & j \geq 1,
\end{array}\right. ,\quad (j \in \mathbb{N})
\]
one easily verifies that $z$ assumes the representation
\begin{equation}\label{eq:repr_z}
    z(t) = \int_0^t \sum_{j=0}^\infty e^{-\lambda_j (t-\sigma)} e_j(1)u(\sigma)e_j d\sigma,    
\end{equation}
where for fixed $t$ there holds 
\[
\int_0^t \sum_{j=0}^\infty \left\| e^{-\lambda_j (t-\sigma)} e_j(1) u(\sigma)e_j \right\|_{C[0,1]} d\sigma < \infty.
\]
Thus, the function $z$ belongs to $C([0,\infty) \times [0,1])$, and the output $y(t)$ makes sense as the pointwise evaluation of $z(t,x)$ at $x=0$. In fact, using the hypoellipticity of the heat operator $\partial_t - \partial_{xx}$, one can further show that $y \in C^\infty_{(0)}[0,\infty)$.
\begin{proof}[Proof of  Theorem \ref{theo:charac_Y_Tinfty}]
In what follows all the signals defined on $(0,\infty)$ are extended by 0 on $(-\infty,0)$. Assume first that $y \in \mathcal{Y}(0,\infty)$, let us show that $y$ is Laplace transformable and its Laplace transform satisfies \eqref{eq:laplace_rewrite}. A formal proof is done by taking the Laplace transform in time in \eqref{eq:heat} (without checking whether it is legitimate or not), solving the resulting second order differential equation in space, using the boundary conditions given by the second and third lines of \eqref{eq:heat} and finally evaluating at $x=0$. 

The above method will be applied in \S\ref{sec:Neu_Neu} to another system. For \eqref{eq:heat} it will be more convenient to work with the representation formula \eqref{eq:repr_z}. The representation formula \eqref{eq:repr_z} may be evaluated at $x=0$, yielding 
\[
y(t) = z(t,0) = \int_0^t k(t-\sigma)u(\sigma)d\sigma,\quad k(t) := 1 + 2 \sum_{j=1}^\infty e^{-(j\pi)^2t}(-1)^j,
\]
where the defined kernel $k$ is extended by $0$ for $t < 0$. The function $k$ may be rewritten using the Poisson summation formula as 
\[
k(t) = \frac{1}{\sqrt{\pi t}} \sum_{m=-\infty}^{+\infty} e^{- \frac{(m+1/2)^2}{t}}.
\]
Combining the two representations of $k$, we arrive to $k \in C^\infty_{(0)}[0,\infty) \cap L^\infty(0,\infty)$. The last fact can be used as an alternative proof of $y \in C_{(0)}^\infty[0,\infty)$. The signals $k,u,y$ are therefore all Laplace transformable, with abscissa of absolute convergence no greater than $0$. The convolution rule further applies: 
\[
\hat{y}(s) =\hat{k}(s) \hat u(s),\quad s \in \mathbb{C}_+,
\]
where an explicit computation gives that
$$\hat{k}(s) =\sum_{m=0}^\infty   \mathcal{L}\left(\frac{1}{\sqrt{\pi t}}  e^{- \frac{(m+1/2)^2}{t}}\right)(s) = \frac{1}{\sqrt{s}}\sum_{m=0}^\infty     e^{- 2|m+1/2|\sqrt{s}}=\frac{1}{\sqrt{s}}\frac{2e^{\sqrt{s}}}{e^{2\sqrt{s}}-1}.$$
In the above computation, the interversion series-integral is justified by considering first $s > 0$, which yields only positive quantities in the summand. We deduce that the transfer function of \eqref{eq:heat} is given by 
$$\hat{k}(s)=\frac{1}{\sqrt{s}\sinh \sqrt{s}},$$
whence
\[
\hat{y}(s) =\frac{\hat{u}(s)}{\sqrt{s}\sinh \sqrt{s}}, 
\]
which rewrites as
\begin{equation}\label{luy}
\hat{u}(s) = \sqrt{s}\sinh \sqrt{s} \hat{y}(s) = \frac{\sinh
\sqrt{s}}{\sqrt{s}} \hat{\dot{y}}(s) = \phi(s) \hat{\dot{y}}(s),
\end{equation}
where we have denoted 
\begin{equation}\label{defi}
\phi(s):= \frac{\sinh \sqrt{s}}{\sqrt{s}}.
\end{equation}
Note that $1/\phi$ is bounded on $\mathbb{C}_+$, so that $\hat{\dot{y}} \in \mathcal{H}^2(\mathbb{C}_+)$ and $\dot{y} \in L^2(0,\infty)$. In particular, the Laplace transform $\hat{\dot{y}}$ has the non tangential boundary value $\hat{\dot{y}}(i\xi) = \mathcal{F}\dot{y}(\xi)$. Now, coming back to 
\[
\phi \hat{\dot{y}} = \hat{u} \in \mathcal{H}^2(\mathbb{C}_+),
\]
we take advantage of the above function having a square-integrable boundary values to deduce that $\dot{y}$ satisfies \eqref{eq:charac_Y_fourier}. From the Paley--Wiener Theorem \ref{theo:paley_wiener_isom} and Proposition \ref{prop:hardy}, the previous reasoning can be reversed, so that 
\[
\mathcal{Y}(0,\infty) = \left\lbrace y \in \mathcal{S}'(\mathbb{R}): \opsupp y \subset [0,\infty),\quad \dot{y} \mbox{ satisfies }\eqref{eq:charac_Y_fourier} \right\rbrace. 
\]
The rest of the proof of Theorem \ref{theo:charac_Y_Tinfty} then follows from the Gevrey-Plancherel Theorem \ref{theo:plancherel_gevrey}.
\end{proof}
\begin{proof}[Proof of Theorem \ref{theo:Y_finite_T}]
It is enough to show that any function $\varphi \in C^\infty_{(0)}[0,T]$ satisfying the two conditions \eqref{eq:reg_Y_T_fini} and \eqref{eq:final_in_R} lies in $\mathcal{Y}(0,T)$. For, let $\varphi$ be as such, from \eqref{eq:final_in_R} there is a control $u \in L^2(0,T)$ such that 
\[
z(T,x) = \sum_{k=0}^\infty \varphi^{(k)}(T^-) \frac{x^{2k}}{(2k)!},\quad 0 < x < 1,
\]
where $z$ is the solution of \eqref{eq:heat} with control $u$. In particular, we have $\partial_t^k z(T,0) =  \varphi^{(k)}(T^-)$ for every $k \in \mathbb{N}$. We then extend the control $u$ by 0 on $(T,\infty)$, which extends $z$ for times $t > T$ as a $C^\infty([0,\infty)\times (-1,1))$. Consider then 
\[
\psi(t):= \left\lbrace \begin{array}{rl}
    \varphi(t), & 0 < t < T, \\
    z(t,0), & T < t < \infty.
\end{array}\right.
\]
By Theorem \ref{theo:charac_Y_Tinfty}, the function $\psi$ lies in $\mathcal{Y}(0,\infty)$. Therefore, its restriction to $(0,T)$ lies in $\mathcal{Y}(0,T)$. This ends the proof of Theorem \ref{theo:Y_finite_T}.
\end{proof}
\begin{Remark}\label{rem:no_adm_C}
For an initial data $z^0 \in L^2(0,1)$, the solution $z$ of 
\begin{equation}\label{eq:homo}
    \left\lbrace \begin{array}{rclcc}
z_t(t,x) &=& z_{xx}(t,x),& 0 < x < 1,&t>0,\\
z_x(t,1) &=& 0, \\
z_x(t,0) &=& 0,\\
z(0,x) &=& z^0(x),
\end{array}\right.
\end{equation}
is of class $C^\infty((0,\infty) \times (-1,1))$ and the output $y(t) = z(t,0)$ is in $L^p(0,T)$ for all $1 \leq p < 4$ and $0 < T < \infty$. The value of $p$ cannot be increased to $p > 4$. This however has no impact on the proof of Theorem \ref{theo:Y_finite_T}, as therein we only consider such outputs for initial data in the reachable space, which are seen to be exactly shifted outputs of the system \eqref{eq:heat} (with a control and zero initial data). In other words, we do not need the existence of the output $y$ for \eqref{eq:homo}.
\end{Remark}
\begin{proof}[Proof of Corollary \ref{coro:loc_tracable}]
Let $\varphi \in C_{(0)}^\infty[0,T]$ satisfying \eqref{eq:reg_Y_T_fini} and $0 < \delta < T$. Introduce $\chi \in C^\infty[0,T]$ such that 
\[
|\chi^{(n)}(t)| \leq C^{n+1}n^{ns},\quad \chi(t) = 1~~(0 \leq t \leq T-\delta),\quad \chi(t) = 0~~(T-\delta/2 \leq t \leq T),
\]
where $C > 0$ and $1 < s < 2$ are independent of $n \in \mathbb{N}$ and $t \in [0,T]$. Reasoning similarly to \cite[Lemma 3.7]{reachable_martin} we see that $\chi \varphi$ satisfies again \eqref{eq:reg_Y_T_fini}. Moreover, since $\chi \varphi$ is flat at $t=T$ it also satisfies \eqref{eq:final_in_R}. Thus from Theorem \ref{theo:Y_finite_T} we have $\chi \varphi \in \mathcal{Y}(0,T)$. Therefore, the restriction of $\chi \varphi$ to $[0,T-\delta]$ belongs to $\mathcal{Y}(0,T-\delta)$, which ends the proof. 
\end{proof}
\subsection{Proof of Proposition \ref{prop:borel_LR}}
In this subsection we prove Proposition \ref{prop:borel_LR}. We will rely on the following Lemma taken from \cite[Theorem 3.8]{reachable_martin}, which is proved using the Borel summation \cite{ramis}.
\begin{Lemma}\label{lem:borel_sum}
Let $(a_n)$ be a sequence such that the power series  
\[
f(z):= \sum_{n=0}^\infty \frac{a_{n+1}}{n!(n+1)!}z^n,
\]
has a nonzero radius of convergence. Assume that $f$ extends to a holomorphic function on a neighborhood of $\mathbb{R}_-$ in $\mathbb{C}$ with,
\begin{equation}\label{eq:growth_cut}
\exists C > 0,\quad \forall n \geq 0,\quad \forall x < 0,\quad |f^{(n)}(x)| \leq C |f^{(n)}(0)|.     
\end{equation}
Then there exists $\varphi \in C^\infty[-1,0]$ such that
\[
\varphi^{(n)}(0) = a_n,\quad |\varphi^{(n)}(t)| \lesssim |a_n|,\quad (n \geq 0,\quad -1 \leq t \leq 0).
\]
\end{Lemma}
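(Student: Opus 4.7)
The plan is to invert the Borel‑type transform $f$ via a Laplace integral along the negative real axis, where the growth hypothesis guarantees control on $f$ and all its derivatives. Explicitly, I would define
\[
\varphi(t) := a_0 + t \int_0^\infty e^{-s} f(ts) \, ds, \quad t \in [-1, 0],
\]
and check absolute convergence for every such $t$ using the bound $|f(ts)| \leq C|f(0)|$ on $\mathbb{R}_-$ (the case $n=0$ of \eqref{eq:growth_cut}) together with the exponential factor $e^{-s}$. A formal term-by-term expansion $f(ts) = \sum_k f^{(k)}(0)(ts)^k/k!$, combined with $\int_0^\infty e^{-s} s^k\, ds = k!$ and the identity $f^{(k)}(0) = a_{k+1}/(k+1)!$ read off from the definition of $f$, recovers the formal Taylor series $\varphi(t) \sim \sum_n a_n t^n/n!$ of the desired interpolant.

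The crux is a clean closed form for the derivatives of $\varphi$. Writing $\varphi = a_0 + tF$ with $F(t) := \int_0^\infty e^{-s} f(ts)\, ds$, differentiation under the integral sign (legitimate thanks to \eqref{eq:growth_cut}) gives $F^{(k)}(t) = \int_0^\infty e^{-s} s^k f^{(k)}(ts)\, ds$, and Leibniz yields $\varphi^{(n)}(t) = nF^{(n-1)}(t) + tF^{(n)}(t)$ for $n \geq 1$. To turn the awkward factor of $t$ into something manageable, I would integrate by parts in $s$, exploiting $\tfrac{d}{ds}[f^{(n-1)}(ts)] = tf^{(n)}(ts)$. The boundary term at $s=+\infty$ vanishes by \eqref{eq:growth_cut} and $s^n e^{-s}\to 0$, the one at $s=0$ vanishes for $n\geq 1$, and the $-nF^{(n-1)}(t)$ piece produced cancels the Leibniz term, leaving the compact formula
\[
\varphi^{(n)}(t) = \int_0^\infty s^n e^{-s} f^{(n-1)}(ts) \, ds, \qquad n \geq 1.
\]

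From this, both conclusions drop out at once. For $t \in [-1,0]$ and $s\geq 0$ we have $ts\leq 0$, so \eqref{eq:growth_cut} gives $|f^{(n-1)}(ts)|\leq C|f^{(n-1)}(0)|=C|a_n|/n!$, and
\[
|\varphi^{(n)}(t)| \leq \frac{C|a_n|}{n!}\int_0^\infty s^n e^{-s}\, ds = C|a_n|,
\]
the desired pointwise estimate. Evaluating the same formula at $t=0$ gives $\varphi^{(n)}(0) = n!\,f^{(n-1)}(0) = a_n$, the interpolation condition; continuity of $\varphi^{(n)}$ up to $t=0$, and hence smoothness on $[-1,0]$, follows by dominated convergence from the same uniform bound on the integrand.

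The main obstacle I anticipate is not the algebra but the analytic justification: differentiating $F$ under the integral arbitrarily many times, and performing the integration by parts (with vanishing boundary term at $+\infty$), both rely on the uniform dominations supplied by \eqref{eq:growth_cut}. The holomorphic extension of $f$ to a neighborhood of $\mathbb{R}_-$ is what produces these derivatives, while the precise form of \eqref{eq:growth_cut} — every derivative at $x<0$ controlled by its value at $0$, with the same constant $C$ — is what makes each resulting integrand dominated by an $L^1$ function of $s$ uniformly in $t\in[-1,0]$, and ultimately yields the sharp bound $|\varphi^{(n)}(t)|\lesssim |a_n|$ rather than a weaker Gevrey-type estimate.
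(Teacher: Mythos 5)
Your argument is correct and is essentially the paper's own route: the paper does not reprove this lemma but takes it from \cite{reachable_martin} (Theorem 3.8), whose proof is precisely this Borel--Laplace summation, and your integration-by-parts identity $\varphi^{(n)}(t)=\int_0^\infty s^n e^{-s}f^{(n-1)}(ts)\,ds$ combined with \eqref{eq:growth_cut} and $f^{(n-1)}(0)=a_n/n!$ is exactly the heart of that argument. The only loose end is the case $n=0$, where your construction gives $|\varphi(t)|\le |a_0|+C|a_1|$ rather than a bound by $|a_0|$ alone; this is harmless and in fact unavoidable (the literal $n=0$ estimate cannot hold when $a_0=0$ but $a_1\neq 0$), and it suffices for the use made of the lemma in Proposition \ref{prop:borel_LR}, where $a_0=1$.
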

\begin{proof}[Proof of Proposition \ref{prop:borel_LR}]
Following \cite[Remark 3.3, item 3]{reachable_martin}, we consider the function
\[
f(z):= \frac{1}{1-z},
\]
which satisfies the hypotheses of Lemma \ref{lem:borel_sum}. Hence  there exists $\phi \in C^\infty[-1,0]$ such that  
\[
\phi^{(n)}(0) = b_n,\quad |\phi^{(n)}(t)| \lesssim  b_n,\quad (n \geq 0,\quad -1 \leq t \leq 0),
\]
where $b_{n+1} := n!(n+1)!$ and $b_0 = 1$. By Stirling's formula, 
\[
b_n \asymp \frac{(2n)!}{\sqrt{1+n}4^n}.
\]
We then consider $a_n:= 4^nb_n$. By scaling, there exists $\varphi \in C^\infty[-1,0]$ such that  
\[
\varphi^{(n)}(0) = a_n,\quad |\varphi^{(n)}(t)| \lesssim \frac{(2n)!}{\sqrt{1+n}},\quad (n \geq 0,\quad -1 \leq t \leq 0).
\]
Assume by contradiction that there exists some $\psi \in C^\infty[0,1]$ such that \eqref{eq:reg_Y_T_fini} holds with $T = 1$. The sequence $(a_n)$ is then the sequence of successive derivatives at an interior point of a $\mathcal{G}_{2,1/\sqrt{2},-1/2}$ function. Hence, by Corollary \ref{coro:loc_tracable} we deduce
\[
 \sum_{n=0}^\infty a_n \frac{\zeta^{2n}}{(2n)!} \in \mathcal{R}.
\]
However, a sharper estimate of $b_n$ yields
\[
\frac{a_n}{(2n)!} = \sqrt{\frac{\pi}{n}} +  O \left( \frac{1}{n^{3/2}} \right),
\]
where, for any bounded sequence $(c_n)$, the series $\sum c_n \zeta^{2n} / n^{3/2}$ converges absolutely in $A^2(D(0,1))$. We thus deduce that the function
\[
\opLi_{-1/2}(\zeta^2) = \sum_{n=1}^\infty \frac{\zeta^{2n}}{n^{-1/2}},
\]
belongs to $A^2(\Omega)$. For every $s \in \mathbb{R}$, the function 
\[
\opLi_s(\zeta) = \sum_{n=1}^\infty \frac{\zeta^{n}}{n^{s}},
\]
is a special function often called the polylogarithm, or Jonqui\`ere's function, and is a special case of Lerch's function. It is defined by a power series with radius of convergence 1 and has the following asymptotic behaviour
\begin{equation}\label{eq:asymptotics_polylog}
    \opLi_s(\zeta) = \frac{\Gamma(1-s)2^{s-1}}{(1-\zeta)^{1-s}} + O(1), \quad \zeta \rightarrow 1,\quad \zeta \in D(1/2,1/2),
\end{equation}
for every $s \neq 1,2,3,...$, see \cite[\S 1.11, equation (8)]{transcendental}. It is then clear that $\opLi_{-1/2}(\zeta^2)$ cannot lie in $A^2(\Omega)$, which is the sought contradiction.  
\end{proof}
We end this Section with two consequences of Proposition \ref{prop:borel_LR}. First, we formulate below the extension problem for Gevrey functions of order $2$, to be compared with Definition \ref{def:carl}.
\begin{Definition}\label{def:ext}
We say that the extension problem (for $s=2$) is solvable with loss $\rho \in (0,1)$ if: for every functions $\phi \in C^\infty[-1,1]$ such that 
\[
\exists C,R_0 > 0,\quad \forall n \in \mathbb{N},\quad \forall t \in [-1,1],\quad |\phi^{(n)}(t)| \leq C \frac{(2n)!}{R_0^{2n}},
\]
and for every $\epsilon > 0$, there exists a function $\varphi \in C^\infty[-2,2]$ such that 
\[
\forall t \in [-1,1],\quad \phi(t) = \varphi(t),
\]
and 
\[
\exists C' > 0, \quad \forall n \in \mathbb{N},\quad \forall t \in [-2,2],\quad |\varphi^{(n)}(t)| \leq C' \frac{(2n)!}{(\rho R_0 - \epsilon)^{2n}}.
\]
\end{Definition}
It is clear that if the extension problem can be solved with some loss factor $\rho$, then it can also be solved with any $\tilde\rho \leq \rho$. This allows to define $\rho^{Ext}$ as the optimal (\textit{i.e.} maximal) loss factor $\rho$ such that the extension problem is solvable with loss $\rho$. Recall that from Mitjagin's theorem the interpolation problem can be solved for Gevrey functions of order $2$, the optimal loss factor being $\rho^{Carl} = 1/\sqrt{2}$. Proposition \ref{prop:borel_LR} shows that $\rho^{Ext} \leq 1/\sqrt{2}$ and it easy to see that $\rho^{Ext} \geq \rho^{Carl}$, hence $\rho^{Ext} = \rho^{Carl}$. In other words: there is no way to extend functions in the Gevrey class of order $2$ which yields a better loss factor than using the solution of the interpolation problem.\newline
\newline
Secondly, we point out that Proposition \ref{prop:borel_gevrey} describes the range of the Borel operator, over some Hilbert spaces of Gevrey-2 functions on some interval $I$, when the point $t_0$ at which the derivatives are taken is an interior point of $I$. Such description is harder when $t_0$ is a boundary point of $I$, in fact we have the following.
\begin{Proposition}
Let $R > 0$ and denote $\mathcal{B}_{x_0}^R$ the Borel operator acting on $G^{2,R}[-1,1]$, at the point $x_0 \in [-1,1]$. We then have:
\begin{itemize}
    \item For every $x_0 \in (-1,1)$, $\opRange \mathcal{B}_{x_0}^R = \opRange \mathcal{B}_0^R \subset \opRange \mathcal{B}_{-1}^R \cap \opRange \mathcal{B}_1^R$.
    \item For every $\epsilon > 0$, there holds $\opRange \mathcal{B}_{\pm 1}^R \subset \opRange \mathcal{B}_{\mp 1}^{R/\sqrt{2} - \epsilon}$.
    \item For every $\epsilon > 0$, we do not have  $\opRange \mathcal{B}_{\pm 1}^R \subset \opRange \mathcal{B}_{\mp 1}^{R/\sqrt{2} + \epsilon}$.
\end{itemize}
\end{Proposition}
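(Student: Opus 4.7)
My plan is to handle the three items separately, using translation and Gevrey-$s$ cutoffs ($1 < s < 2$) throughout, together with Mitjagin's interpolation theorem for item~2 and Proposition~\ref{prop:borel_LR} for item~3.

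For item~1, I combine translation with Gevrey-$s$ cutoffs. Given $\phi \in G^{2,R}[-1,1]$ with $\phi^{(n)}(x_0) = a_n$ for $x_0 \in (-1,1)$, the translate $\psi(t) := \phi(t+x_0)$ belongs to $G^{2,R}$ on the shifted interval $[-1-x_0,\, 1-x_0]$, which is a two-sided neighborhood of $0$. Multiplying $\psi$ by a Gevrey-$s$ cutoff $\chi$ equal to $1$ in a neighborhood of $0$ and supported inside $[-1,1] \cap [-1-x_0,\, 1-x_0]$, and extending by $0$, yields $\chi\psi \in G^{2,R}[-1,1]$ with $(\chi\psi)^{(n)}(0) = a_n$. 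The key fact is that Gevrey-$s$ functions with $s < 2$ lie in every $G^{2,R'}$ (with a suitable constant), so Leibniz's rule combined with Stirling's estimate shows $\chi\psi \in G^{2,R}$ without degrading $R$. The reverse inclusion is symmetric, and the same argument with the shift $t \mapsto t \mp 1$ and a cutoff near $\pm 1$ proves $\opRange \mathcal{B}_0^R \subset \opRange \mathcal{B}_{\pm 1}^R$.

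For item~2, I combine Mitjagin's interpolation theorem with item~1: if $(a_n) = \mathcal{B}_1^R(\phi)$ with $\phi \in G^{2,R}[-1,1]$, then $|a_n| \leq C(2n)!/R^{2n}$, so Mitjagin's theorem produces some $\psi \in G^{2,\, R/\sqrt{2}-\epsilon}[-1,1]$ with $\psi^{(n)}(0) = a_n$, and item~1 transports this to $(a_n) \in \opRange \mathcal{B}_{-1}^{R/\sqrt{2}-\epsilon}$.

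For item~3, I feed the counterexample of Proposition~\ref{prop:borel_LR} through an appropriate rescaling. Let $(a_n)$ and $\varphi$ be as in that proposition; set $c := 1/R^2$ and $\tilde a_n := c^n a_n$. The rescaling $\tilde\varphi(t) := \varphi(1 + c(t-1))$ places $\tilde\varphi$ in $G^{2,R}$ on $[1-R^2,\, 1]$ with $\tilde\varphi^{(n)}(1) = \tilde a_n$; a Gevrey-$s$ cutoff (if $R \leq \sqrt{2}$) or a direct restriction (if $R > \sqrt{2}$) then yields $\phi \in G^{2,R}[-1,1]$ with $\phi^{(n)}(1) = \tilde a_n$, so $(\tilde a_n) \in \opRange \mathcal{B}_1^R$. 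Suppose for contradiction some $\psi \in G^{2,\, R/\sqrt{2}+\epsilon}[-1,1]$ satisfies $\psi^{(n)}(-1) = \tilde a_n$. The inverse rescaling $\tilde\psi(s) := \psi(-1 + (s-1)/c)$ yields $\tilde\psi \in G^{2,\tilde R}$ on $[1,\, 1 + 2/R^2]$, with $\tilde R := 1/\sqrt{2} + \epsilon/R$ and $\tilde\psi^{(n)}(1^+) = a_n$; after extending by a Gevrey-$s$ cutoff to $[1,2]$ if needed, the Gevrey bound gives $\|\tilde\psi^{(k+1)}\|_{L^2(1,2)} \lesssim (2k+2)!/\tilde R^{2k+2}$, and since $4\tilde R^4 > 1$ as soon as $\epsilon > 0$, the series in \eqref{eq:reg_Y_T_fini} converges, contradicting Proposition~\ref{prop:borel_LR}. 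The main obstacle throughout is the careful bookkeeping of Gevrey radii under the chain of translations, rescalings, and cutoffs, resting on the general principle that Gevrey-$s$ cutoffs with $s$ just above $1$ act as multipliers preserving the Gevrey-$2$ radius.
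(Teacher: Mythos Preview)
Your proof is correct and follows the same approach as the paper, which simply records that item~1 is trivial, item~2 is Mitjagin's theorem, and item~3 is Proposition~\ref{prop:borel_LR}. You have usefully spelled out the translation/cutoff arguments for item~1 and, for item~3, the rescaling $\tilde a_n = R^{-2n}a_n$ needed to pass from the specific radius in Proposition~\ref{prop:borel_LR} to an arbitrary $R>0$; this rescaling is implicit in the paper (it is the dilation step of Proposition~\ref{prop:carl_indep}), but your explicit bookkeeping of domains and radii under the affine changes of variable is correct and makes the argument self-contained.
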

\begin{proof}
The first item is trivial, the second one is by Mitjagin's Theorem \ref{theo:mitjagin1}, and the third item is a consequence of Proposition \ref{prop:borel_LR}.
\end{proof}
\section{Similar systems}
\subsection{Other boundary conditions}
\subsubsection{Neumann-to-Neumann}\label{sec:Neu_Neu}
Let us consider the system 
\begin{equation}\label{eq:heat_NN}
\left\lbrace \begin{array}{rclcc}
z_t(t,x) &=& z_{xx}(t,x),& 0 < x < 1,&t>0,\\
z_x(t,1) &=& u(t), \\
z(t,0) &=& 0,\\
z(0,x) &=& z^0(x),  \\
y(t) &=& z_x(t,0),
\end{array}\right.
\end{equation}
which we call ``Neumann-to-Neumann'' because the control is through the Neumann action and the output is the Neumann trace. Let us first disregard the output $y$; similarly as for \eqref{eq:heat} we consider transposition solutions and it is easy to check that \eqref{eq:heat_NN} is well-posed on the state space $L^2(0,1)$. When $z^0 = 0$ and $u \in L^2(0,\infty)$, extending the solution $z$ of \eqref{eq:heat_NN} as an odd function of $x \in (-1,1)$ and using the hypoellipticity of the heat equation one sees that $z \in C^\infty([0,\infty) \times (-1,1))$. Hence, when $z^0 = 0$, the output $y(t) = z_x(t,0)$ is just a pointwise evaluation. When $u = 0$ and $z^0 \in L^2(0,1)$, one sees that $z \in C^\infty((0,\infty) \times (-1,1))$ and $z_x(\cdot,0) \in L^1(0,\infty)$, but we will not use this fact. 

Let now $u \in L^2(0,\infty)$, we compute the Laplace transform of $y$. A representation formula similar to \eqref{eq:repr_z} would yield $z \in C([0,\infty) \times [0,1])$ but be of no use here because the output is the Neumann trace, rather we shall pass the equation \eqref{eq:heat_NN} to the Laplace transform. To this aim, we first take $u \in H^1(0,\infty)$ such that $u(0) = 0$. We consider the function 
\[
\xi(t,x) = z(t,x) - x(x-1)u(t),
\]
which is continuous in both variables and solves 
\[
\left\lbrace \begin{array}{rclcc}
\xi_t(t,x) &=& \xi_{xx}(t,x) + f(t,x),& 0 < x < 1,&t>0,\\
\xi_x(t,1) &=& 0, \\
\xi(t,0) &=& 0,\\
\xi(0,x) &=& 0, 
\end{array}\right., \quad f(t,x) = -x(x-1)\dot{u}(t) + \frac{u(t)}{2}.
\]
The source term $f$ is of class $L^2(0,\infty ; L^2(0,1))$, hence by maximal regularity \cite[Proposition 3.7, \S II.1]{bensoussan_control} we have 
\[
\xi \in H^1(0,\infty; L^2(0,1)) \cap L^2(0,\infty;H^2(0,1)).
\]
The function $z$ is therefore also in the above class and we are allowed to pass \eqref{eq:heat_NN} to the Laplace transform with respect to $t$: for all $s \in \mathbb{C}_+$, the function $\hat{z}(s,\cdot)$ satisfies 
\[
\left\lbrace \begin{array}{rcl c}
    s \hat{z}(s,x) & = & \partial_{xx} \hat{z}(s,x) ,& \mbox{a.e. } x \in (0,1),  \\
    \partial_x \hat{z}(s,1) &=& \hat{u}(s),\\
    \hat{z}(s,0) &=& 0,
\end{array}\right.
\]
from which we deduce that 
\[
\hat{z}(s,x) = \frac{\hat{u}(s) \sinh(\sqrt{s}x)}{\sqrt{s}\cosh(\sqrt{s})}.
\]
We then evaluate 
\[
\hat{y}(s) = \partial_x\hat{z}(s,0) = \frac{\hat{u}(s)}{\cosh\sqrt{s}},
\]
so that the transfer function of \eqref{eq:heat_NN} is thus 
\[
\mathbf{H}_{\opNeu - \opNeu}(s) = \frac{1}{\cosh \sqrt{s}}.
\]
Notice that the above function is bounded in modulus over $\mathbb{C}_+$, from which one deduces by approximation arguments that for general $u \in L^2(0,\infty)$, the output $y$ is $L^2(0,\infty)$. 

Repeating the analysis proposed in Section \ref{sec:tracking}, we deduce that the trackable subspace in infinite time satisfies
\[
\mathcal{Y}_{\opNeu - \opNeu}(0,\infty) = \mathcal{G}_{2,1/\sqrt{2},0}(0,\infty) \cap C^\infty_{(0)}[0,\infty).
\]
For the trackable subspace in finite time, we need the reachable space of the control system \eqref{eq:heat_NN}, that is defined by the same equations as \eqref{eq:heat_NN} but with nonzero initial data disregarding the output $y$. From \cite[\S 3.2]{hartmann_orsoni}, this reachable space is given by
\[
\mathcal{R}_{\opNeu - \opNeu} = \{ f \in \mathcal{H}(\Omega): f \mbox{ odd} ,\quad f' \in L^2(\Omega) \}.
\]
Repeating the proof of Theorem \ref{theo:Y_finite_T}, one arrives to 
\[
\mathcal{Y}_{\opNeu - \opNeu}(0,T) = \left\lbrace y \in \mathcal{G}_{2,1/\sqrt{2},0}(0,T) \cap C^\infty_{(0)}[0,T]: \sum_{k=0}^\infty y^{(k)}(T) \frac{\zeta^{2k}}{(2k)!} \in A^2(\Omega) \right\rbrace.
\]
\subsubsection{Dirichlet-to-Neumann}
We consider now the system 
\begin{equation}\label{eq:heat_DN}
\left\lbrace \begin{array}{rclcc}
z_t(t,x) &=& z_{xx}(t,x),& 0 < x < 1,&t>0,\\
z(t,1) &=& u(t), \\
z(t,0) &=& 0,\\
z(0,x) &=& 0 ,  \\
y(t) &=& z_x(t,0).
\end{array}\right.
\end{equation}
When the initial data in \eqref{eq:heat_DN} is nonzero and disregarding the output $y$, it is a well-posed system on the state space $H^{-1}(0,1) := (H^1_0(0,1))'$, with reachable space
\[
\mathcal{R}_{\opDir - \opNeu} = \{ f \in \mathcal{H}(\Omega): f \mbox{ odd},\quad f \in L^2(\Omega) \}.
\]
Observe that when the initial data $z^0 \in H^{-1}(0,1)$ of \eqref{eq:heat_DN} is nonzero and the control $u$ is zero, the output $y(t) = z_x(t,0)$ may fail to be integrable around $t = 0$. But this does not affect our method as explained in Remark \ref{rem:no_adm_C}. Reasoning as in the previous subsubsection one sees that the transfer function of \eqref{eq:heat_DN} is given by 
\[
\mathbf{H}_{\opDir - \opNeu}(s) = \frac{\sqrt{s}}{\sinh \sqrt{s}},
\]
so that 
\[
\mathcal{Y}_{\opDir - \opNeu}(0,\infty) =  \mathcal{G}_{2,1/\sqrt{2},-1/2}(0,\infty) \cap C^\infty_{(0)}[0,\infty),
\]
and
\[
\mathcal{Y}_{\opDir - \opNeu}(0,T) = \left\lbrace y \in \mathcal{G}_{2,1/\sqrt{2},-1/2}(0,T) \cap C^\infty_{(0)}[0,T]:\sum_{k=0}^\infty y^{(k)}(T) \frac{\zeta^{2k+1}}{(2k+1)!} \in A^2(\Omega) \right\rbrace.
\]
\subsubsection{Dirichlet-to-Dirichlet}
Finally consider the system 
\begin{equation}\label{eq:heat_DD}
\left\lbrace \begin{array}{rclcc}
z_t(t,x) &=& z_{xx}(t,x),& 0 < x < 1,&t>0,\\
z(t,1) &=& u(t), \\
z_x(t,0) &=& 0,\\
z(0,x) &=& 0 ,  \\
y(t) &=& z(t,0).
\end{array}\right.
\end{equation}
When the initial data in \eqref{eq:heat_DD} is nonzero and disregarding the output $y$, it is a well-posed system on the state space $H^{-1}(0,1)$. We have
\[
\mathcal{R}_{\opDir - \opDir} = \{ f \in \mathcal{H}(\Omega): f \mbox{ even},\quad f \in L^2(\Omega) \}, \quad \mathbf{H}_{\opDir - \opDir}(s) = \frac{1}{\cosh \sqrt{s}},
\]
so that 
\[
\mathcal{Y}_{\opDir - \opDir}(0,\infty) = \mathcal{G}_{2,1/\sqrt{2},0}(0,\infty) \cap C^\infty_{(0)}[0,\infty),
\]
and
\[
\mathcal{Y}_{\opDir - \opDir}(0,T) = \left\lbrace y \in \mathcal{G}_{2,1/\sqrt{2},0}(0,T) \cap C^\infty_{(0)}[0,T]: \sum_{k=0}^\infty y^{(k)}(T) \frac{\zeta^{2k}}{(2k)!} \in A^2(\Omega) \right\rbrace.
\]
\subsection{Smooth control laws}
For a given integer $p \geq 1$, consider the case where the control $u$ belongs to $H^p_0(0,\infty)$, which is defined as 
\[
H^p_0(0,\infty) := \left\lbrace u \in H^p(0,\infty) : u(0) = u'(0) = \cdots = u^{(p-1)}(0) = 0 \right\rbrace.
\]
For such $u$, the relation \eqref{eq:laplace_rewrite} can be rewritten using
\[
\widehat{u^{(p)}}(s) = s^p \hat{u}(s).
\]
This allows to identify the set $\mathcal{Y}^p(0,\infty)$ of these signals that can be tracked with $H^p_0(0,\infty)$ controls. For finite time considerations, Theorem \ref{theo:Y_finite_T} can be adapted using the knowledge we have of the reachable space of the heat equation with smooth inputs, see \cite[\S 3.2]{hartmann_orsoni}. 

\subsection{Other point measurement}
For the system \eqref{eq:heat}, the output $y$ is the Dirichlet trace at $x = 0$. Consider the same problem but with measurement taken at some $0 < x_0 < 1$: 
\begin{equation}\label{eq:heat_x0}
\left\lbrace \begin{array}{rclcc}
z_t(t,x) &=& z_{xx}(t,x),& 0 < x < 1,&t>0,\\
z_x(t,1) &=& u(t), \\
z_x(t,0) &=& 0,\\
z(0,x) &=& 0 ,  \\
y(t) &=& z(t,x_0).
\end{array}\right.
\end{equation}
The proof of Theorem \ref{theo:charac_Y_Tinfty} can be easily adapted to cover this case. Let us denote by $\mathcal{Y}_{x_0}(0,\infty)$ the corresponding trackable subspace.
\begin{Theorem}
For every $0 < x_0 < 1$, the set $\mathcal{Y}_{x_0}(0,\infty)$ is constituted of these $\varphi \in C^\infty_{(0)}[0,\infty)$ such that 
\begin{equation}\label{eq:CNS_reg_interior}
\sum_{k=0}^{\infty} \left( \frac{\|{\varphi^{(k+1)}}\|_{L^2(0,\infty)}}{(1+k)^{3/4}(2k)!} \left(\frac{1-x_0}{\sqrt{2}}\right)^{2k}\right)^2 < \infty.     
\end{equation}
\end{Theorem}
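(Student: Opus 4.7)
The plan is to mirror the proof of Theorem \ref{theo:charac_Y_Tinfty} verbatim, the only difference being the computation of the transfer function and of the resulting weight on the Fourier side. First I would pass \eqref{eq:heat_x0} through the Laplace transform in time, exactly as in Sections \ref{smt} and \ref{sec:Neu_Neu}. Solving the ODE in $x$ with the Neumann conditions $\hat{z}_x(s,1) = \hat{u}(s)$ and $\hat{z}_x(s,0) = 0$ gives $\hat{z}(s,x) = \hat{u}(s)\cosh(\sqrt{s}x)/(\sqrt{s}\sinh\sqrt{s})$, and evaluation at $x=x_0$ yields the transfer function
\[
\mathbf{H}_{x_0}(s) = \frac{\cosh(\sqrt{s}x_0)}{\sqrt{s}\sinh\sqrt{s}}.
\]
Rewriting this using $\widehat{\dot{y}}(s) = s\hat{y}(s)$ provides $\hat{u}(s) = \phi(s)\widehat{\dot{y}}(s)$ with
\[
\phi(s) = \frac{\sinh\sqrt{s}}{\sqrt{s}\,\cosh(\sqrt{s}x_0)}.
\]

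Next I would verify the two structural properties of $\phi$ needed to invoke Proposition \ref{prop:hardy}. The function $\phi$ is entire (the double zero of $\sqrt{s}$ at the origin is cancelled by $\sinh\sqrt{s}$, and $\cosh(\sqrt{s}x_0)$ vanishes only at $s = -(\pi(2k+1)/(2x_0))^2 \in (-\infty,0)$), and it has order $1/2 < 1$. The reciprocal $1/\phi(s) = \sqrt{s}\cosh(\sqrt{s}x_0)/\sinh\sqrt{s}$ is bounded on $\mathbb{C}_+$: on any sector where $\mathrm{Re}\sqrt{s} \geq c\sqrt{|s|}$ with $c>0$, the dominant terms yield $|1/\phi(s)|\lesssim |\sqrt{s}|\,e^{-(1-x_0)\mathrm{Re}\sqrt{s}}$, which decays because $x_0<1$. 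This shows $\widehat{\dot{y}}\in\mathcal{H}^2(\mathbb{C}_+)$ as soon as $u\in L^2(0,\infty)$, hence $\dot{y}\in L^2(0,\infty)$ and its $\mathcal{F}\dot{y}$ coincides with the angular boundary value of $\widehat{\dot{y}}$.

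Applying Proposition \ref{prop:hardy} to $\phi$ and $F = \widehat{\dot{y}}$ then characterizes $\mathcal{Y}_{x_0}(0,\infty)$ as those $y\in C^\infty_{(0)}[0,\infty)$ with $\dot{y}\in L^2(0,\infty)$ satisfying
\[
\int_\mathbb{R}|\phi(i\xi)\mathcal{F}\dot{y}(\xi)|^2\,d\xi < \infty.
\]
A direct asymptotic computation using $\sqrt{i\xi} = \sqrt{|\xi|/2}\,(1+i\,\mathrm{sign}\,\xi)$ gives
\[
|\phi(i\xi)|^2 \asymp \frac{e^{(1-x_0)\sqrt{2|\xi|}}}{1+|\xi|}, \qquad \xi\in\mathbb{R},
\]
so the weight is equivalent to $(1+|\xi|)^{-1/2}e^{R|\xi|^{1/s}}$ with $s=2$, $R=(1-x_0)/\sqrt{2}$ and $\gamma=-1/2$.

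Finally I would invoke the Gevrey--Plancherel Theorem \ref{theo:plancherel_gevrey} with these parameters. With
\[
M_n = \frac{(2n)!}{((1-x_0)/\sqrt{2})^{2n}}(1+n)^{3/4},
\]
the condition $\mathcal{F}\dot{y}\in\hat{\mathcal{G}}_{2,(1-x_0)/\sqrt{2},-1/2}$ is equivalent (after extending $\dot{y}$ by zero to $\mathbb{R}$, which is legitimate thanks to the flatness of $y$ at $t=0$) to $\sum_{n\geq 0}(\|\dot{y}^{(n)}\|_{L^2(0,\infty)}/M_n)^2<\infty$, which is exactly \eqref{eq:CNS_reg_interior} upon reindexing $n=k$ and using $\dot{y}^{(k)} = \varphi^{(k+1)}$. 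The only place requiring a bit of care is the verification that $1/\phi$ is bounded up to the imaginary axis (in particular uniformly in neighbourhoods of the real zeros of $\cosh(\sqrt{s}x_0)$, which lie in $(-\infty,0)$ and hence never meet $\overline{\mathbb{C}_+}$); beyond that every step is a transcription of the corresponding step in the proof of Theorem \ref{theo:charac_Y_Tinfty}.
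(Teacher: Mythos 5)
Your derivation of the transfer function, the boundedness of $1/\phi$ on $\mathbb{C}_+$ (giving $\dot{y}\in L^2(0,\infty)$), the weight asymptotics $|\phi(i\xi)|^2\asymp e^{(1-x_0)\sqrt{2|\xi|}}/(1+|\xi|)$, and the final application of Theorem \ref{theo:plancherel_gevrey} with $s=2$, $R=(1-x_0)/\sqrt{2}$, $\gamma=-1/2$ all match the paper. However, there is a genuine gap in the converse direction: you invoke Proposition \ref{prop:hardy}, whose hypothesis is that the multiplier be an \emph{entire} function of order $<1$, and you justify this by claiming that $\phi(s)=\frac{\sinh\sqrt{s}}{\sqrt{s}\cosh(\sqrt{s}x_0)}$ is entire because the zeros of $\cosh(\sqrt{s}x_0)$ lie in $(-\infty,0)$. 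Lying in $(-\infty,0)$ does not make them harmless: they are poles of $\phi$ unless they happen to coincide with zeros of $\sinh\sqrt{s}$, i.e.\ unless $\bigl(\tfrac{(2k+1)\pi}{2x_0}\bigr)^2=(m\pi)^2$ for some integer $m$, which fails for generic $x_0\in(0,1)$ (for instance $x_0=1/3$ gives no cancellation at all). So $\phi$ is only meromorphic, Proposition \ref{prop:hardy} does not apply as stated, and the converse implication is not proved by your argument; the ``bit of care'' you flag about $1/\phi$ near the zeros of $\cosh(\sqrt{s}x_0)$ concerns the (unproblematic) direct implication, not this issue.

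The paper circumvents exactly this obstruction: for the converse it does not apply Proposition \ref{prop:hardy} to $\phi$ directly, but goes back to Proposition \ref{prop:PWP} and controls the spectrum of the boundary value by factoring
\[
\phi(i\xi)\,\mathcal{F}\dot{y}(\xi)=\frac{\sinh\sqrt{i\xi}}{\sqrt{i\xi}}\cdot\frac{\mathcal{F}\dot{y}(\xi)}{\cosh(\sqrt{i\xi}x_0)}.
\]
Lemma \ref{lem:spectrum} (the core of Proposition \ref{prop:hardy}) is applied only to the genuinely entire factor $\sinh\sqrt{s}/\sqrt{s}$, of order $1/2$, while the inclusion $\opspec\bigl(1/\cosh(\sqrt{i\xi}x_0)\bigr)\subset(-\infty,0]$ is checked separately (elementary contour integration, since $1/\cosh(\sqrt{s}x_0)$ is bounded and holomorphic on $\mathbb{C}_+$), so that $\opspec\bigl(\phi(i\xi)\mathcal{F}\dot{y}(\xi)\bigr)\subset(-\infty,0]$ follows and Proposition \ref{prop:PWP} yields $\phi\hat{\dot{y}}\in\mathcal{H}^2(\mathbb{C}_+)$. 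To repair your proof you should replace the direct invocation of Proposition \ref{prop:hardy} by this two-factor spectral argument (or otherwise extend Proposition \ref{prop:hardy} to multipliers that are quotients of an entire function of order $<1$ by a bounded invertible-on-$\overline{\mathbb{C}_+}$ analytic function, which is essentially what the factorization accomplishes).
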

The Gevrey radius $(1-x_0)/\sqrt{2}$ is the radius of the circle centered at $x_0$ and inscribed in $\Omega$. 
\begin{proof}
Let $u \in L^2(0,\infty)$ and denote $y$ the associated output of \eqref{eq:heat_x0}. Reasoning as in the proof of Theorem \ref{theo:charac_Y_Tinfty}\footnote{One could also reason as for \eqref{eq:heat_NN} considering the output $z_t(t,x_0)$} we deduce that $y$ is Laplace transformable, with absicssa of absolute convergence no greater than $0$, and satisfies
\[
\hat{y}(s) = \frac{\cosh(\sqrt{s}x_0)}{\sqrt{s}\sinh(\sqrt{s})}\hat{u}(s),\quad \opRe s > 0.
\]
We rewrites the above as 
\[
\psi(s)\hat{\dot{y}}(s) = \hat{u}(s),\quad \psi(s):= \frac{\sinh{\sqrt{s}}}{\sqrt{s}\cosh(\sqrt{s}x_0)}.
\]
The function $1/\psi$ is bounded on $\mathbb{C}_+$ from the Phragm\'en-Lindel\"of principle, hence $\dot{y} \in L^2(0,\infty)$, and we moreover have
\[
+ \infty > \int_\mathbb{R} |\hat{u}(i\xi)|^2 d\xi = \int_\mathbb{R} | \psi(i\xi)\hat{\dot{y}}(i\xi) | ^2 d\xi \asymp \int_\mathbb{R} \left| \mathcal{F}\dot{y}(\xi) \frac{e^{\sqrt{|\xi|}\frac{1-x_0}{\sqrt{2}}}}{1+\sqrt{|\xi|}}\right|^2 d\xi.
\]
This precisely means that $\dot{y} \in \mathcal{G}_{2,\frac{1-x_0}{\sqrt{2}},-1/2 }$, and from Theorem \ref{theo:plancherel_gevrey} we deduce \eqref{eq:CNS_reg_interior}.

Conversely, let $\varphi \in C^\infty_{(0)}[0,\infty)$ satisfy \eqref{eq:CNS_reg_interior}. Clearly it is enough to show that $\psi \hat{\dot{y}} \in \mathcal{H}^2(\mathbb{C}_+)$. We use Proposition \ref{prop:PWP}, the latter function has boundary values $\psi(i\xi) \mathcal{F}\dot{y}(\xi)$ which are square- integrable by hypothesis \eqref{eq:CNS_reg_interior} and Theorem \ref{theo:plancherel_gevrey}. It is therefore enough to check the condition on the support of the Fourier transform. To simplify the notations, we introduce the shorthand $\opspec T := \opsupp \mathcal{F}T$ for every $T \in \mathcal{S}'(\mathbb{R})$. We then have 
\begin{align*}
    \opspec \psi(i\xi) \mathcal{F}\dot{y}(\xi) &= \opspec \left( \frac{\sinh{\sqrt{i\xi}}}{\sqrt{i\xi}} \cdot \frac{\mathcal{F}\dot{y}(\xi)}{\cosh(\sqrt{i\xi}x_0)} \right) \\
    &\subset \opspec \left( \frac{\mathcal{F}\dot{y}(\xi)}{\cosh(\sqrt{i\xi}x_0)} \right) \\
    &\subset (\opspec \mathcal{F}\dot{y}(\xi)) + \left( \opspec \frac{1}{\cosh(\sqrt{i\xi}x_0)}\right) \\
    &\subset (-\infty,0] + (-\infty,0] \subset (-\infty,0],
\end{align*}
where the first inclusion is due to Lemma \ref{lem:spectrum} and 
\[
\opspec \frac{1}{\cosh(\sqrt{i\xi}x_0)} \subset (-\infty,0],
\]
is elementary by contour integration.
\end{proof}
In view of Theorem \ref{theo:Y_finite_T} one may hope for a characterization of $\mathcal{Y}_{x_0}(0,T)$ when $0 < T < \infty$. However, the flat-output method brings 
\[
z(t,x) = \sum_{k=0}^\infty y^{(k)}(t) \frac{(x-x_0)^{2k}}{(2k)!} + \sum_{k=0}^\infty \partial_t^k z_x(t,x_0) \frac{(x-x_0)^{2k+1}}{(2k+1)!},
\]
hence our method does not allow us to characterize $\mathcal{Y}_{x_0}(0,T)$. This suggests the investigation of the following system 
\begin{equation}\label{eq:heat_x02}
\left\lbrace \begin{array}{rclcc}
z_t(t,x) &=& z_{xx}(t,x),& -1 < x < 1,&t>0,\\
z(t,-1) &=& u_1(t), \\
z(t,1) &=& u_2(t),\\
z(0,x) &=& 0 ,  \\
y_1(t) &=& z(t,x_0),\\
y_2(t) &=& z_x(t,x_0).
\end{array}\right.
\end{equation}
For brevity we leave open the determination of the trackable subspace for the system \eqref{eq:heat_x02}.
\section{Interpolation in Gevrey classes of order $2$}
In this Section we prove Proposition \ref{prop:borel_gevrey} and Corollary \ref{coro:individual_loss}.
\begin{proof}[Proof of Proposition \ref{prop:borel_gevrey}]
We begin by proving the result in case $R = 1/\sqrt{2}$ and $\gamma = 0$. This case is special because the system ``Dirichlet-to-Dirichlet'' \eqref{eq:heat_DD} has its trackable subspace satisfying 
\[
\mathcal{Y}_{\opDir - \opDir}(0,\infty) = \mathcal{G}_{2,1/\sqrt{2},0}(0,\infty) \cap C^\infty_{(0)}[0,\infty).
\]
Moreover, its reachable space is given by 
\begin{equation}\label{eq:reachable_DirDir}
\mathcal{R}_{\opDir - \opDir} = \{ f \in \mathcal{H}(\Omega): f \mbox{ even},\quad f \in L^2(\Omega) \}.    
\end{equation}
Let us now show that 
\begin{equation}\label{eq:charac_borel_simple}
\mathcal{B}_{t_0} \mathcal{G}_{2,1/\sqrt{2},0}(I) = \left\lbrace (a_k): \sum_{k=0}^\infty a_k \frac{\zeta^{2k}}{(2k)!} \in A^2(\Omega) \right\rbrace.
\end{equation}
By cutoff arguments, there is no loss of generality in assuming that $I$ is bounded (see \cite[Lemma 3.7]{reachable_martin}). By translation we may further assume that $\inf I = 0$. Then, on the one hand, take $(a_k)$ in $\mathcal{B}_{t_0}\mathcal{G}_{2,1/\sqrt{2},0}(I)$ and fix $\varphi$ a pre-image. Owing to Corollary \ref{coro:loc_tracable} the function $\varphi$ is in $\mathcal{Y}_{\opDir-\opDir}(0,t_0)$ and thus satisfies 
\begin{equation}\label{eq:series_reachable}
\sum_{k=0}^\infty a_k \frac{\zeta^{2k}}{(2k)!} = \sum_{k=0}^\infty \varphi^{(k)}(t_0) \frac{\zeta^{2k}}{(2k)!} \in \mathcal{R}_{\opDir-\opDir}.
\end{equation}
This shows the direct inclusion of \eqref{eq:charac_borel_simple}. On the other hand, let $(a_k)$ be a sequence such that 
\begin{equation}\label{eq:une-serie-au-pif}
\sum_{k=0}^\infty a_k \frac{\zeta^{2k}}{(2k)!} \in A^2(\Omega).    
\end{equation}
By definition of the reachable space and \eqref{eq:reachable_DirDir}, there is $u \in L^2(0,t_0)$ such that the solution $z$ of \eqref{eq:heat_DD} with control $u$ reaches the state defined by the power series in \eqref{eq:une-serie-au-pif}, in time $t_0$. We extend the control $u$ by 0 on $(t_0,\infty)$, which extends $z$ for any positive times, what shows that $(a_k) \in \mathcal{B}_{t_0}\mathcal{G}_{2,1/\sqrt{2},0}(I)$. This completes the proof of \eqref{eq:charac_borel_simple}.

Acting on $(a_k)$ by scaling and shift, we deduce that 
\[
\mathcal{B}_{t_0} \mathcal{G}_{2,R,p}(I) = \left\lbrace (a_k): \sum^\infty a_{k+p} \frac{(\sqrt{2}R\zeta)^{2k}}{(2k)!} \in A^2(\Omega) \right\rbrace,\quad R > 0, \quad p \in \mathbb{Z}.
\]
To deal with the case $\gamma = p - 1/2$, $p \in \mathbb{Z}$, we repeat the same analysis for the system \eqref{eq:heat_DN}. We obtain 
\[
\mathcal{B}_{t_0} \mathcal{G}_{2,1/\sqrt{2},-1/2}(I) = \mathcal{B}_{t_0} \mathcal{Y}_{\opDir-\opNeu}(0,\infty) = \left\lbrace (a_k): \sum_{k=0}^\infty a_k \frac{\zeta^{2k+1}}{(2k+1)!} \in A^2(\Omega) \right\rbrace,
\] 
and 
\[
\mathcal{B}_{t_0} \mathcal{G}_{2,R,p-1/2}(I) = \left\lbrace (a_k): \sum^\infty a_{k+p} \frac{(\sqrt{2}R\zeta)^{2k+1}}{(2k+1)!} \in A^2(\Omega) \right\rbrace,
\]
which completes the proof of Proposition \ref{prop:borel_gevrey}.
\end{proof}
\begin{proof}[Proof of Corollary \ref{coro:individual_loss}]
We fix $(a_n)$ an arbitrary sequence of complex numbers and we introduce the quantity 
\[
\mathfrak{R}_a :=  \sup \{ R > 0: \exists \varphi \in \mathcal{G}_{2,R,0}(-1,1),\quad \forall n \in \mathbb{N},\quad \varphi^{(n)}(0) = a_n \}.
\]
From Proposition \ref{prop:borel_gevrey} we clearly have 
\[
\mathfrak{R}_a = \sup \left\lbrace R \geq 0: \sum_{k=0}^\infty a_k \frac{(\sqrt{2} R \zeta)^{2k}}{(2k)!} \in A^2(\Omega) \right\rbrace,
\]
hence we are left to show that $\mathfrak{R}_a = R_a$. On the one hand, for all $0 < R' < R$ we have $G^{2,R}[-1,1] \subset \mathcal{G}_{2,R',0}(-1,1)$, hence $R_a \leq \mathfrak{R}_a$. On the other hand, let $0 < R < \mathfrak{R}_a$ and $\varphi \in \mathcal{G}_{2,R,0}(-1,1)$ interpolating $(a_n)$. From \cite[Lemma 3.7]{reachable_martin} we can further assume that $\varphi$ is flat at $\pm 1$, so that in particular the Poincaré inequality applies to $\varphi$ and and all its successive derivatives. We easily deduce that for all $0 < R' < R$ there holds $\varphi \in G^{2,R'}[-1,1]$, hence $R' \leq R_a$. We deduce that $\mathfrak{R}_a \leq R_a$, which concludes the proof. 
\end{proof}
In Proposition \ref{prop:borel_LR} we have given an example of a sequence $(a_k)$ whose Gevrey radius is $1$, but which cannot be interpolated by a function that is Gevrey of radius $1/\sqrt{2}$. Corollary \ref{coro:individual_loss} allows to obtain the other extreme case: the sequence 
\[
a_k := \frac{(2k)! 2^k i^k}{\sqrt{k}},\quad a_0 := 0,
\]
can be interpolated by a function which is Gevrey of order $2$ and radius $1/\sqrt{2}-\epsilon$, for all $\epsilon > 0$ (see \eqref{eq:asymptotics_polylog}).

\appendix
\section{Appendix: more on the interpolation problem}
\label{app}
In this Appendix, we provide a detailed analysis of the interpolation problem in Gevrey classes of order $s > 1$, notably by completing the proof of Theorems 1 and 1a of \cite{mitjagin}, which is only sketched in the cited paper. \newline
\newline
Let us first clarify Remark \ref{rem:MMR}: in \cite[Theorem 3.2]{reachable_martin}, it is claimed that for every sequence $(a_n)$ such that 
\[
    |a_n| \lesssim \frac{(2n)!}{R_0^{2n}},
\]
and for every $\epsilon > 0$, the sequence $(a_n)$ can be interpolated by a function $\varphi \in C^\infty(\mathbb{R})$ such that
\[
    |\varphi^{(n)}(t)| \lesssim \frac{(2n)!}{(\rho^{MRR}R_0 - \epsilon)^{2n}},
\]
where  
\[
\rho^{MRR}:= \exp\left( -\frac{1}{2e} \right)\approx 0.832 > 0.707 \approx \frac{1}{\sqrt{2}} = \rho_2.
\]
This contradicts Proposition \ref{prop:borel_LR} and \cite[Theorem 1a]{mitjagin}, which impose the loss factor $\rho^{MRR}$ to be $\geq 1/\sqrt{2}$. 

We believe that, unfortunately, the proof of \cite[Theorem 3.2]{reachable_martin} is flawed. Indeed, to go further into the details of the proof in \cite{reachable_martin}, the essence of their Theorem 3.2 lies in their Proposition 3.6. To prove this last Proposition, the authors build $f(x) = \sum_p d_p \varphi_p(x) x^p/p!$ where each $\varphi_p$ is built according to their Corollary 3.5. The constant $C$ appearing in their Corollary 3.5 implicitly depends on $\delta$ and on the sequence $(a_k)$. However, in the proof of their  Proposition 3.6, this dependency is not taken into account, and the constant $C$ appearing in the estimate of $\varphi_p^{(j)}(x)$ page 195 \textit{a priori} depends at least on $p$, which prevents one from concluding as the authors claim. 

Note further that \cite[Proposition 3.6]{reachable_martin} may be used to show that the interpolation problem can be solved in Gevrey classes of order $s>1$, with loss factor 
\[
\rho_s^{MRR} := \exp\left( - \frac{1}{es} \right). 
\]
As explained above, we believe that the proof of \cite[Proposition 3.6]{reachable_martin} is flawed. In fact, for $1 < s < 3$ we have $\rho_s^{MRR}> \rho_s$, hence a contradiction with Mitjagin's results. For $s>4$ we have $\rho_s^{MRR} < \rho_s$, hence $\rho_s^{MRR}$ is a valid loss factor, although sub-optimal. 

We believe this is a minor inaccuracy. Indeed, the result itself of \cite[Theorem 3.2]{reachable_martin} might be corrected to yield a valid result, by considering the loss factor of \cite[Theorem 1]{mitjagin}, whereas the proof of \cite[Theorem 3.2]{reachable_martin} might be corrected to yield a valid result, probably with a suboptimal loss factor, see \cite[Theorem 3.6]{petzsche}. As far as we are concerned, whenever Theorem 3.2 or Proposition 3.6 from \cite{reachable_martin} is used, the resulting assertion can be corrected by merely modifying a numerical constant. Moreover, this does not detract from the importance of the article \cite{reachable_martin}, which is the first to successfully describe the reachable states of the heat equation using holomorphic functions.
\subsection{Generalities}
In this subsection we study the interpolation problem in a rather general setting. In order to remain close to Mitjagin's notations we change our convention for Gevrey functions. This is a minor modification and we shall explain how to relate the loss factors between the two conventions. 
\begin{Definition}\label{def:carl}
Let
\[
-\infty < a < b < \infty,\quad c \in [a,b], \quad A > 0,\quad 1 \leq \Gamma < \infty, \quad 1 < s < \infty.
\]
We say that the interpolation problem associated with $(a,b,c,A,s)$ is solvable with loss $\Gamma$ if, for every sequence of complex numbers $(a_n)$ such that 
\[
\exists C > 0,\quad \forall n \in \mathbb{N},\quad |a_n| \leq C A^n n^{ns},
\]
and for every $\epsilon > 0$, there exists a function $\varphi \in C^\infty[a,b]$ such that 
\[
\forall n \in \mathbb{N},\quad \varphi^{(n)}(c) = a_n,
\]
and 
\[
\exists C' > 0, \quad \forall n \in \mathbb{N},\quad \forall t \in [a,b],\quad |\varphi^{(n)}(t)| \leq C' (\Gamma A+\epsilon)^n n^{ns}.
\]
\end{Definition}
If $\Gamma_s$ stands for the optimal loss factor for the above definition and $\rho_s$ stands for the optimal loss factor with the conventions used in \eqref{eq:sequence_gevrey} and \eqref{eq:interpolate_gevrey}, an application of Stirling's formula yields $\rho_s = \Gamma_s^{-1/s}$. 

Definition \ref{def:carl} \textit{a priori} depends on the parameters $(a,b,c,A,s)$. We claim that actually it does not depend on $a,b,c,A$ when $s$ is fixed, in the sense that changing $a,b,c$ and $A$ does not affect the loss factor $\Gamma$.
\begin{Proposition}\label{prop:carl_indep}
Fix $s > 1$ and assume that the interpolation problem associated with $(a,b,c,A,s)$ is solvable with loss $\Gamma$. Then, the interpolation problem associated with $(a',b',c',A',s)$ is solvable with the same loss factor $\Gamma$. 
\end{Proposition}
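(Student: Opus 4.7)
The plan is to show that the class of tuples $(a,b,c,A,s)$ admitting interpolation with loss factor $\Gamma$ (for fixed $s>1$) is stable under two operations: (i) an affine change of variable $t\mapsto\alpha t+\beta$ with $\alpha>0$, and (ii) enlargement of the ambient interval. A suitable composition of (i) and (ii), followed by a trivial restriction, sends the hypothesis tuple $(a,b,c,A,s)$ to any prescribed target $(a',b',c',A',s)$, which gives the conclusion.

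\emph{Affine invariance.} Given a target sequence $(a_n')$ with $|a_n'|\leq C(A')^n n^{ns}$, set $\alpha=A'/A$ and $b_n:=a_n'/\alpha^n$, so that $|b_n|\leq CA^n n^{ns}$. The hypothesis furnishes $\varphi\in C^\infty[a,b]$ with $\varphi^{(n)}(c)=b_n$ and $|\varphi^{(n)}(t)|\leq C'(\Gamma A+\epsilon)^n n^{ns}$. Define
\[
\tilde\varphi(t):=\varphi(\alpha(t-c')+c),
\]
which satisfies $\tilde\varphi^{(n)}(c')=\alpha^n b_n=a_n'$ and $|\tilde\varphi^{(n)}(t)|\leq C'(\Gamma A'+\alpha\epsilon)^n n^{ns}$, so the loss factor is still $\Gamma$ (up to replacing $\epsilon$ by $\alpha\epsilon$, which is arbitrary). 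The function $\tilde\varphi$ is defined on the preimage interval $[c'+(a-c)/\alpha,\,c'+(b-c)/\alpha]$; for this interval to contain the target $[a',b']$, we need $(c-a)\geq\alpha(c'-a')$ and $(b-c)\geq\alpha(b'-c')$. This containment is precisely what the second step supplies.

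\emph{Interval enlargement via Gevrey cutoff.} To meet the containment above, I would first upgrade the hypothesis from $[a,b]$ to any larger interval $[\tilde a,\tilde b]\supset [a,b]$ (keeping $c$, $A$, and $\Gamma$ fixed). Pick $1<s'<s$ and, using the Denjoy-Carleman construction, a cutoff $\chi\in C^\infty[\tilde a,\tilde b]$ with $\chi\equiv1$ on a neighborhood of $c$, compactly supported in the interior of $[a,b]$ viewed inside $[\tilde a,\tilde b]$, and satisfying $|\chi^{(n)}(t)|\leq K^{n+1}n^{ns'}$. Given the interpolant $\varphi$ on $[a,b]$ produced by the hypothesis, let $\tilde\varphi$ be the extension of $\chi\varphi$ by zero on $[\tilde a,\tilde b]$; then $\tilde\varphi^{(n)}(c)=\varphi^{(n)}(c)$. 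A Leibniz expansion
\[
|\tilde\varphi^{(n)}(t)|\leq C'K\sum_{k=0}^n\binom{n}{k}K^k k^{ks'}(\Gamma A+\epsilon)^{n-k}(n-k)^{(n-k)s}
\]
combined with the identity $n!^{1-s}=\binom{n}{k}^{1-s}k!^{1-s}(n-k)!^{1-s}$ reduces the bound to $(\Gamma A+\epsilon)^n n^{ns}$ times a convergent series $\sum_{k\geq 0}k!^{s'-s}(K/(\Gamma A+\epsilon))^k$, which converges because $s'<s$ makes $k!^{s'-s}$ decay faster than any geometric sequence. This preserves the loss factor $\Gamma$.

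\emph{Conclusion.} Applying the enlargement step to pass from $[a,b]$ to $[\tilde a,\tilde b]$ chosen with $\tilde a\leq c-\alpha(c'-a')$ and $\tilde b\geq c+\alpha(b'-c')$ (where $\alpha=A'/A$), then the affine step, and finally a trivial restriction to $[a',b']$, produces the desired interpolant for the target tuple with the same loss factor $\Gamma$. The main technical obstacle is the Leibniz estimate in the cutoff step; the key mechanism is that multiplication by a Gevrey-$s'$ function with $s'<s$ does not deteriorate the Gevrey-$s$ growth rate, only the multiplicative constant, because the Gevrey-$s'$ scale is negligible compared to the Gevrey-$s$ scale.
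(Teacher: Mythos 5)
Your affine step and your Gevrey-$s'$ cutoff estimate are sound, and in the case where the hypothesis interpolation point $c$ lies in the \emph{interior} of $[a,b]$ your two-step scheme essentially reproduces the paper's reductions (translation, dilation, multiplication by a lower-order Gevrey cutoff as in \cite[Lemma 3.7]{reachable_martin}). The genuine gap is the case, explicitly allowed by Definition \ref{def:carl} since $c\in[a,b]$, where $c$ is an \emph{endpoint} of $[a,b]$. Your interval-enlargement step cannot enlarge past such a $c$: a cutoff $\chi\equiv 1$ on a neighbourhood of $c$ and compactly supported in the interior of $[a,b]$ does not exist when $c\in\{a,b\}$, and extending $\chi\varphi$ by zero beyond $c$ is not even $C^1$ there unless all the prescribed values $\varphi^{(n)}(c)=a_n$ vanish. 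Concretely, if the hypothesis tuple is $(0,1,1,A,s)$ (interpolation at the right endpoint, interpolant living only to the left of $c$) and the target is $(0,1,0,A,s)$ or $(-1,1,0,A,s)$, your containment condition $(b-c)\geq \alpha(b'-c')$ reads $0\geq \alpha(b'-c')>0$, and no orientation-preserving affine map combined with one-sided enlargement can repair this: you must manufacture the interpolant on the side of $c$ where the hypothesis gives you nothing.

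This endpoint-to-interior passage is precisely where the paper spends most of its proof. First the problems at $c=0$ and $c=1$ on $[0,1]$ are shown equivalent by applying the hypothesis to the twisted sequence $((-1)^n a_n)$ and reflecting, $\psi(t):=\varphi(1-t)$. Then, to pass from the endpoint problem $(0,1,0,A,s)$ to the interior problem $(-1,1,0,A,s)$, the sequence is split into its odd and even parts, each part is interpolated on $[0,1]$ at $0$, and the odd (resp.\ even) reflection of the corresponding interpolant is checked to be $C^\infty$ across $0$ because the one-sided derivatives of all orders match; the sum of the two reflected functions solves the two-sided problem with the same loss $\Gamma$. (An equivalent patch for your scheme: interpolate $(a_n)$ on $[c',b']$ at its left endpoint, interpolate $((-1)^n a_n)$ and reflect to get a function on $[a',c']$ with the right derivatives at its right endpoint, and glue at $c'$, where all one-sided derivatives agree.) Without some such argument, your composition of affine maps, enlargements and restrictions only proves the proposition when the hypothesis point $c$ is interior, which is strictly weaker than the statement; note that the paper does use the endpoint cases (e.g.\ in the proposition comparing $\opRange \mathcal{B}_{\pm 1}^R$).
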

\begin{proof}
Assume that the interpolation problem associated with $(a,b,c,A,s)$ is solvable with loss factor $\Gamma$. We show that for every $(a',b',c',A',s)$ as above, the associated problem is solvable with loss $\Gamma$. Clearly, for any translation $\tau \in \mathbb{R}$, the problem associated with $(a+\tau,b+\tau,c+\tau,A,s)$ is solvable with the same loss factor. We may therefore  assume that $a = a'=0$. Let us then use a dilatation: if $\varphi \in C^\infty[0,b]$ is such that 
\[
|\varphi^{(n)}(t)| \leq C' (\Gamma A+\epsilon)^n n^{ns},
\] 
then for $\lambda > 0$ we consider $\varphi_\lambda(t):= \varphi(\lambda t)$, which is such that 
\[
\varphi_\lambda \in C^\infty[0,b/\lambda],\quad \varphi_\lambda^{(n)}(t) = \lambda^n \varphi^{(n)}(t),\quad |\varphi_\lambda^{(n)}(t)| \leq C'(\Gamma A \lambda + \epsilon \lambda)^n n^{ns}.
\]
Thus the problem of parameters $(0,b/\lambda,c/\lambda,A \lambda,s)$ is solved with loss $\Gamma$. We can therefore assume that $b=b'=1$. \par 
We then claim that if the problem of parameters $(0,1,c,A,s)$ is solvable with loss $\Gamma$ for $0 < c < 1$, then for every $c' \in (0,1)$ the problem of parameters $(0,1,c',A,s)$  is solvable with loss $\Gamma$. For, let $\epsilon>0$ and $(a_n)$ be any sequence such that 
\[
|a_n| \leq C  A^n n^{ns}.
\]
We invoke the solvability of the problem associated with $(0,1,c,A,s)$ to take $\varphi \in C^\infty[0,1]$ such that 
\[
\varphi^{(n)}(c) = a_n,\quad |\varphi^{(n)}(t)| \leq C'  (\Gamma A+\epsilon)^n n^{ns}.
\]
We then consider 
\[
\psi(t):= \varphi(t-c'+c)\chi(t),
\]
where $\chi \in C^\infty(\mathbb{R})$ is Gevrey of order $r\in (1,s)$, is constant to 1 on a small neighborhood of $c'$, and flat at $c'-c$ and $1+c'-c$. Since $1<r<s$, the constructed function $\psi$ has the same Gevrey regularity as $\varphi$ in the sense that 
$$|\psi^{(n)}(t)| \leq C'' (\Gamma A+\epsilon)^n n^{ns},$$
(see \cite[Lemma 3.7]{reachable_martin}). Moreover, since $\chi$ is $0$ outside of $(c'-c, 1+c'-c)$, $\psi$ is well-defined on $\mathbb R$ (so it is also defined on $[0,1]$), and has the same derivatives at $c'$ than $\varphi$ has at $c$. Thus the problem of parameters $(0,1,c',A,s)$ is solvable, with the same loss $\Gamma$.\par 
Next we show that the cases $c=0$ or $c=1$ are equivalent. Assume for instance that the problem of parameters $(0,1,0,A,s)$ is solvable with loss $\Gamma$. For any sequence $(a_n)$ such that 
\[
|a_n| \leq C A^n n^{ns},
\]
we invoke its solution for the sequence $((-1)^na_n)$ to obtain $\varphi \in C^\infty[0,1]$ such that 
\[
\varphi^{(n)}(c) = (-1)^n a_n,\quad |\varphi^{(n)}(t)| \leq C' (\Gamma A+\epsilon)^n n^{ns}.
\]
We then put $\psi(t):= \varphi(1-t)$, which has the same Gevrey regularity as $\varphi$ and is such that
\[
\psi^{(n)}(1) = (-1)^n \varphi^{(n)}(0) = a_n.
\]
The converse implication is proved in the same way, so that $c=0$ and $c=1$ are equivalent. \par 
Assume now that the problem of parameters $(0,1,0,A,s)$ is solvable with loss $\Gamma$. We show that, for every $0 < c < 1$ the problem of parameters $(0,1,c,A,s)$ is solvable with loss $\Gamma$. In fact we will show that $(-1,1,0,A,s)$ is solvable with loss $\Gamma$, which is easily seen as a stronger statement, by translation and dilatation arguments. We let $(a_n)$ be a sequence with 
\[
|a_n| \leq C A^n n^{ns}.
\]
We introduce the ``odd'' and ``even'' sequences
\[
a_n^o = \left\lbrace \begin{array}{rl}
a_{2k+1}, & n=2k+1, \\
0, & n = 2k,
\end{array}\right.,\quad a_n^e = \left\lbrace \begin{array}{rl}
a_{2k}, & n=2k, \\
0, & n = 2k+1,
\end{array}\right.
\]
and take $\varphi,\psi \in C^\infty[0,1]$ so that 
\[
|\varphi^{(n)}(t)|,|\psi^{(n)}(t)| \leq C' (\Gamma A+\epsilon)^n n^{ns},\quad \varphi^{(n)}(0) = a_n^o,\quad \psi^{(n)}(0) = a_n^e.
\]
We then consider $\tilde{\varphi}$ the odd extension of $\varphi$ and $\tilde{\psi}$ the even extension of $\psi$. Clearly, they are both $C^\infty$ on $[-1,0]$ and $[0,1]$, and on each of these two intervals they satisfy the  Gevrey estimate 
\[
|\tilde{\psi}^{(n)}(t)|,|\tilde{\varphi}^{(n)}(t)| \leq C' (\Gamma A+\epsilon)^n n^{ns}.
\]
To check that $\tilde{\varphi}$ and $\tilde{\psi}$ satisfy the required Gevrey estimate on $[-1,1]$, it is thus enough to check that their derivatives at $0^\pm$ both match. We have 
\[
\tilde{\varphi}^{(n)}(0^+) = \varphi^{(n)}(0) = a_n^o,\quad \tilde{\varphi}^{(n)}(0^-) = (-1)^{n+1}\varphi^{(n)}(0) = (-1)^{n+1}a_n^o,
\]
and these two values agree. Similarly,
\[
\tilde{\psi}^{(n)}(0^+) = \psi^{(n)}(0) = a_n^e,\quad \tilde{\psi}^{(n)}(0^-) = (-1)^{n}\psi^{(n)}(0) = (-1)^{n}a_n^e,
\]
so that $\phi:= \tilde{\varphi} + \tilde{\psi}$ solves the problem. \par 
We can thus put $c=c'=0$, hence we are left to show that if the problem $(0,1,0,A,s)$ is solvable with loss $\Gamma$, then, for every $A' > 0$, the problem $(0,1,0,A',s)$ is solvable with loss $\Gamma$. Assume that the problem associated with $(0,1,0,A,s)$ is solvable with loss $\Gamma$, using the dilatation $\varphi_\lambda$ we see that for every $\lambda > 0$, the problem associated with $(0,1/\lambda,0,A\lambda,s)$ is solvable with loss $\Gamma$. We distinguish two cases: if $\lambda \leq 1$, then the problem associated with $(0,1, 0, \Gamma, A\lambda,s)$ is trivially solvable. If on the contrary $\lambda > 1$, using a cutoff that is Gevrey of order $1 < r < s$, constant to $1$ near $0$ and flat at $1/\lambda$, we see that the problem associated with $(0,1, 0, \Gamma, A\lambda,s)$ is also solvable with loss $\Gamma$, as in the case of the equivalence for the parameter $c$. Since $\lambda > 0$ is arbitrary, this finishes the proof. 
\end{proof}
Therefore, the only relevant numbers are $\Gamma$ and $s$. 
\subsection{A proof of Mitjagin's Theorem}\label{subsec:mit}
We now provide a detailed proof of Theorems 1 and 1a of \cite{mitjagin}. Recall the notation
\[
\Gamma_\beta := \cos^{-\beta} \frac{\pi}{2\beta},\quad \beta > 1.
\]
We begin by showing that $\Gamma_\beta$ is an admissible loss factor. 
\begin{Theorem}\label{theo:mitjagin1}
Let $\beta > 1$ and $B,\epsilon > 0$ be fixed. Then for every sequence $(a_n)$ such that 
\[
\exists C > 0,\quad \forall n \in \mathbb{N},\quad |a_n| \leq C B^n n^{n \beta},
\]
there exists $\varphi \in C^\infty[-1,1]$ such that 
\[
\exists C' > 0,\quad \forall t \in [-1,1],\quad \forall n \in \mathbb{N}, \quad |\varphi^{(n)}(t)| \leq C' (\Gamma_\beta B +\epsilon)^n n^{n \beta}
\]
together with
\[
\forall n \in \mathbb{N},\quad \varphi^{(n)}(0) = a_n.
\]
\end{Theorem}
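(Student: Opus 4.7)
The plan is to construct the interpolating function via a Fourier representation
\[
\varphi(t) = \int_\mathbb{R} e^{it\xi} g(\xi)\, d\xi,
\]
where $g$ is a weight chosen so that the moment identities $\int (i\xi)^n g(\xi) d\xi = a_n$ hold for every $n \in \mathbb{N}$, which automatically yields the interpolation condition $\varphi^{(n)}(0) = a_n$. The Gevrey bound on $\varphi^{(n)}$ on the full real line will then be controlled by the decay of $g$; reducing to the interval $[-1,1]$ at the very end is done by multiplying by a Gevrey cutoff of order $r \in (1,\beta)$ via \cite[Lemma 3.7]{reachable_martin}, which does not alter the derivatives at $0$ and does not worsen the Gevrey constant.

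The existence of $g$ with prescribed moments and real-axis decay $|g(\xi)| \leq C e^{-c|\xi|^{1/\beta}}$, with $c$ of order $1/B^{1/\beta}$, follows from a Carleson-type construction based on a suitable Borel summation of $(a_n)$. However, estimating $|\varphi^{(n)}(t)|$ with this real-axis decay alone only produces a non-sharp loss factor, strictly larger than $\cos^{-\beta}(\pi/(2\beta))$. To recover the sharp constant $\Gamma_\beta$, I would require $g$ to extend holomorphically to a sector $|\arg \xi| < \pi\beta/2 - \delta$ with an analogous decay estimate inside, and then deform the contour of
\[
\varphi^{(n)}(t) = \int_\mathbb{R} (i\xi)^n e^{it\xi} g(\xi)\, d\xi
\]
onto rays $\xi = r e^{\pm i\theta}$ with $\theta$ close to $\pi\beta/2$. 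On such a ray one has $|(i\xi)^n g(\xi)| \lesssim r^n e^{-c r^{1/\beta} \cos(\theta/\beta)}$, and a Stirling-type evaluation of the resulting integral produces exactly the cosine factor $\cos^\beta(\theta/\beta)$ in the loss constant, tending to $\cos^\beta(\pi/(2\beta)) = 1/\Gamma_\beta$ as $\theta \to \pi\beta/2$.

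The main obstacle is to reconcile the two competing demands on $g$: prescribed moments \emph{and} holomorphic extension to a sector of opening $\pi\beta$ with Gevrey-type decay inside. Carleson's original construction only controls the real-axis behaviour; Mitjagin's insight, which I would follow, is to build $g$ as the Borel transform of an entire function of order $1/\beta$ and appropriate type, directly constructed from $(a_n)$, and then to apply a Phragm\'en--Lindel\"of principle on the sector to transfer the decay estimate inside. Once this is achieved, everything else---the Stirling bookkeeping on the deformed contour, an $\epsilon$-room argument to absorb sub-exponential factors into the loss constant, and the final Gevrey cutoff to $[-1,1]$---is a careful but routine computation.
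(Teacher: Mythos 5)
Your plan is a \emph{primal} construction (build $\varphi=\int e^{it\xi}g(\xi)\,d\xi$ from a weight $g$ solving the moment problem, then obtain the sharp constant by contour deformation), which is genuinely different from the paper's proof: the paper never constructs $\varphi$ directly, but recasts the statement as a range inclusion between the Borel map and an inclusion map on Hilbert spaces of Gevrey functions and sequences, applies Douglas's lemma, and reduces everything to a coefficient estimate for concentrated functionals $f=\sum a_n\delta^{(n)}$ (Lemma~\ref{lem:mitjagin}). However, as written your proposal has a genuine gap exactly at the step you yourself call the ``main obstacle'' and then dispatch in one sentence. The existence of a weight $g$ with \emph{exact} moments $a_n$, a holomorphic extension to a sector of opening close to $\pi\beta$, and the decay $|g(re^{i\theta})|\lesssim e^{-cr^{1/\beta}\cos(\theta/\beta)}$ throughout that sector \emph{is} the theorem at the sharp constant; neither Carleson's real-axis construction nor a generic ``Borel transform of an entire function of order $1/\beta$ built from $(a_n)$'' provides it (note that $\sum a_n z^n/\Gamma(\beta n+1)$ has in general only a finite radius of convergence when $|a_n|\le CB^n n^{n\beta}$, so there is no canonical entire function of order $1/\beta$ attached to the data). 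Moreover, the tool you invoke to bridge the gap cannot do so: a Phragm\'en--Lindel\"of principle propagates \emph{upper bounds from the boundary rays} of a sector into its interior under an a priori order restriction; it cannot upgrade decay known only on the bisector (the real axis) to decay on interior rays. Concretely, to bound $e^{cz^{1/\beta}}g(z)$ on the half-sector $0\le\arg z\le \pi\beta/2-\delta$ you would need a bound on the slanted boundary ray, which is precisely the unknown. There is also an internal inconsistency in your contour bookkeeping: $\int_0^\infty r^n e^{-cr^{1/\beta}\cos(\theta/\beta)}dr\asymp \Gamma(\beta(n+1))\,(c\cos(\theta/\beta))^{-\beta(n+1)}$, so the per-$n$ loss is $\cos^{-\beta}(\theta/\beta)$, which equals $\Gamma_\beta$ at $\theta=\pi/2$ (the imaginary axis) and \emph{blows up} as $\theta\to\pi\beta/2$; your claimed limit $\cos^\beta(\theta/\beta)\to\cos^\beta(\pi/(2\beta))$ as $\theta\to\pi\beta/2$ is false, and deforming toward $\pi\beta/2$ worsens, not improves, the constant.

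The paper's duality route is designed to avoid exactly this trap: there Phragm\'en--Lindel\"of is used in the legitimate direction, on the \emph{growth} of the entire function $\Phi(s)=\langle f,e^{ist}\rangle$ attached to a concentrated functional (globally of order $1$ and type $0$, of order $1/\beta$ and finite type on $\mathbb{R}$), yielding the same order-$1/\beta$ growth on all of $\mathbb{C}$ with the exponent divided by $\cos(\pi/(2\beta))$; Cauchy's inequalities then give $|f_n|\lesssim \bigl((1+\delta)/(B\cos^{\beta}\tfrac{\pi}{2\beta})\bigr)^n n^{-n\beta}$, which is where $\Gamma_\beta$ enters, and Douglas's lemma converts this dual estimate into surjectivity of the Borel map. (Your peripheral steps -- the Gevrey-$r$ cutoff with $1<r<\beta$ via \cite[Lemma 3.7]{reachable_martin} and the $\epsilon$-room bookkeeping -- are fine.) If you wish to keep a primal construction in the spirit of \cite{mitjagin}, you must exhibit $g$ with the sectorial decay by an explicit formula (for instance a Mittag-Leffler-type kernel, whose directional types are precisely what make $\cos(\pi/(2\beta))$ appear, cf.\ the optimality argument in the paper), rather than hope to derive that decay from real-axis information by Phragm\'en--Lindel\"of.
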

In order to show this theorem we need some preparation. For given parameters $B > 0$ and $\beta > 1$ we define $C^\infty_{\beta B}$ as the set of these $\phi \in C^\infty[-1,1]$ such that 
\[
\|\phi\|_{C^\infty_{\beta B}}:= \sup_{n \in \mathbb N}\sup_{t \in [-1, 1]} \frac{|\phi^{(n)}(t)|}{B^n n^{n\beta}} < \infty.
\]
This defines a Banach space. 

A functional $f$ on $C^\infty_{\beta B}$ is said to be concentrated (at 0) if it vanishes on any $\phi \in C^\infty_{\beta B}$ that is equal to $0$ around $t=0$. For such a functional $f$ we construct an entire series, roughly speaking the Fourier transform of $f$, in the following Lemma. We introduce a function $h \in C^\infty(\mathbb{R})$ such that 
\[
|h^{(n)}(t)| \leq CA^n n^{n \gamma},\quad h(t) = 1 ~~ (|t| < 1/2),\quad h(t) = 0 ~~ (|t| > 1),
\]
for some constants $1 < \gamma < \beta$ and $A,C > 0$. 
\begin{Lemma}\label{lem:mitjagin} Let $f$ be a concentrated functional on $C^\infty_{\beta B}$, $\epsilon > 0$ and $h$ as above. We then have 
\begin{enumerate}
\item For every $s \in \mathbb{C}$, the function 
\[
\phi_s(t):= e^{ist} h(t/\epsilon),
\]
is in $C^\infty_{\beta B}$, and verifies the estimate: for any $\delta > 0$, we have 
\begin{equation}\label{eq:growth_phi}
\|\phi_s\|_{C^\infty_{\beta,B}} \leq C_1 \exp \left[\epsilon |\tau| + \frac{\beta}{e} \left( \frac{1+\delta}{B} r \right)^{1/\beta} \right],
\end{equation}
where $r = |s|$, $s = \xi + i \tau$, and $C_1 = C_1(C,A,\epsilon,\delta,\beta,B)$. 
\item The function $\Phi(s):= \langle f , \phi_s \rangle$ is well-defined, entire on $\mathbb{C}$, and does not depend on $\epsilon > 0$ or $h$ as above. 
\item The function $\Phi$ satisfies the estimate 
\begin{equation}\label{eq:growth_ell}
|\Phi(s)|\leq  \|f\| C_1 \exp\left(\frac{\beta}{e} \left( \frac{1+\delta}{B} r \right)^{1/\beta} \frac{1}{\cos(\pi /(2 \beta))}\right).
\end{equation}
\item We have the estimation of the Taylor coefficients of $\Phi$
\begin{equation}\label{eq:growth_coef}
|f_n| \leq C_1 \|f\| \left( \frac{1+\delta}{B \cos^{\beta} \frac{\pi}{2 \beta}}\right)^n n^{-n\beta}, \quad \Phi(s) = \sum_{n=0}^\infty f_n s^n.
\end{equation}
\end{enumerate} 
\end{Lemma}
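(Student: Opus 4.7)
The plan is to establish the four items sequentially, with item 4 following from item 3 by Cauchy's inequalities. Item 1 is the analytical crux. Leibniz's formula gives
\[
\phi_s^{(n)}(t) = e^{ist}\sum_{k=0}^n \binom{n}{k}(is)^{n-k}\epsilon^{-k}h^{(k)}(t/\epsilon).
\]
On $\opsupp h(\cdot/\epsilon)$ one has $|e^{ist}|\leq e^{\epsilon|\tau|}$; combining with the Gevrey-$\gamma$ estimate on $h$ and dividing by $B^n n^{n\beta}$, the supremum over $n$ and $t$ reduces to bounding the combinatorial quantity $\binom{n}{k}r^{n-k}(A/\epsilon)^k k^{k\gamma}/(B^n n^{n\beta})$. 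The dominant contribution for large $r$ comes from $k=0$: the maximum in $n$ of $(r/(Bn^\beta))^n$ is reached near $n_\ast \sim (r/B)^{1/\beta}/e$ and equals $\exp(\tfrac{\beta}{e}(r/B)^{1/\beta})$. The remaining $k\geq 1$ terms are handled by absorbing a slight inflation of the base $B\rightsquigarrow B/(1+\delta)$ into the exponent and lumping the $(A/\epsilon)^k k^{k\gamma}$ factors into the constant $C_1$, which produces \eqref{eq:growth_phi}.

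For item 2, well-definedness of $\Phi(s)$ is immediate from item 1 and the continuity of $f$ on $C^\infty_{\beta B}$. Analyticity follows by expanding $e^{ist} = \sum_n(ist)^n/n!$ and checking that the partial sums $h(t/\epsilon)\sum_{n\leq N}(ist)^n/n!$ converge to $\phi_s$ in the $C^\infty_{\beta B}$-norm, uniformly on compact sets in $s$; hence $s\mapsto\phi_s$ is a $C^\infty_{\beta B}$-valued holomorphic function and $\Phi=f\circ\phi$ is entire. Independence of the choice of $\epsilon$ and $h$ uses the concentration of $f$: for any two admissible pairs $(\epsilon_i,h_i)$, the difference $e^{ist}[h_1(t/\epsilon_1)-h_2(t/\epsilon_2)]$ vanishes on a neighborhood of $0$ (both $h_i$ equal $1$ there), hence lies in the kernel of $f$.

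Item 3 is the main step. The bound from item 1 reads
\[
|\Phi(s)|\leq \|f\|C_1(\epsilon)\exp\bigl(\epsilon|\tau|+K_0\, r^{1/\beta}\bigr),\qquad K_0:=\tfrac{\beta}{e}((1+\delta)/B)^{1/\beta},
\]
valid for every $\epsilon>0$. Using $|\tau|\leq|s|$ and letting $\epsilon\to 0$ shows that $\Phi$ is of minimal exponential type; a careful choice $\epsilon=\epsilon(r)$ in terms of $|s|$ (exploiting the mild $\epsilon$-dependence of $C_1$ that is transparent in the combinatorics of item 1) shows moreover that $\Phi$ is of order $\leq 1/\beta$. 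I then invoke the trigonometric convexity of the Phragm\'en-Lindel\"of indicator $h_{1/\beta}$ at order $1/\beta$: the real-axis bound yields $h_{1/\beta}(0),\,h_{1/\beta}(\pi)\leq K_0$, and convexity on the interval $[0,\pi]$ (of length $\pi<\pi\beta=\pi/\rho$ with $\rho=1/\beta$) gives
\[
h_{1/\beta}(\theta)\leq K_0\,\frac{\sin((\pi-\theta)/\beta)+\sin(\theta/\beta)}{\sin(\pi/\beta)}\leq\frac{K_0}{\cos(\pi/(2\beta))},
\]
the last step via $\sin(\pi/\beta)=2\sin(\pi/(2\beta))\cos(\pi/(2\beta))$ and maximization of the numerator at $\theta=\pi/2$. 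Symmetry in the lower half plane covers the remaining directions and delivers \eqref{eq:growth_ell}.

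Finally, item 4 follows from item 3 by Cauchy's inequalities: $|f_n|\leq\|f\| C_1\min_{r>0}\exp(Kr^{1/\beta})r^{-n}$ with $K=K_0/\cos(\pi/(2\beta))$. The minimum is attained at $r_\ast=(n\beta/K)^\beta$ and equals $(eK/(n\beta))^{n\beta}$; substitution yields \eqref{eq:growth_coef}. The main obstacle is the Phragm\'en-Lindel\"of step in item 3, specifically the justification that $\Phi$ has order $\leq 1/\beta$ so that the indicator machinery applies. An alternative route would be to work directly with the Poisson representation of $\log|\Phi|$ in the upper half plane, anchored by the sharp evaluation $\int_{\mathbb{R}}|\xi|^{1/\beta}(\xi^2+1)^{-1}\,d\xi=\pi/\cos(\pi/(2\beta))$, which produces the loss factor without needing to pin down the order of $\Phi$ precisely.
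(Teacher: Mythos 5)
Your items 1, 2 and 4 follow the paper's own route (Leibniz expansion, the sup-in-$n$ computation giving $\exp\bigl(\tfrac{\beta}{e}((1+\delta)r/B)^{1/\beta}\bigr)$, holomorphy of $s\mapsto\phi_s$ plus concentration for independence, and Cauchy's inequalities optimized at $r_\ast=(n\beta/K)^\beta$). One small caveat in item 1: you cannot literally ``lump the $(A/\epsilon)^k k^{k\gamma}$ factors into $C_1$'', since they are unbounded in $k$; the clean way (the paper's) is to sum the Leibniz terms by the binomial theorem into $\bigl(r+\tfrac{A n^\gamma}{\epsilon}\bigr)^n$ and then compare with $B^n n^{n\beta}$ in separate regimes of $r$ versus $n^\lambda$, $\gamma<\lambda<\beta$; your $\delta$-inflation idea is the right one, but the bookkeeping must be done on the summed quantity.

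The genuine gap is in item 3, and you have flagged it yourself. The trigonometric-convexity statement for the indicator $h_{1/\beta}$ on the half-plane (an angle of opening $\pi$) cannot be invoked as a black box: it requires an a priori growth restriction of $\Phi$ \emph{inside} the half-plane (order $<1$, or order $1$ with minimal type handled by a limiting argument), otherwise $h_{1/\beta}(\theta)$ may a priori be $+\infty$ at interior angles and the convexity inequality says nothing. Your proposed repair --- choosing $\epsilon=\epsilon(r)$ and claiming that the ``mild'' $\epsilon$-dependence of $C_1$ yields order $\leq 1/\beta$ --- is unsubstantiated and, as stated, false for a fixed cutoff $h$ of Gevrey order $\gamma>1$: tracking the constants gives $C_1(\epsilon)\lesssim \exp\bigl(c\,\epsilon^{-1/(\beta-\gamma)}\bigr)$, and optimizing $\epsilon r+c\,\epsilon^{-1/(\beta-\gamma)}$ over $\epsilon$ only yields order $\leq 1/(1+\beta-\gamma)$, which is strictly larger than $1/\beta$ because $\gamma>1$ (one could recover $1/\beta$ only by additionally letting $\gamma\downarrow 1$ and using the independence of $\Phi$ from $h$, which you do not do). More importantly, chasing order $\leq 1/\beta$ is a red herring: what is needed, and what is free, is only that $\Phi$ has order $1$ and minimal type on $\mathbb{C}$ (let $\epsilon\to 0$ at fixed $s$). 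The paper closes the gap precisely there, by proving a tailored Phragm\'en--Lindel\"of lemma: multiply by $\exp\bigl(-\tfrac{K}{\cos(\pi\rho/2)}z^{\rho}-\kappa z\bigr)$ with $\rho=1/\beta$, apply the classical principle on the two quarter-planes and the extended maximum principle on the half-plane, and let $\kappa\to 0^+$; the factor $e^{-\kappa z}$ is exactly what absorbs the borderline order-$1$/minimal-type growth that your indicator argument cannot handle. Your Poisson-representation alternative is only gestured at and has the same defect: the harmonic-majorant (Nevanlinna-class) hypothesis in the half-plane is again an a priori growth assumption on $\Phi$ that must be established first. Your final convexity computation (the interpolant maximized at $\theta=\pi/2$ giving the loss $1/\cos(\pi/(2\beta))$) is arithmetically correct, so once the Phragm\'en--Lindel\"of step is justified in the paper's manner the rest of your argument goes through.
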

\begin{proof}
\begin{enumerate}
\item The function $\phi_s$ is clearly in  $C^\infty[-1,1]$, the goal is thus to estimate its derivatives. For this we use the Leibniz formula 
\[
\phi_s^{(n)}(t) = \sum_{k=0}^n \left( \begin{array}{c}
n \\
k
\end{array}\right) (is)^ke^{ist} \frac{1}{\epsilon^{n-k}} h^{(n-k)} \left( \frac{t}{\epsilon} \right).
\]
Since $h \equiv 0$ outside $[-1,1]$, we deduce that
\[
\left| e^{ist} h^{(p)} \left( \frac{t}{\epsilon} \right) \right| \leq e^{\epsilon |\tau| } C A^p p^{\gamma p},\, p\in\mathbb N.
\] 
We deduce that
\begin{align*}
|\phi_s^{(n)}(t)| &\leq \sum_{k=0}^n \left( \begin{array}{c}
n \\
k
\end{array}\right) r^k e^{\epsilon |\tau|} C \left( \frac{A}{\epsilon} \right)^{n-k} (n-k)^{(n-k) \gamma} \\
&\leq Ce^{\epsilon |\tau| } \sum_{k=0}^n \left( \begin{array}{c}
n \\
k
\end{array}\right) r^k \left( \frac{A}{\epsilon} \right)^{n-k} n^{(n-k) \gamma} \\
&= Ce^{\epsilon |\tau| } \left( r + \frac{An^\gamma}{\epsilon}\right)^n,
\end{align*}
where recall that $r = |s|$. In view of the definition of the $ \|\cdot\|_{C^\infty_{\beta B}} $-norm, the estimate \eqref{eq:growth_phi} thus boils down to proving that
\[
\forall \delta > 0,\quad \exists C_1 > 0,\quad \forall n \in \mathbb{N},\quad \forall r > 0,\quad  \left( r + \frac{An^\gamma}{\epsilon}\right)^n
 \leqslant C_1  B^n n^{n\beta} \exp \left[ \frac{\beta}{e} \left( \frac{1+\delta}{B} r \right)^{1/\beta} \right].
\]
If $n = 0$ the estimate is obvious (put $C_1 = 1$), hence we may take $n \geq 1$ without loss of generality. We will obtain the above estimate by considering separately three ranges for the parameters $r$ and $n$. We introduce another parameter $\gamma < \lambda < \beta$, and the first range we consider is $r \leq n^\lambda$. For such $r,n$ have
\[
\left( r + \frac{An^\gamma}{\epsilon}\right)^n \leq \left( n^\lambda + \frac{An^\gamma}{\epsilon}\right)^n = n^{\lambda n} \left( 1 + \frac{An^{\gamma-\lambda}}{\epsilon}\right)^n \leq n^{\lambda n} C(\epsilon,A,\gamma,\lambda),
\]
where $C(\epsilon,A,\gamma,\lambda)$ is a constant only depending on $\epsilon,A,\gamma,\lambda$. Because $\lambda < \beta$, the term $n^{\lambda n}$ is negligible  comapred to $B^nn^{n\beta}$ as $n \rightarrow \infty$, hence 
\[
\left( r + \frac{An^\gamma}{\epsilon}\right)^n \leq C(\epsilon,A,B,\gamma,\lambda,\beta) B^n n^{n\beta},\quad r \leq n^\lambda.
\]
We then assume that $r \geq n^\lambda$, and compute 
\begin{align*}
\left( r + \frac{An^\gamma}{\epsilon} \right)^n &= \left( r + \frac{A(n^\lambda)^{\gamma/\lambda}}{\epsilon} \right)^n \\
&\leq \left( r + \frac{Ar^{\gamma/\lambda}}{\epsilon} \right)^n \\
&= r^n \left(  1 + \frac{Ar^{\frac{\gamma}{\lambda}-1}}{\epsilon} \right)^n.
\end{align*}
Because $0 < \gamma/\lambda < 1$, we have 
\[
\forall \delta > 0,\quad \exists R = R(A,\epsilon,\delta,\gamma/\lambda) > 0,\quad \forall r > R,\quad 1 + \frac{Ar^{\frac{\gamma}{\lambda}-1}}{\epsilon} \leq 1+ \delta.
\]
Fix $\delta >0$ arbitrary and introduce an $R > 0$ as above. The second regime we consider is $r \geq \max(n^\lambda,R)$, for which we find
\[
\left( r + \frac{An^\gamma}{\epsilon} \right)^n \leq r^n (1+\delta)^n.
\]
We then fix $r \geq \max(n^\lambda,R)$ and compute
\[
\sup_{x > 0} \frac{r^x(1+\delta)^x}{B^x x^{x \beta}} = \sup_{y > 0} \frac{D^y}{y^y},\quad D := \left( \frac{r(1+\delta)}{B}\right)^{1/\beta} \beta, \quad y = x \beta. 
\]
The above supremum is reached at $y = D/e$, for which
\[
\sup_{x > 0} \frac{r^x(1+\delta)^x}{B^x x^{x \beta}} = e^{D/e} = \exp\left[ \frac{1}{e} \left( \frac{r(1+\delta)}{B}\right)^{1/\beta} \beta \right].
\]
Thus for $n \geq 1$ and $r \geq \max(n^\lambda,R)$ we find 
\[
\left( r + \frac{An^\gamma}{\epsilon} \right)^n \leq \exp\left[ \frac{1}{e} \left( \frac{r(1+\delta)}{B}\right)^{1/\beta} \beta \right] B^n n^{n \beta}.
\]
The third and last regime we investigate is when $n \geq 1$ and $n^\lambda \leq r \leq \max(R,n^\lambda)$. If $R < n^\lambda$ we are back to the first regime, hence we are left with $n^\lambda \leq R$. There are only finitely many such values of 
$n$, for which 
\[
\left( r + \frac{An^\gamma}{\epsilon} \right)^n \leq C(A,\epsilon,\lambda,\gamma,R) = C(A,\epsilon,\lambda,\gamma,\delta).
\]
\item The fact that $\Phi$ is well-defined as a function $\mathbb{C} \rightarrow \mathbb{C}$ follows from the previous point. It is elementary to check that it is holomorphic on $\mathbb{C}$. Because $f$ is concentrated, the function $\Phi$ does not depend on $h$ or $\epsilon$. 
\item In order to show \eqref{eq:growth_ell}, we start from \eqref{eq:growth_phi} and deduce
\[
|\Phi(s)| = |\langle f , \phi_s \rangle | \leq \| f \| \|\phi_s\|_{C^\infty_{\beta,B}} \leq\|f\|  C_1 \exp \left[\epsilon |\tau| + \frac{\beta}{e} \left( \frac{1+\delta}{B} r \right)^{1/\beta} \right]
\] 
where, as recalled, we constantly use the notations $r = |s|$, $s = \xi + i \tau$.
In particular, the function $\Phi$ satisfies 
\begin{equation}\label{flp0}
\forall \epsilon > 0,\quad \exists C_\epsilon > 0,\quad \forall s \in \mathbb{C},\quad |\Phi(s)| \leq C_\epsilon e^{\epsilon r},
\end{equation}
\textit{i.e.} it is of order $1$ and of type $0$ on $\mathbb{C}$. Note however that on the real axis we have a better estimate, of the form
\begin{equation}\label{flp00}
\exists C,K > 0,\quad \forall \xi \in \mathbb{R},\quad |\Phi(\xi)| \leq C e^{K |\xi|^{1/\beta}},
\end{equation}
\textit{i.e.} $\Phi$ is of finite type for the order $\leq 1/\beta$ on $\mathbb{R}$. We then use the following version of the Phragm\'en-Lindel\"of's method. 
\begin{leftbar} \noindent
Let $0< \rho < 1$ and $M,K \geq 0$. Assume that $g \in \mathcal{H}(\mathbb{C}_+) \cap C^0(\mathbb{C}_+ \cup i \mathbb{R})$ is of order 1 with minimal type 0, together with
\[\forall y \in \mathbb{R},\quad |g(iy)| \leq Me^{K |y|^\rho}.
\]
Therefore, for every $z \in \mathbb{C}_+$ we have
\[
|g(z)| \leq M \exp\left( \frac{K |z|^\rho}{\cos(\pi \rho/2)}\right) .
\]
\end{leftbar}
Let us show this version of the Phragm\'en-Lindel\"of principle. We will use the following ``standard'' version of the Phragmén-Lindelöf principle, see \cite[\S III.C]{K1}.
\begin{leftbar} \noindent
Let $\alpha \geq 1/2$, $\Omega = \{ re^{i\theta} : r > 0,\quad |\theta| < \frac{\pi}{2 \alpha} \}$, and $g \in \mathcal{H}(\Omega) \cap C^0(\overline{\Omega})$ of order $\rho < \alpha$, such that  
\[|g(re^{\pm i \frac{\pi}{2 \alpha}})| \leq M.
\]
Then, for every $z \in \Omega$ there holds
\[
|g(z)| \leq M .
\]
\end{leftbar}
To show the less standard version of the Phragmén-Lindelöf's meghod we fix a function $g$ as in the thesis. For every $\kappa > 0$ we consider the function 
\[
g_\kappa(z) = \exp \left( -\frac{K}{\cos(\pi \rho/2)}z^\rho - \kappa z\right)g(z) \in \mathcal{H}(\mathbb{C}_+) \cap C^0(\mathbb{C}_+ \cup i \mathbb{R}).
\]
We apply the ``standard'' Phragmén-Lindelöf's method quoted above to the function $g_\kappa$ on the domain $\Omega := \{ x+iy : x,y > 0 \}$. Note that the quoted result is invariant by rotation, hence we may see $\Omega$ as a sector with angle $\pi/2$, and the parameter $\alpha$ is set to $2$. On $\Omega$ the function $g_\kappa$ is of order $\rho = 1$, which is strictly less than $\alpha$. Moreover, on the boundary of $\Omega$ lying on the imaginary axis there holds 
\[
|g_\kappa(iy)| =  \exp \left( -\frac{K}{\cos(\pi \rho/2)} \opRe ((iy)^\rho)\right)|g(iy)| \leq e^{-Ky^\rho}Me^{Ky^\rho} = M.
\]
On the boundary of $\Omega$ lying on the real axis we proceed as follows: the function $g$ is of order $1$ and of type $0$, hence 
\[
\forall \eta > 0,\quad \exists C_\eta >,\quad \forall z \in \mathbb{C}_+,\quad |g(z)| \leq C_\eta e^{\eta |z|}. 
\]
We take $\eta = \kappa$, so that for every $x > 0$, 
\[
|g_\kappa(x)| \leq \exp \left( -\frac{K}{\cos(\pi \rho/2)} x^\rho - \kappa x \right)C_\kappa e^{\kappa x} \leq C_\kappa.
\]
The function $|g_\kappa|$ is thus bounded above on $\Omega$ by the constant $\max(M,C_\kappa)$, and the quoted Phragmén-Lindelöf's method yields that 
\[
\forall z \in \Omega,\quad |g_\kappa(z)| \leq \max(M,C_\kappa).
\]
The same reasoning can be made on the domain $\{ x+iy : y < 0 < x \}$, so that 
\[
\forall z \in \mathbb{C}_+,\quad |g_\kappa(z)| \leq \max(M,C_\kappa).
\]
The function $|g_\kappa|$ is thus bounded $\mathbb{C}_+$, hence from the quoted Phragmén-Lindelöf's method (or rather, the extended maximum principle, see \cite[III.C]{K1}),
\[
\sup_{z \in \mathbb{C}_+} |g_\kappa(z)| \leq \sup_{z \in \partial \mathbb{C}_+} |g_\kappa(z)| \leq M.
\]
Now fix $z \in \mathbb{C}_+$, for every $\kappa > 0$ we have $|g_\kappa(z)| \leq M$ and 
\[
g_\kappa(z) \rightarrow \exp \left( -\frac{K}{\cos(\pi \rho/2)}z^\rho\right)g(z),\quad \kappa \rightarrow 0^+.
\]
Thus
\[
\exp\left( -\frac{K}{\cos(\pi \rho/2)}\opRe(z^\rho)\right)|g(z)| \leq M,
\]
which shows the claimed version of the Phragmén-Lindelöf principle. This shows \eqref{eq:growth_ell}. 
\item We apply the Cauchy inequality on a disc of radius $0 < r < \infty$ to obtain 
\[
|f_n| = \left| \frac{\Phi^{(n)}(0)}{n!} \right| \leq \frac{1}{r^n} \max_{|s| = r} |\Phi(s)| \leq \frac{1}{r^n}\|f\| C_1 \exp \left[\frac{\beta}{e\cos(\pi/2\beta)} \left( \frac{1+\delta}{B} r \right)^{1/\beta} \right],
\]
where the second inequality is due to \eqref{eq:growth_ell}. Put 
\[
\mu := \frac{\beta}{e\cos(\pi/2\beta)} \left( \frac{1+\delta}{B} \right)^{1/\beta}>0,
\]
so that the previous estimate reads
\[
|f_n| \leq \|f\| C_1 \frac{e^{\mu r^{1/\beta}}}{r^n}.
\]
When $n \geq 1$, the above right-hand side reaches its minimal value with respect to $r$ when 
\[
r = \left( \frac{n \beta}{\mu}\right)^\beta,
\]
at which 
\[
|f_n| \leq \|f\| C_1 \left( \frac{e\mu}{n\beta} \right)^{n \beta} = C_1 \|f\| \left( \frac{1+\delta}{B \cos^{\beta} \frac{\pi}{2 \beta}}\right)^n n^{-n\beta}.
\]
\end{enumerate}
\end{proof}
We note that $\Phi$ does not depend on $h$ or $\epsilon$, hence taking $\epsilon = 2$ brings 
\[
\Phi(s) = \langle f(t) , e^{ist} \rangle.
\]
We now make the final preparations to show Theorem \ref{theo:mitjagin1}. For parameters $\beta > 1$ and $B > 0$ we introduce the Hilbert space $L^2_{\beta B}$ constituted of these $\phi\in C^\infty[-1,1]$ such that 
\[
\|\phi\|_{L^2_{\beta B}}^2:= \sum_{n=0}^\infty \int_{-1}^{+1} \left|\frac{\phi^{(n)}(t)}{{ B}^n n^{n\beta}} \right|^2 dt<+\infty.
\]
It is elementary to check that for every $\epsilon > 0$ we have 
\begin{equation}\label{eq:embed_func}
C^\infty_{\beta B} \subset L^2_{\beta B} \subset C^\infty_{\beta,B+\epsilon},
\end{equation}
where the inclusions are continuous and dense\footnote{That the first inclusion is bounded is trivial. That the second inclusion is bounded can be shown using the Sobolev embeding $H^1(-1,1) \subset C[-1,1]$. That the inclusions are dense is shown by mollifying with a kernel function lying in $C^\infty_{\gamma C}$ for some $1 < \gamma < \beta$ and $C >0$.}. We will need a similar construction for our spaces of sequences. For a sequence $(w_n)$ of positive numbers we define $K(w_n)$ as the set of these sequences $a = (a_n)$ such that 
\[
\|a\|_{K(w_n)} := \sup_{n \in \mathbb{N}} \frac{|a_n|}{w_n} < \infty.
\]
This defines a Banach space. For parameters $\beta > 1$ and $B >0$ we define $\ell^2_{\beta B}$ as the set of these sequences $a$ such that 
\[
\| a\|_{\ell^2_{\beta B}} ^2:= \sum_{n=0}^\infty \left|\frac{a_n}{B^n n^{n\beta}} \right|^2 < \infty.
\]
This defines a Hilbert space, and for every $\epsilon > 0$ we have the continuous and dense inclusions 
\begin{equation}\label{eq:embed_seq}
\ell^2_{\beta B} \subset K(n^{n\beta}B^n) \subset \ell^2_{\beta,B+\epsilon}.
\end{equation}
Below is the proof of Theorem \ref{theo:mitjagin1}, the proof we give is not exactly the one given by Mitjagin. Actually, it is longer, but it will make the proof of the optimality of $\Gamma_\beta$ more natural. 
\begin{proof}[Proof of Theorem \ref{theo:mitjagin1}]
Fix $\epsilon,B > 0$ and $\beta > 1$. We consider the Borel operator 
\[
\mathcal{B}_{\infty}^\epsilon : C^\infty_{\beta, \Gamma_\beta B+ \epsilon} \rightarrow K((\Gamma_\beta B + \epsilon)^n n^{n\beta}),\quad \mathcal{B}_{\infty}^\epsilon\phi = (\phi^{(n)}(0))_{n=0}^\infty,
\]
and the inclusion 
\[
i_{\infty}^\epsilon : K(B^n n^{n\beta}) \rightarrow K((\Gamma_\beta B + \epsilon)^n n^{n\beta}).
\]
The claim is precisely that 
\[
\forall \epsilon > 0,\quad \opRange i_\infty^\epsilon \subset \opRange \mathcal{B}_\infty^\epsilon.
\]
We will show this by duality arguments, hence it will be convenient to replace the domains and codomains of $\mathcal{B}_\infty^\epsilon$ and $i_\infty^\epsilon$ by Hilbert spaces\footnote{The Douglas Lemma does not hold on arbitrary non reflexive Banach spaces, see \cite{embry}.}. Consider thus the Borel operator 
\[
\mathcal{B}_2^\epsilon : L^2_{\beta,\Gamma_\beta B + \epsilon/2} \rightarrow \ell^2_{\beta,\Gamma_\beta B + \epsilon},
\]
as well as the inclusion 
\[
i_2^\epsilon : \ell^2_{\beta B} \rightarrow \ell^2_{\beta,\Gamma_\beta B+\epsilon}.
\]
From the inclusions \eqref{eq:embed_func} and \eqref{eq:embed_seq}, the operators $\mathcal{B}_2^\epsilon$ and $i_2^\epsilon$ are well-defined and bounded. Further, we have 
\[
\opRange i_\infty^\epsilon \subset \opRange i_2^\epsilon,\quad \opRange \mathcal{B}_2^\epsilon \subset \opRange \mathcal{B}_\infty^\epsilon,
\]
hence it is sufficient for us to show that
\[
\forall \epsilon > 0,\quad 
\opRange i_2^\epsilon \subset \opRange \mathcal{B}_2^\epsilon.
\]
We now fix $\epsilon > 0$ and aim  to show that 
\[
\opRange i_2^{2\epsilon} \subset \opRange \mathcal{B}_2^{2\epsilon},
\]
which is enough to conclude. For clarity we put $\mathcal{B} := \mathcal{B}_2^{2\epsilon}$ and $i := i_2^{2\epsilon}$. We identify $Y:= \ell^2_{\beta,\Gamma_\beta B + 2\epsilon}$ to its dual space, do the same for $\ell^2_{\beta B}$, and consider the true dual of $X:= L^2_{\beta,\Gamma_\beta B + \epsilon}$ for the duality arguments. This means that 
\[
\mathcal{B} : X \rightarrow Y,\quad i : \ell^2_{\beta B} \rightarrow Y,\quad \mathcal{B}^* : Y \rightarrow X',\quad i^* : Y \rightarrow \ell^2_{\beta B}.
\]
From Douglas's lemma \cite{douglas}, the range inclusion
\[
 \opRange i \subset \opRange \mathcal{B},   
\]
is equivalent to
\begin{equation}\label{eq:incl_adj}
\exists c > 0,\quad \forall a \in Y,\quad \| \mathcal{B}^* a \|_{X'} \geq c \| i^*a \|_{\ell^2_{\beta B}},    
\end{equation}
which is what we aim at showing until the end of the proof. From the density of the set of finitely supported sequences in $Y$, which we denote by $\mathbb{C}^{(\mathbb{N})}$, it is enough to show \eqref{eq:incl_adj} when $a \in \mathbb{C}^{(\mathbb{N})}$. For such $a$ consider the concentrated functional 
\[
f:= \sum_{n=0}^\infty a_n \delta^{(n)},\quad \langle \delta^{(n)} , \phi \rangle := \phi^{(n)}(0).
\]
Clearly, $f \in X'$, and we observe that for any $\phi \in X$
\begin{align*}
\langle f , \phi \rangle_{X',X} &= \sum_{n=0}^\infty a_n \phi^{(n)}(0) \\
&= \sum_{n=0}^\infty \frac{[(\Gamma_\beta B + 2\epsilon)^nn^{n\beta}]^2 
a_n \cdot (\mathcal{B}\phi)_n}{[(\Gamma_\beta B + 2\epsilon)^nn^{n\beta}]^2} \\
&= \langle \tilde{a} , \mathcal{B}\phi \rangle_{Y} \\
&= \langle \mathcal{B}^* \tilde{a} , \phi \rangle_{X',X},
\end{align*}
where 
\[
\tilde{a}_n:= [(\Gamma_\beta B + 2\epsilon)^nn^{n\beta}]^2   a_n.
\]
In particular, $f = \mathcal{B}^* \tilde{a}$. From 
\[
\forall b \in Y,\quad \forall n \in \mathbb{N},\quad (i^*b)_n = \left( \frac{B}{\Gamma_\beta B+2\epsilon} \right)^{2n} b_n,
\]
we see that \eqref{eq:incl_adj} is equivalent to 
\[
\sum_{n=0}^\infty  |B^n n^{n\beta}a_n|^2 \leq c \|f\|_{X'}^2,
\]
with a constant $c > 0$ not depending on $a$. We then invoke Lemma \ref{lem:mitjagin} for the concentrated functional $f$. To this end, observe that 
\[
C^\infty_{\beta,\Gamma_\beta B + \epsilon/2} \subset L^2_{\beta,\Gamma_\beta B+\epsilon},
\]
with continuous dense inclusion. Hence, by transposition, we obtain
\[
f \in X' \hookrightarrow (C^\infty_{\beta,\Gamma_\beta B + \epsilon/2})',
\]
so that Lemma \ref{lem:mitjagin} applies to $f$. Plain computations show that the function $\Phi$ constructed at Lemma \ref{lem:mitjagin} satifies
\[
\Phi(s) = \sum_{n=0}^\infty a_n  (is)^n,
\]
so that the estimate \eqref{eq:growth_coef} brings
\[
|a_n| \leq C \|f\|_{X'} \left( \frac{1+\delta}{\Gamma_\beta B+ \epsilon/2}\Gamma_\beta\right)^n n^{-n\beta},
\]
for a constant $C = C(\delta,\beta,B,\epsilon)$, where $\delta > 0$ is to be tuned later on. We deduce
\[
|B^n n^{n\beta}a_n| \leq C \|f\|_{X'} \left( \frac{1+\delta}{1+\frac{\epsilon}{2\Gamma_\beta B}} \right)^n,
\]
and observe that as $\beta,B,\epsilon$ are fixed, we can take $\delta > 0$ small enough so that the right-hand side is summable. 
\end{proof}
\begin{Remark}
Douglas's lemma yields a slightly better conclusion than the inclusion \eqref{eq:incl_adj}: there exists a bounded operator $\mathfrak{R} : \ell^2_{\beta B} \rightarrow L^2_{\beta,\Gamma_\beta B + \epsilon}$ such that $i = \mathcal{B} \mathfrak{R}$. In other words, the interpolation problem in the Gevrey class of order $s$ can be solved continuously and linearly. Compare with \cite[Proposition 21, \S 7, p. 119]{mitjagin_bases}.
\end{Remark}
To establish the optimality of the factor $\Gamma_\beta$ we collect some results on concentrated functionals. Our starting point is the following observation: let $f$ be a concentrated functional on $C^\infty_{\beta B}$ for some $B > 0$. It is tempting to write 
\begin{equation}\label{eq:repr_dirac}
    f = \sum_{n=0}^\infty f_n \delta^{(n)},
\end{equation}
where $(f_n)$ are the coefficients of the entire function $\Phi$ built in Lemma \ref{lem:mitjagin}. However, for a sequence $(a_n)$ of complex numbers and $\phi \in C^\infty_{\beta B}$, we are only able to make the series $\sum a_n \phi^{(n)}(0)$ convergent under the assumption
\[
\sum_{n=0}^\infty |a_n| B^n n^{n\beta} < \infty.
\] 
So from \eqref{eq:growth_coef}, we are only able to make $\sum f_n \delta^{(n)}$ converge in  $(C^\infty_{\beta A})'$, for $A < B/\Gamma_\beta$. This suggests that \eqref{eq:repr_dirac} is not the optimal representation of $f$, and also note that we have not proven yet that this is a valid representation. 
\begin{Proposition}\label{prop:concentrated_func} Fix $B > 0$ and $\beta > 1$. We introduce a function $h$ in  $C^\infty_{\gamma A}$ for some $1 < \gamma < \beta$ and $A > 0$, that is constant to 1 around 0 and flat at $\pm 1$. We consider the weight 
\[
\omega(\xi) := \exp\left( \frac{\beta}{e B^{1/\beta}} |\xi|^{1/\beta}\right),
\]
and a function $\Psi \in L^2(\mathbb{R}, \omega^{-2}(\xi) d\xi)$, that is moreover entire of order $\rho < 1$ on the complex plane. Then we have
\begin{enumerate}
\item The relation
\begin{equation}\label{eq:def_concentrated_fourier}
\langle g , \phi \rangle:= \int_\mathbb{R} \Psi(\xi) \mathcal{F}[h \phi](\xi)d\xi
\end{equation}
defines a concentrated functional $g$ on $L^2_{\beta B}$, which does not depend on $h$. Moreover, we have the estimate
\begin{equation}\label{eq:fourier_isom}
    c \| g \|_{(L^2_{\beta B})'} \leq \| \Psi \|_{L^2(\mathbb{R}, \omega^{-2}(\xi) d\xi)} \leq C \| g \|_{(L^2_{\beta B})'},
\end{equation}
where the constants $c,C$ do not depend on $\Psi$. 
\item Let $f$ be a concentrated functional on $L^2_{\beta B}$. The function $\Phi$ constructed in Lemma \ref{lem:mitjagin} lies in $L^2(\mathbb{R}, \omega^{-2}(\xi) d\xi)$. Moreover, the functional $g$ defined by \eqref{eq:def_concentrated_fourier} with $\Psi = \Phi$ equals $f$, and for every $A < B/\Gamma_\beta$ there holds 
\[
f = \sum_{n=0}^\infty f_n (-i)^n\delta^{(n)}\quad \mbox{on}\quad C^\infty_{\beta A}.
\]
\end{enumerate} 
\end{Proposition}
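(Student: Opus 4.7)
The proof strategy is to reduce both parts of the proposition to the Plancherel-type isomorphism of Theorem~\ref{theo:plancherel_gevrey}. A preliminary identification: $L^2_{\beta B}(-1,1)$, restricted to functions compactly supported in $(-1,1)$, matches the Gevrey-Hilbert space $\mathcal{G}_{\beta, R, \gamma_0}(\mathbb{R})$ for $R = \beta/(eB^{1/\beta})$ and $\gamma_0 = 1/(4\beta)$, since Stirling's formula yields $B^n n^{n\beta} \asymp M_n$ with $M_n$ as in \eqref{eq:def_G_sRgamma_Mn}. Because $h\in C^\infty_{\gamma A}$ has Gevrey order $\gamma<\beta$, multiplication by $h$ is continuous from $L^2_{\beta B}(-1,1)$ into the compactly supported elements of $\mathcal{G}_{\beta, R, \gamma_0}(\mathbb{R})$ (standard multiplier estimate, as in \cite[Lemma 3.7]{reachable_martin}). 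Applying Theorem~\ref{theo:plancherel_gevrey} then places $\mathcal{F}[h\phi]$ in $L^2((1+|\xi|)^{2\gamma_0}\omega^2\,d\xi)$; Cauchy-Schwarz gives the upper bound in \eqref{eq:fourier_isom}, the polynomial factor $(1+|\xi|)^{\gamma_0}$ being absorbed into a slightly larger exponential at the cost of an $\epsilon$-loss in the radius $B$.

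Independence of $g$ from the cutoff $h$ and the concentration property both follow from Lemma~\ref{lem:spectrum}. If $h_1,h_2$ are two admissible cutoffs, the difference $(h_1-h_2)\phi$ vanishes in a neighborhood of $0$, so the Fourier inverse of $\mathcal{F}[(h_1-h_2)\phi]$ (namely $(h_1-h_2)\phi$ itself, up to reflection) has support away from $0$. Since $\Psi$ is entire of order $<1$ and $\Psi\mathcal{F}[(h_1-h_2)\phi]\in L^1$ by Cauchy-Schwarz, Lemma~\ref{lem:spectrum} gives $\opsupp \mathcal{F}[\Psi \mathcal{F}[(h_1-h_2)\phi]] \subset \opsupp \mathcal{F}[\mathcal{F}[(h_1-h_2)\phi]]$, which still avoids $0$; the Fourier transform of an $L^1$-function being continuous, it vanishes at $0$, which is precisely $\int\Psi\mathcal{F}[(h_1-h_2)\phi]\,d\xi=0$. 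Concentration is then obtained by choosing $h$ with support inside the region where $\phi$ vanishes, so $h\phi\equiv 0$. The matching lower bound $\|\Psi\|_{L^2(\omega^{-2})}\lesssim \|g\|_{(L^2_{\beta B})'}$ proceeds by duality in $L^2(\omega^{\pm2})$: given $\Psi$, a near-extremizer $\Theta\in L^2(\omega^2)$ of unit norm is approximated in $L^2(\omega^2)$ by $\mathcal{F}[h\phi]$ for $\phi\in L^2_{\beta B}(-1,1)$ with controlled norm, using Theorem~\ref{theo:plancherel_gevrey} combined with density of compactly supported Gevrey-$\beta$ functions in $\mathcal{G}_{\beta,R,\gamma_0}(\mathbb{R})$ (obtained by mollification with a Gevrey bump of smaller order).

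For part (2), the main task is to show $\Phi \in L^2(\omega^{-2})$ when $f$ is concentrated on the $L^2$-type space $L^2_{\beta B}$ (as opposed to the $C^\infty$-type space of Lemma~\ref{lem:mitjagin}). Rather than using the pointwise estimate \eqref{eq:growth_ell} (which introduces an extraneous $\Gamma_\beta$ via Phragmén-Lindelöf against $L^\infty$-norms), an $L^2$-analogue of \eqref{eq:growth_phi} is established by estimating $\|\phi_s\|_{L^2_{\beta B}}$ directly through the Leibniz formula, with the optimization over $n$ giving the correct exponent; the resulting bound on $|\Phi(s)|$ on $i\mathbb{R}$, combined with Phragmén-Lindelöf, yields $\Phi\in L^2(\omega^{-2})$. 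To identify $f=g$ (with $\Psi=\Phi$), one tests both functionals on the family $\phi_s(t)=e^{ist}h(t/\epsilon)$: $\langle f,\phi_s\rangle=\Phi(s)$ by construction of $\Phi$, while $\langle g,\phi_s\rangle$ is computed via the shift property of the Fourier transform and a Fourier-inversion-type argument (justified since $hh(\cdot/\epsilon)$ is $L^2$ with compact support), also yielding $\Phi(s)$ up to the chosen normalization. Density of $\{\phi_s\}_{s\in\mathbb{C}}$ in $L^2_{\beta B}$, obtained from Fourier inversion applied after localization by $h$, closes the identification. Finally, for the series representation, the Cauchy estimate \eqref{eq:growth_coef} gives absolute convergence of $\sum f_n(-i)^n\delta^{(n)}$ in $(C^\infty_{\beta A})'$ whenever $A<B/\Gamma_\beta$ (choose $\delta$ small enough that $(1+\delta)A\Gamma_\beta/B<1$), and the limit agrees with $f$ on the dense family $\{\phi_s\}$ via the Taylor expansion of $\Phi$. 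The main obstacle is obtaining the sharp $L^2(\omega^{-2})$ control on $\Phi$, which requires a Phragmén-Lindelöf refinement adapted to the $L^2$-framework, rather than the $L^\infty$-framework of Lemma~\ref{lem:mitjagin}.
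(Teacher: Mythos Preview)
Your approach to item 1 via Lemma~\ref{lem:spectrum} for the independence from $h$ is an interesting shortcut compared to the paper, which instead establishes the Dirac-series representation first (on $C^\infty_{\beta A}$, by expanding $\Psi$ as a power series and justifying Fubini) and deduces concentration from it via mollification. However, Lemma~\ref{lem:spectrum} as stated requires $\Psi\,\mathcal{F}[(h_1-h_2)\phi]\in L^2(\mathbb{R})$, whereas Cauchy--Schwarz with $\Psi\in L^2(\omega^{-2})$ and $\mathcal{F}[(h_1-h_2)\phi]\in L^2(\omega^2)$ only gives $L^1$; the gap is fixable by rerunning the proof of Lemma~\ref{lem:spectrum} in $L^1$, but as written the appeal is not justified. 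Your lower-bound argument for \eqref{eq:fourier_isom} also tacitly assumes that $\{\mathcal{F}[h\phi]:\phi\in L^2_{\beta B}\}$ is dense in $L^2(\omega^2\,d\xi)$, which is not obvious (these are all entire functions of exponential type at most $1$). The paper sidesteps this by showing that the auxiliary functional $\tilde g(\phi)=\int\Psi\,\mathcal{F}\phi\,d\xi$ on the full-line Gevrey space $\mathcal{G}=\mathcal{G}_{\beta,\hat B,0}$ is itself concentrated, so the supremum over $\phi$ supported in $(-1,1)$ equals the supremum over all of $\mathcal{G}$, and then Theorem~\ref{theo:plancherel_gevrey} closes.

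The substantive gap is in item 2. You claim that an ``$L^2$-analogue of \eqref{eq:growth_phi}'' for $\|\phi_\xi\|_{L^2_{\beta B}}$ avoids the $(1+\delta)$ loss and yields $\Phi\in L^2(\omega^{-2}\,d\xi)$ directly. This does not work: the $(1+\delta)$ comes from the derivatives of the cutoff $h$ in the Leibniz formula (the term $An^\gamma/\epsilon$ in the proof of Lemma~\ref{lem:mitjagin}) and persists in any $L^2$ version of the estimate. At best one obtains $\Phi\in L^2(\tilde\omega^{-2}\,d\xi)$ for every $\tilde B<B$, and layering Phragm\'en--Lindel\"of on top only worsens things by a further factor $\Gamma_\beta$. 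The paper's argument is a bootstrap: the weak membership $\Phi\in L^2(\tilde\omega^{-2})$ defines, via item~1, a concentrated functional $g$ on $L^2_{\beta\tilde B}$; a direct Fourier-inversion computation, $\langle g,\phi\rangle=\int\langle f(t),e^{i\xi t}\rangle\,\mathcal{F}[h\phi](\xi)\,d\xi=\langle f,h\phi\rangle=\langle f,\phi\rangle$, shows $g=f$ on $C^\infty_{\beta\tilde B}$; since $f$ is by hypothesis continuous on the \emph{larger} space $L^2_{\beta B}$, the second inequality in \eqref{eq:fourier_isom}, now applied with the correct weight $\omega$, upgrades $\Phi$ to $L^2(\omega^{-2})$. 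You are missing this feedback from item~1 into item~2, which is the crux of the argument.
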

\begin{proof}
\underline{Step 1:} We show that \eqref{eq:def_concentrated_fourier} defines a functional on $L^2_{\beta B}$. For a function $\phi \in C^\infty_{\beta B}$, the function $\varphi:= h \phi$ lies again in $L^2_{\beta B}$ and satisfies 
\[
\| \varphi \|_{L^2_{\beta B}} \lesssim \| \phi \|_{L^2_{\beta B}},
\]
see \cite[Lemma 3.7]{reachable_martin}. Moreover, since $h$ is compactly supported, the function $\varphi$ can be extended to the whole real line, satisfying the same Gevrey estimate. More precisely, we have 
\[
\| \varphi \|_{L^2_{\beta B}} = \| \varphi \|_{\mathcal{G}_{\beta, \hat{B},0}},\quad \hat{B} := \frac{\beta}{e B^{1/\beta}}.
\]
We deduce that 
\[
\| \varphi \|_{\mathcal{G}_{\beta, \hat{B} ,0}} = \| \varphi \|_{L^2_{\beta B}} \lesssim \| \phi \|_{L^2_{\beta B}},
\]
and from Theorem \ref{theo:plancherel_gevrey} we find that 
\[
\|\mathcal{F} \varphi \|_{L^2(\mathbb{R}, \omega^{2}(\xi) d\xi)} = \|\mathcal{F} \varphi \|_{\hat{\mathcal{G}}_{\beta, \hat{B} ,0}} \lesssim \| \varphi \|_{\mathcal{G}_{\beta, \hat{B} ,0}} \lesssim \| \phi \|_{L^2_{\beta B}}.
\]
From the Cauchy-Schwarz inequality, we deduce that the integral \eqref{eq:def_concentrated_fourier} thus converges, that \eqref{eq:def_concentrated_fourier} defines a functional on $L^2_{\beta B}$, and the first estimate in \eqref{eq:fourier_isom} follows. \newline
\newline
\underline{Step 2:} From now on we assume that the order $\rho$ of $\Psi$ satisfies $\rho < 1/\gamma$ (it will be proved in Step $4$ that we have indeed $\rho\leqslant 1/\beta$). For the moment, we can assume that $\rho < 1/\gamma$ by choosing such $\gamma$ and fixing $h$ as in the statement of the Proposition. We show that 
\begin{equation}\label{eq:Diracs_g}
g = \sum_{n=0}^\infty a_n(-i)^n \delta^{(n)},\quad \mbox{on} \quad C^\infty_{\beta A}  
\end{equation}
holds for every $A < B/\Gamma_\beta$, where $(a_n)$ are the Taylor coefficients of $\Psi$. To this end, we fix $A < B/\Gamma_\beta$, and observe that both functionals involved in \eqref{eq:Diracs_g} are continuous over $C^\infty_{\beta A}$ where the set $C_\gamma^\infty$ is dense. Hence we only have to show the equality for test functions in $C_\gamma^\infty$. Let $\phi \in C^\infty_{\gamma D}$ for some $D > 0$ and compute 
\begin{align*}
\langle g , \phi \rangle &= \frac{1}{\sqrt{2 \pi}}\int_\mathbb{R} \Psi(\xi)\mathcal{F}[\phi h](\xi) d\xi \\
&= \frac{1}{\sqrt{2 \pi}}\int_\mathbb{R} \sum_{n=0}^\infty a_n \xi^n \mathcal{F}[\phi h](\xi) d\xi \\
&= \sum_{n=0}^\infty a_n \frac{1}{\sqrt{2 \pi}}\int_\mathbb{R}\xi^n \mathcal{F}[\phi h](\xi) d\xi \\
&= \sum_{n=0}^\infty a_n (-i)^n(\phi h)^{(n)}(0) \\
&= \sum_{n=0}^\infty a_n (-i)^n\phi^{(n)}(0),
\end{align*}
where at the third equality, the interversion of the series and the integral has to be justified. For $n \in \mathbb{N}$ we compute, with $\varphi:= h \phi$,
\begin{align*}
\int_\mathbb{R} |\xi^n \mathcal{F}\varphi(\xi)| d\xi &= \| \mathcal{F}\varphi^{(n)} \|_{L^1(\mathbb{R})} \\
&\lesssim \| \varphi^{(n)} \|_{H^1(\mathbb{R})} \\
&\lesssim \| \varphi^{(n)} \|_{C^1[-1,1]} \\
&\lesssim D^n n^{n\gamma} + D^{n+1} (n+1)^{(n+1)\gamma}.
\end{align*}
Because $\Psi$ has order $< 1/\gamma$, for any fixed $\lambda>\gamma$ we have
\[
\sum_{n=0}^\infty |a_n| n^{n \lambda} < \infty,
\]
see \cite[Theorem 2.2.2]{boas}. Hence 
\[
\sum_{n=0}^\infty |a_n| \int_\mathbb{R}|\xi^n \mathcal{F}[\phi h](\xi)| d\xi \lesssim \sum_{n=0}^\infty |a_n|\left[D^n n^{n\gamma} + D^{n+1} (n+1)^{(n+1)\gamma} \right] \lesssim \sum_{n=0}^\infty |a_n| n^{n \lambda} < \infty.
\]
This shows the equality \eqref{eq:Diracs_g}. \newline
\newline
\underline{Step 3:} Let us show that $g$ is a concentrated functional on $L^2_{\beta B}$. Take $\phi \in L^2_{\beta B}$ which vanishes identically on $[-\delta,\delta]$ for some $\delta > 0$. We introduce a mollifying kernel $(\varrho_\epsilon)_{\epsilon > 0}$ that is Gevrey of order $\gamma$ and put $\psi_\epsilon := (h\phi)*\varrho_\epsilon$. Plain computations show that $\phi_\epsilon \in C^\infty_{\gamma}$ together with $\psi_\epsilon \rightarrow h\phi$ in $L^2_{\beta B}$, so that from Theorem \ref{theo:plancherel_gevrey} we also have $\mathcal{F}\psi_\epsilon \rightarrow \mathcal{F}(h\phi)$ in $L^2(\mathbb{R}, \omega^{2}(\xi) d\xi)$. We observe that 
\[
\opsupp \phi_\epsilon \subset [-1,-\delta + \epsilon] \cup [\delta - \epsilon , 1],
\]
which is distant to $0$ for small enough $\epsilon$. For such $\epsilon$, a computation identical to that made at Step 2 brings 
\[
\frac{1}{\sqrt{2 \pi}}\int_\mathbb{R} \Psi(\xi)\mathcal{F}\psi_\epsilon(\xi) d\xi = \sum_{n=0}^\infty a_n (-i)^n\psi_\epsilon^{(n)}(0) = 0,
\]
and passing to the limit $\epsilon \rightarrow 0$ (which is valid since $\mathcal{F}\psi_\epsilon \rightarrow \mathcal{F}(h\phi)$ in $L^2(\mathbb{R}, \omega^{2}(\xi) d\xi)$ and $\Psi \in L^2(\mathbb{R}, \omega^{-2}(\xi) d\xi)$),
we find that 
\[
0 = \frac{1}{\sqrt{2 \pi}}\int_\mathbb{R} \Psi(\xi)\mathcal{F}[h\phi](\xi) d\xi = \langle g , \phi \rangle,
\]
which shows that $g$ is concentrated. 

In particular, owing to the inclusion $C^\infty_{\beta B} \subset L^2_{\beta B}$, the functional $g$ is a concentrated functional on $C^\infty_{\beta B}$ and we are allowed to consider the entire function $\Phi$ constructed in Lemma \ref{lem:mitjagin}. Let us then show that $\Phi = \Psi$. We observe that for fixed $s \in \mathbb{C}$, the function $t \mapsto e^{ist}$ is analytic on the real line, hence we may test \eqref{eq:Diracs_g} against it. This yields 
\[
\Phi(s) = \langle g(t) , e^{ist} \rangle = \sum_{n=0}^\infty a_n s^n = \Psi(s). 
\]
\underline{Step 4:} The function $\Psi = \Phi$ therefore satisfies the estimates of Lemma \ref{lem:mitjagin}, and in particular it has order $\rho \leq 1/\beta$. This justifies that we may fix $1 < \gamma < \beta$, and therefore $h$, independently of $\Psi$. This also shows that $g$ does not depend on $h$. \newline
\newline
\underline{Step 5:} Let us show the second estimate in \eqref{eq:fourier_isom}. We compute 
\[
\|g\|_{(L^2_{\beta B})'} = \sup_{\| \phi \|_{L^2_{\beta B}} = 1} | \langle g , \phi \rangle | = \sup_{\| \phi \|_{L^2_{\beta B}} = 1} \frac{1}{\sqrt{2 \pi}} \left| \int_\mathbb{R} \Psi(\xi) \mathcal{F}[h \phi](\xi)d\xi \right|,
\]
and acknowledge the following fact, which has already been used in this proof: if $\psi \in \mathcal{G}_{\beta, \frac{\beta}{e B^{1/\beta}},0}$, then its restriction to $[-1,1]$ lies in $L^2_{\beta B}$. To ease the notation we use the shorthand $\mathcal{G}$ for the previous space. Let us make the following computations that we will justify later on
\begin{align*}
    \sup_{\| \phi \|_{L^2_{\beta B}} = 1} \left| \int_\mathbb{R} \Psi(\xi) \mathcal{F}[h \phi](\xi)d\xi \right| &\geq  \sup_{\phi \in \mathcal{G} : \| \phi \|_{L^2_{\beta B}} = 1} \left| \int_\mathbb{R} \Psi(\xi) \mathcal{F}[h \phi](\xi)d\xi \right| \\
    &= \sup_{\phi \in \mathcal{G} : \| \phi \|_{L^2_{\beta B}} = 1} \left| \int_\mathbb{R} \Psi(\xi) \mathcal{F}\phi(\xi)d\xi \right| \\
    &= \sup_{\underset{\phi \not \equiv 0 ~on~ [-1,1]}{\phi \in \mathcal{G}}} \frac{1}{\|\phi\|_{L^2_{\beta B}}} \left| \int_\mathbb{R} \Psi(\xi) \mathcal{F}\phi(\xi)d\xi \right| \\
    &\geq \sup_{\underset{\phi \not \equiv 0 ~on~ [-1,1]}{\phi \in \mathcal{G}}} \frac{1}{\|\phi\|_\mathcal{G}} \left| \int_\mathbb{R} \Psi(\xi) \mathcal{F}\phi(\xi)d\xi \right| \\
    &\geq \sup_{\underset{\phi \not \equiv 0 ~on~ \mathbb{R}}{\phi \in \mathcal{G}}} \frac{1}{\|\phi\|_\mathcal{G}} \left| \int_\mathbb{R} \Psi(\xi) \mathcal{F}\phi(\xi)d\xi \right| \\
    &\gtrsim \| \Psi \|_{L^2(\mathbb{R}, \omega^{-2}(\xi) d\xi)}.
\end{align*}
The first equality is due to the fact that the functional 
\[
\tilde{g} : \phi \mapsto \int_\mathbb{R} \Psi(\xi) \mathcal{F}\phi(\xi)d\xi,
\]
is a concentrated functional on $\mathcal{G}$, which can be shown reasoning similarly as in Step 3. The second inequality is due to $\| \phi \|_{L^2_{\beta B}} \leq \| \phi \|_\mathcal{G}$. The third inequality is because $\tilde{g}$ is concentrated on $\mathcal{G}$. The last inequality is due to the Plancherel-Gevrey Theorem \ref{theo:plancherel_gevrey}, which asserts that the Fourier transform is a topological isomorphism $\mathcal{G} \rightarrow L^2(\mathbb{R}, \omega^{-2}(\xi) d\xi)$. This shows the second estimate in \eqref{eq:fourier_isom}. \newline
\newline
\underline{Step 6:} To end the proof of the Proposition, it remains to show its second item. For, let $f$ be a concentrated functional on $L^2_{\beta B}$, going back to the proof of Lemma \ref{lem:mitjagin} we see that $\Phi$ satisfies the following estimate on the real line 
\begin{equation}\label{eq:estimate_PHI_real}
    \forall \delta > 0,\quad \exists C_\delta > 0,\quad \forall \xi \in \mathbb{R},\quad |\Phi(\xi)| \leq C_\delta \exp\left( \frac{\beta}{e} \left( \frac{1+\delta}{B}|\xi|\right)^{1/\beta}\right).
\end{equation}
Thus, $\Phi$ is nearly in the weighted Lebesgue space $L^2(\mathbb{R}, \omega^{-2}(\xi) d\xi)$. In fact, for fixed $0 < \tilde{B} < B$, consider the weight
\[
\tilde{\omega}(\xi) := \exp\left( \frac{\beta}{e \tilde{B}^{1/\beta}} |\xi|^{1/\beta}\right).
\]
From \eqref{eq:estimate_PHI_real} we deduce that $\Phi \in L^2(\mathbb{R}, \tilde{\omega}^{-2}(\xi) d\xi)$, hence $\tilde{g}$ defined by the relation \eqref{eq:def_concentrated_fourier} with $\Psi = \Phi$ defines a concentrated functional on $L^2_{\beta \tilde{B}}$. Clearly, the functional $\tilde{g}$ does not depend on $\tilde{B}$, hence we denote it by $g$. For $\phi \in C^\infty_{\beta \tilde{B}}$ we compute
\begin{align*}
\langle g , \phi \rangle &= \frac{1}{\sqrt{2\pi}} \int_\mathbb{R} \Phi(\xi) \mathcal{F}[\phi h](\xi) d\xi \\
&= \frac{1}{\sqrt{2\pi}}\int_\mathbb{R} \langle f(t) , e^{i\xi t} \rangle \mathcal{F}[h\phi](\xi)d\xi \\
&= \left\langle f(t),\frac{1}{\sqrt{2\pi}}\int_\mathbb{R} e^{i\xi t} \mathcal{F}[h\phi](\xi)d\xi \right\rangle \\
&= \langle f , \phi h \rangle = \langle f , \phi \rangle.
\end{align*}
This shows that $g = f$ on $C^\infty_{\beta \tilde{B}}$. We deduce that $g$ is a concentrated functional on $L^2_{\beta B}$, so that $\Phi$ lies indeed in $L^2(\mathbb{R}, \omega^{-2}(\xi) d\xi)$. The representation of $f$ as a series of Diracs follows from Step 2, which ends the proof. 
\end{proof}
We are now in position to show the optimality of the factor $\Gamma_\beta$.
\begin{Theorem}
For every $\epsilon,B > 0$ and $\beta > 1$, there exists a sequence $(a_n) \in K(n^{n\beta} B^n)$ which can be interpolated by no $\phi \in C^\infty_{\beta,\Gamma_\beta B - \epsilon}$. 
\end{Theorem}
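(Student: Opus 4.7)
The strategy is a proof by contradiction that reverses the direction of the Douglas-lemma argument used in Theorem~\ref{theo:mitjagin1}. I would first assume that every $(a_n) \in K(n^{n\beta} B^n)$ admits an interpolant $\phi \in C^\infty_{\beta, \Gamma_\beta B - \epsilon}$. Transferring this hypothesis to the Hilbert setting via the embeddings \eqref{eq:embed_func} and \eqref{eq:embed_seq}, with $L := \Gamma_\beta B - \epsilon/2$ (so $L < \Gamma_\beta B$), it yields the range inclusion $\opRange(i) \subset \opRange(\mathcal{B})$ for the Borel operator $\mathcal{B} : L^2_{\beta L} \to Y$ and the inclusion $i : \ell^2_{\beta B} \to Y$ into a common $\ell^2$ target $Y$. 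Douglas's lemma then furnishes a constant $C_0 > 0$ such that $\|\mathcal{B}^* y\|_{(L^2_{\beta L})'} \geq C_0 \, \|i^* y\|_{\ell^2_{\beta B}}$ for every $y \in Y$.

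Next I would translate both sides through Proposition~\ref{prop:concentrated_func}. For finitely supported $y$, the functional $\mathcal{B}^* y$ is concentrated on $L^2_{\beta L}$ and, via part~1 of that Proposition, corresponds to a polynomial $\Phi$ whose Taylor coefficients $(f_n)$ are linearly related to $(y_n)$ by an explicit weight. A direct computation from the definitions of the adjoints gives
\[
\|\mathcal{B}^* y\|^2_{(L^2_{\beta L})'} \asymp \|\Phi\|^2_{L^2(\mathbb{R}, \omega_L^{-2})}, \qquad \|i^* y\|^2_{\ell^2_{\beta B}} = \sum_{n \geq 0} |f_n|^2 B^{2n} n^{2n\beta},
\]
in the notation of Proposition~\ref{prop:concentrated_func} with $\omega_L(\xi) = \exp\bigl((\beta/(eL^{1/\beta}))\,|\xi|^{1/\beta}\bigr)$. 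The Douglas inequality thereby becomes the claim: for every polynomial $\Phi$,
\[
\|\Phi\|^2_{L^2(\mathbb{R}, \omega_L^{-2})} \gtrsim \sum_{n \geq 0} |f_n|^2 B^{2n} n^{2n\beta}.
\]

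To violate this, I would invoke the sharpness of the Phragm\'en--Lindel\"of constant $1/\cos(\pi/(2\beta)) = \Gamma_\beta^{1/\beta}$, which is saturated by Mittag-Leffler-type functions. A natural candidate is
\[
\Psi_c(s) := E_\beta(i c s) + E_\beta(-i c s) = 2 \sum_{m \geq 0} \frac{(-1)^m (cs)^{2m}}{\Gamma(2\beta m + 1)},
\]
an entire function of order $1/\beta$, with real-axis growth $|\Psi_c(\xi)| \asymp \exp\bigl(c^{1/\beta}\cos(\pi/(2\beta))\,|\xi|^{1/\beta}\bigr)$ and Taylor coefficients $|f_{2m}| \asymp c^{2m}(e/\beta)^{2\beta m}(2m)^{-2\beta m}/\sqrt{m}$ by Stirling. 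Picking $c$ in the interval $\bigl((\beta/e)^\beta/B,\ (\beta/e)^\beta \Gamma_\beta/L\bigr)$, which is nonempty precisely because $L < \Gamma_\beta B$, the function $\Psi_c$ lies in $L^2(\mathbb{R}, \omega_L^{-2})$ while $\sum_n |f_n|^2 B^{2n} n^{2n\beta} = \infty$, the partial sums diverging geometrically.

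The principal obstacle I expect is converting $\Psi_c$ into genuine polynomials violating the inequality. The naive Taylor truncation $\Phi^{(N)}(\xi) = \sum_{n \leq N} f_n \xi^n$ does not suffice, because on the range $|\xi| \sim (N\beta)^\beta$ the monomial tail $|\xi|^N$ overshoots the exponential weight $\omega_L^{-2}$, causing $\|\Phi^{(N)}\|_{L^2(\mathbb{R}, \omega_L^{-2})}$ to blow up as $N \to \infty$. The remedy is a Ces\`aro-type or smoothed-frequency truncation of $\Psi_c$ that yields polynomials $\Phi^{(N)}$ with $\sup_N \|\Phi^{(N)}\|_{L^2(\mathbb{R}, \omega_L^{-2})} < \infty$ while $\sum_{n \leq N} |f_n^{(N)}|^2 B^{2n} n^{2n\beta} \to \infty$; inserting such $\Phi^{(N)}$ into the Douglas inequality delivers the contradiction, after which taking for $(a_n)$ the Taylor-coefficient sequence of (an appropriate cut-off rescaling of) the concentrated functional's conjugate yields the announced non-interpolable sequence.
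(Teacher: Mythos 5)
Your reduction coincides with the paper's own first half and is correct: the contradiction hypothesis, the transfer to Hilbert spaces $L^2_{\beta,\Gamma_\beta B-\epsilon/2}$ and $\ell^2$, Douglas's lemma, and the identification (through Proposition \ref{prop:concentrated_func} and the computation of the adjoints) of the Douglas inequality with the statement that $\sum_n |f_n|^2 B^{2n} n^{2n\beta} \lesssim \|P\|^2_{L^2(\mathbb{R},\omega_L^{-2})}$ for every polynomial $P=\sum f_n s^n$ are exactly the paper's steps, and your Mittag--Leffler candidate encodes the same Phragm\'en--Lindel\"of sharpness the paper uses. The gap is at the decisive step, which you yourself flag as ``the principal obstacle'' and then only name a remedy for. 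The Douglas inequality is available only for sequences in $Y=\ell^2_{\beta,\Gamma_\beta B}$, and the coefficient sequence attached to $\Psi_c$ is \emph{not} in $Y$ once $c>(\beta/e)^\beta/B$ (the weighted sum with $\Gamma_\beta B$ in place of $B$ diverges even faster), so one really must exhibit genuine polynomials $\Phi^{(N)}$ with $\sup_N\|\Phi^{(N)}\|_{L^2(\mathbb{R},\omega_L^{-2})}<\infty$ and divergent weighted coefficient sums. A ``Ces\`aro-type or smoothed-frequency truncation'' is not constructed and its uniform boundedness against a stretched-exponential weight on the real line is not a routine fact (Ces\`aro summability bounds are a disc/torus phenomenon and do not transfer automatically to polynomial sections of an entire function of order $1/\beta$ in $L^2(\mathbb{R},\omega_L^{-2})$). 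Since, by Douglas and density of finitely supported sequences, the existence of such polynomials is equivalent to the failure of the range inclusion, this unproved step is the entire content of the theorem.

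For comparison, the paper resolves precisely this point by running the approximation in the opposite direction: for an arbitrary concentrated functional it invokes the polynomial approximation theorem \cite[Theorem 3.1]{Borichev} in $L^2(\mathbb{R},\omega^{-2}(\xi)d\xi)$, applies the Douglas bound to the approximating polynomials, passes to the limit in the Taylor coefficients to upgrade \eqref{eq:growth_coef} to \eqref{eq:coef_estimate}, reads the result as an improved type-propagation (real line to $\mathbb{C}$) with constant $\lambda<\Gamma_\beta^{1/\beta}$, and contradicts it with $E_\beta(is)$. Your route can be completed with the same ingredient: apply Borichev's theorem to $\Psi_c$ to get polynomials $P_j\to\Psi_c$ in $L^2(\mathbb{R},\omega_L^{-2})$ (hence with bounded weighted norms), check as in the paper that their coefficients converge to those of $\Psi_c$, and conclude by Fatou's lemma that $\sum_n|f_n^{(j)}|^2B^{2n}n^{2n\beta}\to\infty$, contradicting the Douglas inequality. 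Your closing sentence about extracting an explicit non-interpolable sequence is superfluous: once the Douglas inequality fails, the range inclusion fails and the theorem follows by contradiction.
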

\begin{proof}
Assume by contradiction that this is not true, then fix $\epsilon,B > 0$ and $\beta > 1$ so that 
\[
\opRange \mathcal{B}_\infty^\epsilon \supset \opRange i_\infty^\epsilon,
\]
where 
\[
\mathcal{B}_\infty^\epsilon: C^\infty_{\beta,\Gamma_\beta B-\epsilon} \rightarrow K(n^{n\beta} (\Gamma_\beta B -\epsilon)^n),\quad i_\infty^\epsilon: K(n^{n\beta} B^n) \rightarrow K(n^{n\beta} (\Gamma_\beta B -\epsilon)^n).
\]
Similarly as in the proof of Theorem \ref{theo:mitjagin1}, we deduce that
\begin{equation}\label{eq:range_incl_cette_fois_clabonne}
    \opRange \mathcal{B} \supset \opRange i,
\end{equation}
where 
\[
\mathcal{B}: L^2_{\beta,\Gamma_\beta B - \epsilon/2} \rightarrow \ell^2_{\beta,\Gamma_\beta B},\quad i: \ell^2_{\beta,B} \rightarrow \ell^2_{\beta,\Gamma_\beta B}.
\]
Denote $X = L^2_{\beta,\Gamma_\beta B - \epsilon/2}$, from \eqref{eq:range_incl_cette_fois_clabonne} and Douglas's Lemma we have 
\begin{equation}\label{eq:douglas_quantitative}
    \exists c > 0,\quad \forall a \in \ell^2_{\beta,\Gamma_\beta B},\quad \|\mathcal{B}^*a\|_{X'} \geq c \|i^*a\|_{\ell^2_{\beta B}}.
\end{equation}
Fix $f$ a concentrated functional on $L^2_{\beta B}$ and consider the weight 
\[
\omega(\xi) := \exp\left( \frac{\beta}{e \tilde{B}^{1/\beta}} |\xi|^{1/\beta}\right), \quad \tilde{B} := \Gamma_\beta B - \epsilon/2.
\]
From Proposition \ref{prop:concentrated_func}, the entire function $\Phi$ constructed in Lemma \ref{lem:mitjagin} belongs to $L^2(\mathbb{R},\omega^{-2}(\xi)d\xi)$, and it has order $\leq 1/\beta$ on the complex plane. From \cite[Theorem 3.1]{Borichev}, there exists a sequence $(P_j)$ of polynomials such that $P_j \rightarrow \Phi$ for the $L^2(\mathbb{R},\omega^{-2}(\xi)d\xi)$-norm. For every $j$, we put 
\[
P_j(s) =: \sum_{n=0}^\infty f_n^j s^n,\quad \hat{P}_j := \sum_{n=0}^\infty f_n^j (-i)^n \delta^{(n)},\quad a_n^j := n^{2n\beta}(\Gamma_\beta B)^{2n}f_n^j(-i)^n,
\]
so that $\hat{P}_j = \mathcal{B}^*a^j$. We apply \eqref{eq:douglas_quantitative} to the sequence $a^j$, which is finitely supported, to obtain 
\begin{equation}\label{eq:estimate_j}
    \sup_{n \in \mathbb{N}} n^{n\beta}B^n|f_n^j| \leq \sqrt{\sum_{n=0}^\infty |n^{n\beta}B^nf_n^j|^2} = \| i^* a^j \|_{\ell^2_{\beta B}} \leq \frac{1}{c} \| \mathcal{B}^*a^j \|_{X'} \lesssim \| P_j \|_{L^2(\mathbb{R},\omega^{-2}(\xi)d\xi)}.
\end{equation}
Because $(P_j)$ is a convergent sequence of $L^2(\mathbb{R},\omega^{-2}(\xi)d\xi)$, right-hand side above is bounded with respect to $j$. Because $P_j$ converges to $\Phi$ for the $L^2(\mathbb{R},\omega^{-2}(\xi)d\xi)$-norm we deduce that $\hat{P}_j$ converges to $f$ for the $X'$-norm, and in particular we have 
\[
\forall n \in \mathbb{N},\quad f_n^j \xrightarrow[j \rightarrow \infty]{} f_n.
\]
Passing to the limit in \eqref{eq:estimate_j} we deduce that $(f_n)$ satisfies 
\begin{equation}\label{eq:coef_estimate}
    \exists c > 0,\quad \forall n \in \mathbb{N},\quad |f_n| \leq c B^nn^{-n\beta},
\end{equation}
an improvement over \eqref{eq:growth_coef} from Lemma \ref{lem:mitjagin}.

Now consider an arbitrary entire function 
\[
\Psi(z) = \sum_{n=0}^\infty a_n z^n,
\]
of order $<1$ on $\mathbb{C}$, satisfying 
\begin{equation}\label{eq:growth_Phi_real1}
\exists C> 0,\quad \forall \xi \in \mathbb{R},\quad |\Psi(\xi)| \leq C \exp\left( \frac{\beta}{e A^{1/\beta}}  |\xi|^{1/\beta} \right),
\end{equation}
for some $A > \Gamma_\beta B - \epsilon/2$. From such $\Psi$ we construct a concentrated functional $g$ on $L^2_{\beta A}$ as in Proposition \ref{prop:concentrated_func}. From the previously established fact, the coefficients $(a_n)$ of $\Psi$ satisfy the estimate \eqref{eq:coef_estimate}. In particular, we have 
\[
\exists C > 0,\quad \forall z \in \mathbb{C},\quad |\Psi(z)| \leq C \exp \left( \frac{\beta}{e B^{1/\beta}} |z|^{1/\beta} \right).
\]
Put now $A := \Gamma_\beta B - \epsilon/4 > \Gamma_\beta B - \epsilon/2$, so that we have established the following fact: any function $\Psi$ that is entire and of order $<1$ on $\mathbb{C}$, that has order $\leq 1/\beta$ and type no greater than
\[
\sigma:= \frac{\beta}{e(\Gamma_\beta B - \epsilon/4)^{1/\beta}},
\]
on the real line, is in fact of order $\leq 1/\beta$ and type no greater than
\[
\sigma':= \frac{\beta}{e B^{1/\beta}},
\]
on the whole complex plane. Observe that 
\[
\lambda:= \frac{\sigma'}{\sigma} = \left( \Gamma_\beta - \frac{\epsilon}{4 B} \right)^{1/\beta} < \Gamma_\beta^{1/\beta}.
\]
By a scaling argument we find that: any function $\Psi$ that is entire and of order $<1$ on $\mathbb{C}$, and that has order $\leq 1/\beta$ and type $\leq K$ on the real line, is of order $\leq 1/\beta$ and type $\leq \lambda K$ on $\mathbb{C}$. To arrive to a contradiction we consider the Mittag-Leffler function 
\[
E_\beta(z) = \sum_{k=0}^\infty \frac{z^k}{\Gamma(\beta k + 1)},
\]
which is entire on $\mathbb{C}$. From Stirling's formula it is clear that $E_\beta$ has order $1/\beta$ and type 1 on $\mathbb{C}$ (see \cite[\S 2.2]{boas}). Moreover, the function $E_\beta$ has the asymptotics
\[
|E_\beta(iy)| \sim \frac{1}{\beta} \exp\left( \cos\left( \frac{\pi}{2 \beta} \right) |y|^{1/\beta} \right),\quad \mathbb{R} \ni y \rightarrow \pm \infty,
\]
see \cite[Proposition 3.6]{mittag_leffler}, hence it has type $\cos( \pi / 2\beta)$ on the imaginary axis. Taking $\Psi(s) = E_\beta(is)$ and invoking the previously deduced fact we obtain: $E_\beta$ has order $1/\beta$ and type $\lambda \cos(\pi / 2\beta)$ on $\mathbb{C}$. Because 
\[
\lambda \cos\left( \frac{\pi}{2 \beta} \right) < \Gamma_\beta^\beta \cos\left( \frac{\pi}{2 \beta} \right) = 1,
\]
we arrive to a contradiction, as $E_\beta$ has type $1$ on $\mathbb{C}$. 
\end{proof}

\section{Proof of the Lemma on the discrete version of the Laplace method}\label{sec:appB}
This section is devoted to the proof of Lemma \ref{lem:laplace_discrete}. The proof involves several computations so we divide it into several steps to ease the reading. 

\underline{Step 0:} There is no loss of generality in assuming that $u(x_0) = 0$, and thus we aim  to show that 
\[
\sum_{k=0}^n e^{-nu(k/n)} \sim \sqrt{\frac{2 \pi n}{u''(x_0)}},\quad n \rightarrow \infty. 
\]

\underline{Step 1:} For every $a > 0$ and $b \in \mathbb{R}$, we claim that 
\begin{equation}\label{flp1}
\sum_{k=-\infty}^{+\infty} \exp\left[ -n \frac{a}{2}\left( \frac{k}{n} - b\right)^2\right] = \sqrt{\frac{2n\pi}{a}} \left[ 1 + O \left( e^{- \frac{2n \pi^2}{a}}\right)\right],
\end{equation}
where the $O$ term is uniform with respect to the parameters $n \geq 1$, $b \in \mathbb{R}$, and $a > \underline{a}$ where $\underline{a} > 0$ is any given positive number. To show this, we follow the proof of \cite[Lemma 2]{laplace_discrete}. We write 
\[
\sum_{k=-\infty}^{+\infty} \exp\left[ -n \frac{a}{2}\left( \frac{k}{n} - b\right)^2\right] = e^{-n \frac{ab^2}{2}} \sum_{k=-\infty}^{+\infty} e^{-\frac{a}{2n}k^2}e^{abk} = e^{-n \frac{ab^2}{2}} \Theta \left( \frac{ab}{2i\pi} , - \frac{a}{2in\pi}\right),
\]
where $\Theta$ is Jacobi's theta function, defined by
\[
\Theta(z,\tau) = \sum_{k=-\infty}^{+\infty} e^{i\pi k^2\tau} e^{2i\pi k z}, \quad z \in \mathbb{C},\quad \opIm \tau > 0.
\]
The functional equation for $\Theta$ gives 
\[
\Theta\left( z , -\frac{1}{\tau}\right) = \sqrt{\frac{\tau}{i}} e^{i\pi \tau z^2} \Theta(z\tau,\tau),
\]
see \cite[Theorem 7.1]{mumford}. Applying this formula with $\tau = 2in\pi /a$ and $z = ab/2i\pi$ yields 
\[
    e^{-n \frac{ab^2}{2}} \Theta\left( \frac{ab}{2i\pi} , - \frac{a}{2in\pi}\right) = e^{-n \frac{ab^2}{2}} \Theta \left( z , - \frac{1}{\tau} \right) = \sqrt{\frac{2n\pi}{a}} \sum_{k=-\infty}^{+\infty} e^{-\frac{2 \pi^2n}{a}k^2} e^{2i\pi kbn}.
\]
In the series of the right-hand side, the term $k = 0$ is dominant:
\[
\left| \sum_{k \neq 0} e^{-\frac{2 \pi^2n}{a}k^2} e^{2i\pi kbn} \right| \leq 2 \sum_{k=1}^\infty e^{-\frac{2 \pi^2n}{a}k^2} \leq C e^{- \frac{2 \pi^2n}{a}},
\]
for some constant $C >0$ independent of $n \geq 1$ and $a > \underline{a}$. This shows the claim.

\underline{Step 2:} We fix $\delta > 0$, $n \geq 1$ and we put 
\[
m(\delta) = \inf_{|x-x_0| \geq \delta} u(x),
\]
which is positive ($u$ has the unique global minimum 0). We deduce that 
\begin{equation}\label{flp4}
\sum_{\underset{\left| \frac{k}{n}-x_0\right| \geq  \delta}{k=0}}^n e^{-nu(k/n)}= O\left( n e^{-nm(\delta)}\right).
\end{equation}
We then deduce, thanks to \eqref{flp4}, that
\begin{equation}\label{flp3}
\sum_{k=0}^n e^{-nu(k/n)} = \sum_{\underset{\left| \frac{k}{n}-x_0\right| < \delta}{k=0}}^n e^{-nu(k/n)} + \sum_{\underset{\left| \frac{k}{n}-x_0\right| \geq  \delta}{k=0}}^n e^{-nu(k/n)} =  \sum_{\underset{\left| \frac{k}{n}-x_0\right| < \delta}{k=0}}^n e^{-nu(k/n)} + O\left( n e^{-nm(\delta)}\right),
\end{equation}
and estimate the sum appearing on the right-hand side. We estimate it from above, and we will estimate it from below in the next step. We put 
\[
\ell(\delta) = \inf_{|x-x_0| < \delta} u''(x),
\]
which is well defined and positive for small enough $\delta$, what will be assumed henceforth, and which converges to $u''(x_0)$ as $\delta \rightarrow 0^+$. This allows
\[
|x-x_0| < \delta \Longrightarrow u(x) \geq \frac{\ell(\delta)}{2}(x-x_0)^2,
\]
hence, by \eqref{flp1}, 
\begin{equation}\label{flp5}
\sum_{\underset{\left| \frac{k}{n}-x_0\right| < \delta}{k=0}}^n e^{-nu(k/n)} \leq \sum_{\underset{\left| \frac{k}{n}-x_0\right| < \delta}{k=0}}^n e^{-n\frac{\ell(\delta)}{2}\left( \frac{k}{n} - x_0\right)^2} \leq \sum_{k=-\infty}^{+\infty} e^{-n\frac{\ell(\delta)}{2}\left( \frac{k}{n} - x_0\right)^2} = \sqrt{\frac{2n\pi}{\ell(\delta)}} \left[ 1 + O \left( e^{- \frac{2n \pi^2}{\ell(\delta)}}\right)\right],
\end{equation}
where the $O$ term is uniform in $n \geq 1$ and $\delta > 0$ small enough. Thus, for $\delta > 0$ fixed and small enough, we have 
\[
\limsup_{n \rightarrow \infty} \sqrt{\frac{u''(x_0)}{2n\pi}}\sum_{k=0}^n e^{-nu(k/n)} \leq \sqrt{\frac{u''(x_0)}{\ell(\delta)}}.
\]
Letting $\delta \rightarrow 0^+$ in the right-hand side, we find that
\begin{equation}\label{flp2}
\limsup_{n \rightarrow \infty} \sqrt{\frac{u''(x_0)}{2n\pi}}\sum_{k=0}^n e^{-nu(k/n)} \leq 1.
\end{equation}

\underline{Step 3:} The estimate from below is slightly more tedious. We put, for $\delta > 0$ small enough so that $[x_0-\delta,x_0+\delta] \subset (0,1)$,
\[
L(\delta) = \sup_{|x-x_0| < \delta} u''(x) > 0,
\]
so that by \eqref{flp1},
\begin{align*}
    \sum_{\underset{\left| \frac{k}{n}-x_0\right| < \delta}{k=0}}^n e^{-nu(k/n)} & \geq \sum_{\underset{\left| \frac{k}{n}-x_0\right| < \delta}{k=0}}^n e^{-n\frac{L(\delta)}{2}\left( \frac{k}{n} - x_0\right)^2} = \left\lbrace \sum_{k=-\infty}^{+\infty} - \sum_{k=n+1}^\infty - \sum_{k=-\infty}^{-1} - \sum_{\underset{\left| \frac{k}{n}-x_0\right| \geq \delta}{k=0}}^n \right\rbrace e^{-n\frac{L(\delta)}{2}\left( \frac{k}{n} - x_0\right)^2} \\
    &= \sqrt{\frac{2n\pi}{L(\delta)}} \left[ 1 + O \left( e^{- \frac{2n \pi^2}{L(\delta)}}\right)\right] - \left\lbrace  \sum_{k=n+1}^\infty +\sum_{k=-\infty}^{-1} \right\rbrace e^{-n\frac{L(\delta)}{2}\left( \frac{k}{n} - x_0\right)^2} \\
    &\quad  + O\left(ne^{-n \frac{L(\delta)}{2}\delta^2 }\right). \numberthis{\label{eq:estimate_below}}
\end{align*}
We are thus left to estimate the remaining two series in the right-hand side of \eqref{eq:estimate_below}. Until the end of the proof, $\kappa$ is any positive number which depends on $\delta$ and $x_0$, changing line to line, small enough so that the written asymptotics are valid. To bound above the first series we use a comparison between series and integrals:
\[
    \sum_{k=n+1}^\infty e^{-n\frac{L(\delta)}{2}\left( \frac{k}{n} - x_0\right)^2} \leq \int_n^\infty e^{-\kappa n\left( \frac{x}{n} - x_0\right)^2} dx = \sqrt{\frac{n}{\kappa}} \int_{(1-x_0) \sqrt{n\kappa}}^\infty e^{-x^2}dx = O( e^{-\kappa n}).
\]
For the other series we compute 
\[
\sum_{k=-\infty}^{-1} e^{-n\frac{L(\delta)}{2}\left( \frac{k}{n} - x_0\right)^2} \leq \sum_{k=1}^\infty \exp\left[ - \kappa \left( \frac{k^2}{n} + 2kx_0 + x_0^2n\right)\right] \leq e^{-\kappa n} \sum_{k=1}^\infty e^{-\kappa k} = O(e^{-\kappa n}).
\]
Coming back to \eqref{eq:estimate_below} we find 
\[
\sum_{\underset{\left| \frac{k}{n}-x_0\right| < \delta}{k=0}}^n e^{-nu(k/n)} \geq \sqrt{\frac{2n\pi}{L(\delta)}} \left[ 1 + O( e^{- \kappa n})\right] + O(e^{-\kappa n}) \geq \sqrt{\frac{2n\pi}{L(\delta)}}+ O(e^{-\kappa n}),
\]
and we conclude, similarly as in the previous step, that 
$$
\liminf_{n \rightarrow \infty} \sqrt{\frac{u''(x_0)}{2n\pi}}\sum_{k=0}^n e^{-nu(k/n)} \geq 1,
$$
which, together with \eqref{flp2}, concludes the proof of the Lemma.
\printbibliography[heading = bibintoc]
\end{document}